\newtheorem{theo}{Theorem}[section]
\newtheorem{de}[theo]{Definition}
\newtheorem{prop}[theo]{Proposition}
\newtheorem{lem}[theo]{Lemma}
\newtheorem{cor}[theo]{Corollary}
\title{On embedding of repetitive Meyer multiple sets into model multiple sets}
\date{\today}
\author{Jean-Baptiste Aujogue}
\begin{document}

\maketitle

\begin{abstract} Model sets are always Meyer sets but the converse is generally not true. In this work we show that for a repetitive Meyer multiple sets of $\mathbb{R}^d$ with associated dynamical system $(\mathbb{X}, \mathbb{R}^d)$, the property of being a model multiple set is equivalent for $(\mathbb{X}, \mathbb{R}^d)$ to be almost automorphic. We deduce this by showing that a repetitive Meyer multiple set can always be embedded into a repetitive model multiple set having a smaller group of topological eigenvalues.
\end{abstract}

\section*{Outline}

In this paper we address a study of particular point patterns of an Euclidean space. From a general point of view, a point pattern is a collection of points inside some space $\mathbb{R}^d$ (or some locally compact Abelian group), which obeys some discreteness and relative density properties. In this work we will be turned onto the slight generalization of \textit{multiple point patterns}, that is, a finite collection of point patterns which may overlap. Thanks to this minor generalization our results become true for symbolic sequences and arrays as well. We precise that the concept of point patterns can be much further generalized, by considering for instance weighted Dirac combs as done in \cite{BaaMo}, or the even more general concept of translation-bounded measures \cite{BaaLe}.

%The topic of aperiodic patterns has a rather long story, turned onto the study of aperiodic sequences of letters, non-periodic tilings of the plane, as well as sets of points inside an Euclidian space. The discovery in the early eighties of physical materials, now commonly named \textit{quasicrystals}, stimulated the study of aperiodic structures and led to a unified treatment. The great specificity of quasicrystals is that, despites they are not fully periodic, they show up a diffraction pattern consisting of sharp bright peaks, laying down a belief that a diffraction spectra of that nature would shows up only for \textit{crystals}, which are fully periodic patterns. The first examples were very exiting, especially due to the fact that they may be described, as a point set of the $3$-dimensionnal space, as some projection of points lying within a fully periodic pattern inside a $6$-dimensionnal Euclidean space. Latter on, much efforts have been turned onto the particular class of point set having this kind of property, that is, the class of \textit{model sets}.\\

% a formalism which contains, in particular, any point pattern $\Lambda$ as well as any of its convolution product $\Lambda * \phi$ with some 'bump' compactly supported function, a quantity which is an absolutely continuous translation-bounded measure rather than a discrete measure. In this paper our wish is only to consider either point patterns and symbolic sequences and arrays, and for that reason we restisct on the most simple formalism englobing these two classes, namely the multiple point patterns.

\vspace{0.2cm}
In the topic of point patterns, probably the most understood and studied objects are the so-called \textit{model sets}, or model multiple set according to our point of view. Formally, a model multiple set is a point set of an Euclidean space that arises as follows: we consider a 'total space' to be the product of $\mathbb{R}^d$ (the ambient space) with a locally compact Abelian group $H$ (the internal group), together with a lattice $\tilde{\Gamma }$ in this product. A model multiple set is then a finite family $\left( \Lambda _i\right) _{i\in I}$ of patterns, each $\Lambda _i$ being obtained as projections on $\mathbb{R}^d$ of the points of the lattice whose projection in $H$ falls into some compact and topologically regular subset $ W_i $ of $H$. The data of the group $H$ together with the lattice $\tilde{\Gamma }$ is called a cut $\& $ project scheme and the family $\left\lbrace W_i \right\rbrace _{i\in I}$ is called a window. Any \textit{coding of a rotation} is a model multiple set: such arrays may be defined as the model multiple sets associated with cut $\&$ project schemes and window $\left\lbrace W_i \right\rbrace _{i\in I}$ obeying the following three conditions:

\vspace{0.2cm}
(i) The internal space $H$ is compact

(ii) The window $\left\lbrace W_i \right\rbrace _{i\in I}$ covers $H$

(iii) The sets $W_i$ have pairwise disjoint interiors.
\vspace{0.2cm}

It is also often required that the window admits a trivial redundancy subgroup. Famous examples of such arrays are the \textit{Toeplitz sequences} and arrays \cite{Do}, also called elsewhere \textit{limit-periodic point sets} \cite{BaaMoSch}, which are uniquely characterized as being the coding of a rotation over an \textit{odometer} $H$.

%In the tiling context, one require the existence of a finite collection of product sets $\left\lbrace W_i \times T_i\right\rbrace _{i\in I}$, whose union of translates by elements of $\tilde{\Gamma}$ tiles the space $H\times \mathbb{R}^d$. Here the resuling tiling will have $\left\lbrace  T_i\right\rbrace _{i\in I}$ as set of tiles.\\

\vspace{0.2cm}

A great challenge for point patterns and symbolic sequences is to characterize intrinsic properties of the considered pattern in therms of a natural dynamical system attached to it. Such a dynamical system is provided, given a multiple point pattern $\Lambda$, by a compact metric space $\mathbb{X}_\Lambda$ called the \textit{hull} of $\Lambda$, together with a natural action of $\mathbb{R}^d$, the space into which $\Lambda$ fits, by homeomorphisms. It turned out that for symbolic sequences, and later generalized for arrays \cite{Pa} \cite{BeSiYi}, the property to be the coding of a rotation is characterizable in such a way: An array $\Lambda$ in $\mathbb{R}^d$ is the coding of a rotation if and only if its associated dynamical system $(\mathbb{X}_\Lambda , \mathbb{R}^d)$ is \textit{almost automorphic} (An equivalent statement can be set on the subshift $\Xi _\Lambda$ generated by the array, with the group $\mathbb{Z}^d$ acting on it \cite{BeSiYi}). This property deals with a certain factor map 
\begin{align*}
\begin{psmatrix}[colsep=1.5cm,
rowsep=1.2cm]
\pi : \mathbb{X}_\Lambda \; \;  & \; \; \mathbb{X}_{eq}
\psset{arrows=->>,linewidth=0.2pt, labelsep=1pt
,nodesep=0pt}
\ncline{1,1}{1,2}
\end{psmatrix}
\end{align*} where the factor space $\mathbb{X}_{eq}$ is called the \textit{maximal equicontinuous factor} of $(\mathbb{X}_\Lambda , \mathbb{R}^d)$. For any point pattern, as well as for any topologically transitive dynamical system over a compact space with Abelian acting group, it is a compact Abelian group. A dynamical system is called almost automorphic whenever its factor map $\pi$ admits a one-point fiber \cite{Vee}.

% The regularity property of a coding of a rotation is in turns equivalent to have the subset of one-point fibers in $\mathbb{X}_{eq}$ of full Haar measure. 

\vspace{0.2cm}

If one wishes to provide such a characterization form arrays to general multiple point patterns, one has to consider a property, which to our knowledge doesn't admit any counterparts on the dynamical system $(\mathbb{X}_\Lambda , \mathbb{R}^d)$ (see however \cite{KeSa}), namely the so-called \textit{Meyer property}. This property admits several different but equivalent forms (\cite{KeSa},\cite{La},\cite{Mo}), and is satisfied by model multiple sets. Then a powerful theorem of Baake, Lenz and Moody \cite{BaaLeMo}, generalized in the multiple set setting in \cite{LeeMo}, asserts that a Meyer set $\Lambda$ is a regular model set if and only if its associated dynamical system is almost automorphic with the one-point fiber elements of $\mathbb{X}_{eq}$ having full Haar measure. Here the regularity property means that each $W_i$ used to construct the considered model multiple set has a boundary of Haar measure zero in $H$. However, this result gets rid of the class of non necessarily regular model multiple sets, and thus of many interesting examples.

\vspace{0.2cm}

The strategy given in \cite{BaaLeMo} remains on the use of the so-called \textit{autocorrelation hull} $\mathbb{A}_\Lambda$ of a point pattern \cite{MoSt} \cite{Mo2} \cite{BaaMo} \cite{LeeMo}, in a context where it is compact and a factor of the original hull $\mathbb{X}_\Lambda$. This is not necessary satisfied for model sets which are not regular. On one hand regular model sets are all uniquely ergodic, with pure point dynamical spectrum (with respect to the unique ergodic probability measure) and of entropy zero. On the other hand one can find examples of model sets, apprearing as subsets of lattices or equivalently as $0-1$ arrays, with an arbitrary large positive topological entropy \cite{MaPa} (such examples can even be chosen uniquely ergodic \cite{BuKw}). By a result of \cite{BaaLeRi} these examples cannot have a pure point dynamical spectrum, which implies by the characterization of purely diffractive point patterns provided in \cite{BaaMo} that the autocorrelation hull cannot be compact, and certainly not a factor of $\mathbb{X}_\Lambda$. This observation shows that the techniques developped in \cite{BaaLeMo} does not generalize to not necessarily regular model sets.
Still, one has the following:

\vspace{0.3cm}
\textbf{Theorem 1} \textit{A repetitive Meyer multiple set $\Lambda $ of $\mathbb{R}^d$ is a model multiple set if and only if its associated dynamical system $(\mathbb{X}_\Lambda, \mathbb{R}^d)$ is almost automorphic.}
\vspace{0.3cm}

The method developped here greatly differs from the one of \cite{BaaLeMo}. It is in particular no question of autocorrelation, and to some extent isn't related to any measure aspect. The earlier statement will in fact shows up as a consequence of a more general fact concerning the embedding of Meyer multiple sets into model multiple sets. A Meyer set of $\mathbb{R}^d$ is always a subset of some model set \cite{Mo}. We revisit this fact here in the case of repetitive Meyer multiple sets:

% Following \cite{Mo}, given a Meyer set $\Lambda$ and some $\varepsilon >0$, a model set containing $\Lambda$ is provided by
%\begin{align*} \Lambda _{MS(\varepsilon)} := \left\lbrace  \gamma _0 \in \mathbb{R}^d \, \vert \,\sup _{\gamma \in \Lambda}\vert \langle w,\gamma \rangle \vert \leqslant\varepsilon   \text{ for } w\in \mathbb{R}^d \; \Rightarrow \,  \vert \langle w,\gamma _0 \rangle\vert \leqslant \varepsilon \right\rbrace 
%\end{align*}
%In other words $\Lambda _{MS(\varepsilon)}$ is the maximal point pattern having the same $\varepsilon$-almost periods than $\Lambda$, for fixed $\varepsilon >0$, and it is a fact that this is a model set. By a result of Strungaru \cite{}, for small enough $\varepsilon >0$ the common $\varepsilon$-almost periods are \textit{diffraction peaks} for $\Lambda$ and $\Lambda _{MS(\varepsilon )}$, and by the so-called Dworkin argument those patterns consequently share \textit{eigenvalues} \cite{}. By a result of Kellendonk and Sadun \cite{} in case of repetitivity their in fact share \textit{topological eigenvalues} for their respective dynamical systems. Our approach here. The precise statement is as follows:

\vspace{0.3cm}
\textbf{Theorem 2} 
Let $\Lambda _0$ be a repetitive Meyer multiple sets of $\mathbb{R}^d$ with dynamical system $(\mathbb{X}_{\Lambda _0} , \mathbb{R}^d)$ and factor map $\pi : \mathbb{X}_{\Lambda _0} \twoheadrightarrow \mathbb{X}_{eq}$ onto its maximal equicontinuous factor.

$(i)$ There is a cut $\&$ project scheme $(H , \Sigma , \mathbb{R}^d)$ with window $\left\lbrace  W_i \right\rbrace _{i\in I}$ in the internal space, such that $H \times _\Gamma \mathbb{R}^d \simeq \mathbb{X}_{eq}$ as compact Abelian groups. We denote $\mathbb{X}_{MS}$ to be the unique hull of repetitive model multiple sets issuing from this cut $\&$ project scheme and window.

\vspace{0.2cm}
$(ii)$ Any $\Lambda $ in $\mathbb{X}_{\Lambda _0}$ symbol-wise embeds in a repetitive model multiple set $\Delta _\Lambda $ in $\mathbb{X}_{MS}$. Moreover, For residually many $\Delta \in \mathbb{X}_{MS}$ there exist a subset $ C_\Delta $ of $ \mathbb{X}_{\Lambda _0}$ such that
\begin{align*} \Delta _i= \bigcup_{\Lambda \in C_\Delta } \Lambda _i
\end{align*}

$(iii)$ The group of topological eigenvalues $\mathcal{E}(\mathbb{X}_{MS},\mathbb{R}^d)$ is a subgroup of $\mathcal{E}(\mathbb{X}_{\Lambda _0},\mathbb{R}^d)$.

\vspace{0.3cm}

Theorem $1$ is a very direct consequence of theorem $2$, and consist in showing that the dynamical system $(\mathbb{X}_{\Lambda _0} , \mathbb{R}^d)$ is almost automorphic if and only if the hulls $\mathbb{X}_{\Lambda _0}$ and $\mathbb{X}_{MS}$ are the same collection of point patterns. The question whether the dynamical systems $(\mathbb{X}_{MS},\mathbb{R}^d)$ and $(\mathbb{X}_{\Lambda _0},\mathbb{R}^d)$ have the same group of topological eigenvalues can be addressed in terms of the redundancy subgroup $\mathcal{R}$ of the multiple window: This is a compact subgroup of $H$ whose pontryagin dual is precisely equal to the quotient
\begin{align*} \widehat{\mathcal{R}}= \mathcal{E}(\mathbb{X}_{\Lambda _0},\mathbb{R}^d)\diagup \mathcal{E}(\mathbb{X}_{MS},\mathbb{R}^d)
\end{align*}

It is also interesting to note that, given $\Lambda _0$ a repetitive Meyer set, any $\Delta _{\Lambda _0}\in \mathbb{X}_{MS}$ containing $\Lambda _0$ generates a subgroup $\langle \Delta _{\Lambda _0}\rangle$ equal to the subgroup $\langle \Lambda _0\rangle$ generated by $\Lambda _0$. This means that if $\Lambda _0$ is not supported on a lattice in $\mathbb{R}^d$ then $\Delta _{\Lambda _0}$ is not supported in a lattice as well, and in particular cannot be fully periodic.

\vspace{0.4cm}
%Along the lines, the role of eigenfunctions and of the dynamical spectrum of $(\mathbb{X}, \mathbb{R}^d)$ is somehow hidden. The important tool here is the concept of \textit{maximal equicontinuous factor} $\mathbb{X}_{eq}$ for the system, and under nice conditions, which are satisfied in our setting, the property of having eigenfunctions separating at least one element of $\mathbb{X}$ is equivalent to have a factor map $\pi : \mathbb{X}\twoheadrightarrow \mathbb{X}_{eq}$ $1$-to-$1$ at at least one element of $\mathbb{X}$. Dynamical systems where the factor map $\pi $ is $1$-to-$1$ somewhere are called \textit{almost automorphic} in the literature. In the model multiple set setting, this factor is well-known, and given by the \textit{parametrization group} $H\times _{\tilde{\Gamma}}\mathbb{R}$ of the pattern (\cite{BaaHePl},\cite{Sch2}, see also \cite{Ro}).\\

This paper is organized as follows: in first section we make a presentation of point patterns and their associated dynamical systems, and introduce the so-called model sets. We provide in section $2$ a short survey on the necessary notions of topological dynamics which are involved in this article. In section $3$ we present a new topology which we call combinatoric topology, and show that in case of a repetitive Meyer multiple set this topology naturally gives rise to a locally compact Abelian group. In section $4$ we briefly introduce what we call there a subsystem of the hull of a point pattern, and from the results of sections $3$ and $4$ we provide in section $5$ a construction of a cut $\&$ project scheme with multiple window. We provide in section $6$ the statement and the proof of theorem $2$, and we deduce the proof of theorem $1$ in section $7$.

\clearpage

\section{Point patterns and their dynamical systems}

\subsection{Point patterns and the Meyer property}

Let us consider a finite collection $\Lambda = \left( \Lambda _i\right) _{i\in I}$ of point sets in an Euclidean space $\mathbb{R}^d$, whose support is set as the finite union $\underline{\Lambda}:= \bigcup _{i\in I} \Lambda _i$. We say that $\Lambda$ is \textit{uniformly discrete} if each open ball of some radius $r_0>0$ in $\mathbb{R}^d$ contains at most one point of $ \underline{\Lambda }$. We say that $\Lambda$ is \textit{relatively dense} if each open ball of some radius $R_0$ contains at least one point of each subset $\Lambda _i$. We then say that $\Lambda$ is a \textit{Delone} multiple set whenever it is both uniformly discrete and relatively dense in $\mathbb{R}^d$. Such multiple sets will also be refered along these lines as \textit{point patterns}.

\vspace{0.2cm}
An important notion affiliated with a point pattern $\Lambda$ is its \textit{language}, that is, the collection of all 'circular-shaped' patterns appearing at some site of $\Lambda$
\begin{align*} \mathcal{L}_\Lambda := \left\lbrace (\Lambda -\gamma )\cap B(0,R) \; \vert \; \gamma \in \underline{\Lambda} , \; R>0 \right\rbrace 
\end{align*}
A point pattern $\Lambda _0$ is said to have \textit{finite local complexity} whenever it has only a finite number of circular patterns for any fixed radius. The language of a point pattern $\Lambda _0$ can be used to set a collection of other point patterns called its \textit{hull}, which is defined as
\begin{align*} \mathbb{X }_{\Lambda _0}:= \left\lbrace \Lambda  \subset \mathbb{R}^d \, \vert \; \mathcal{L}_{\Lambda }\subseteq \mathcal{L}_{\Lambda _0} \right\rbrace 
\end{align*}
Thus, from its very construction the hull of $\Lambda _0$ is nothing but the collection of point sets whose bounded patterns appears somewhere in $\Lambda _0$. It admits a natural topology sometimes called the \textit{local topology} which arise from metric
\begin{align*} d(\Lambda , \Lambda '):= \inf \left\lbrace \dfrac{1}{1+R} \; \vert \; \exists \vert t \vert , \vert t'\vert < \dfrac{1}{1+R} , \; (\Lambda -t)\cap B(0,R) = (\Lambda '-t')\cap B(0,R) \right\rbrace 
\end{align*}
This metric roughly means that two point sets are close if they agree on a large domain about the origin up to small shifts. Now if we are given a point set $\Lambda$ then any vector $t\in \mathbb{R}^d$ defines a point set $\Lambda .t$ by simply shifting any site of $\Lambda$ by $-t$. Obviously if some point set lies within the hull $\mathbb{X}_{\Lambda _0}$ then any of its translates remains in $\mathbb{X}_{\Lambda _0}$, providing so a jointly continuous action of $\mathbb{R}^d$ on $\mathbb{X}_{\Lambda _0 }$ and thus a dynamical system $(\mathbb{X}_{\Lambda _0}, \mathbb{R}^d)$. It can be observed that the space $\mathbb{X}_{\Lambda _0}$ topologized in the above way is nothing but the completion of the $\mathbb{R}^d$-orbit of $\Lambda _0$ with respect to the metric $d$. In particular the dynamical system $(\mathbb{X}_{\Lambda _0}, \mathbb{R}^d)$ is topologically transitive.

\vspace{0.2cm}

We consider the \textit{canonical transversal} of the hull of $\Lambda _0$ to be the subset
\begin{align}\label{canonical.transversal} \Xi := \left\lbrace \Lambda   \in \mathbb{X}_{\Lambda _0} \, \vert \, 0\in \underline{\Lambda }\right\rbrace 
\end{align}
The terminology comes from the fact that this set intersects each orbit under the $\mathbb{R}^d$-action, and that any radius of uniform discreteness $r_0$ for $\underline{\Lambda _0}$ is such that if $x\in \Xi $ then $x.t \notin \Xi $ for any $t\in B(0,r_0)$. Now using this set we can turn the finite local complexity condition into the topological setting:
\vspace{0.2cm}
\begin{prop} Let $\Lambda$ be a point pattern of $\mathbb{R}^d$ with hull $\mathbb{X}_{\Lambda _0}$. The assertions are equivalent:

(i) $\Lambda _0$ is of finite local complexity.

(ii) The set of differences $\underline{\Lambda}_0 -\underline{\Lambda}_0 := \left\lbrace \gamma -\gamma '\, \vert \gamma ,\gamma '\in \underline{\Lambda}_0 \right\rbrace $ is discrete and closed in $\mathbb{R}^d$.

(iii) The metric space $\mathbb{X}_{\Lambda _0} $ is compact.

(iv) The transversal $\Xi $ with induced topology is compact.
\end{prop}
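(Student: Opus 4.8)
The plan is to split the four conditions into the two combinatorial ones, (i) and (ii), and the two topological ones, (iii) and (iv), prove each pair equivalent internally, and then bridge the two worlds with the implications (i) $\Rightarrow$ (iv) and $\neg$(i) $\Rightarrow \neg$(iv). Throughout I would use two facts inherited from the definition of the hull: every $\Lambda \in \mathbb{X}_{\Lambda_0}$ is again Delone with the same radii $r_0$ and $R_0$, because $\mathcal{L}_\Lambda \subseteq \mathcal{L}_{\Lambda_0}$ encodes uniform discreteness and relative density as local conditions; and uniform discreteness forces each ball $B(0,R)$ to meet $\underline{\Lambda}$ in a uniformly bounded number of points.

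For (i) $\Leftrightarrow$ (ii) I would argue directly. If $\Lambda_0$ has finite local complexity, fix $R$; every difference $\gamma - \gamma'$ with $|\gamma - \gamma'| < R$ is a point of the radius-$R$ pattern $(\Lambda_0 - \gamma') \cap B(0,R) \in \mathcal{L}_{\Lambda_0}$, so $(\underline{\Lambda}_0 - \underline{\Lambda}_0) \cap B(0,R)$ is contained in the union of finitely many finite patterns, hence finite; letting $R \to \infty$ shows the difference set is locally finite, i.e. discrete and closed. Conversely, if $(\underline{\Lambda}_0 - \underline{\Lambda}_0) \cap B(0,R)$ is finite, then any radius-$R$ pattern is an assignment of colors from the finite index set $I$ to a subset of this fixed finite set of displacement vectors, so there are only finitely many, which is (i).

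For (iii) $\Leftrightarrow$ (iv) I would first check that $\Xi$ is closed in $\mathbb{X}_{\Lambda_0}$: if $\Lambda^{(n)} \to \Lambda$ with $0 \in \underline{\Lambda^{(n)}}$, the site at the origin is carried by the defining shifts to a site of $\Lambda$ tending to $0$, and uniform discreteness (sites are $r_0$-separated and $\underline{\Lambda}$ is closed) forces $0 \in \underline{\Lambda}$. Then (iii) $\Rightarrow$ (iv) is immediate, a closed subset of a compact metric space being compact. For (iv) $\Rightarrow$ (iii) I would use relative density: each $\Lambda$ in the hull has a site within $R_0$ of the origin, so $\Lambda.t \in \Xi$ for some $t \in \overline{B(0,R_0)}$, whence $\mathbb{X}_{\Lambda_0}$ is the image of the compact set $\Xi \times \overline{B(0,R_0)}$ under the continuous map $(x,t) \mapsto x.(-t)$ and is therefore compact.

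Finally, to bridge the two sides, I would prove (i) $\Rightarrow$ (iv) by showing $\Xi$ is sequentially compact. Given a sequence in $\Xi$, finite local complexity makes, for each integer $k$, the patterns $\Lambda^{(n)} \cap B(0,k)$ range over a finite set of radius-$k$ elements of $\mathcal{L}_{\Lambda_0}$ (all carrying a site at the origin), so a diagonal extraction yields a subsequence whose restriction to each $B(0,k)$ is eventually constant; these restrictions glue to a limit point set in $\Xi$ to which the subsequence converges, no shift being needed since the origin is a common site. The reverse bridge is the contrapositive $\neg$(i) $\Rightarrow \neg$(iv): if infinitely many radius-$R$ patterns occur, I would realize them by elements of $\Xi$ and argue that, after choosing $\epsilon$ smaller than both $r_0$ and $1/(1+R)$, distinct such patterns stay $\epsilon$-apart, producing a sequence in $\Xi$ with no convergent subsequence. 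The main obstacle is precisely this last separation estimate: one must show that the small translations allowed in the metric $d$ cannot identify two genuinely different origin-patterns, and it is here that uniform discreteness has to be used quantitatively to pin the site at the origin in place and convert a difference of patterns into a lower bound on $d$.
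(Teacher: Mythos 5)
Your proposal is correct, and it in fact supplies what the paper does not: the paper's entire ``proof'' of this proposition is the single remark that finite local complexity is equivalent to precompactness of the transversal $\Xi$, which is precisely your bridge (i) $\Leftrightarrow$ (iv); the peripheral equivalences (i) $\Leftrightarrow$ (ii) and (iii) $\Leftrightarrow$ (iv) that you prove via local finiteness of the difference set, closedness of $\Xi$, and the suspension map $(x,t)\mapsto x.(-t)$ on $\Xi\times\overline{B}(0,R_0)$ are left entirely implicit there. So your decomposition is the natural elaboration of the paper's one-line folklore reduction, not a divergence from it.

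One quantitative remark on the step you rightly single out as the main obstacle. Your announced constant $\varepsilon<\min\left(r_0,\tfrac{1}{1+R}\right)$ is slightly too generous; the pinning argument you describe does work, but it costs a margin of $\varepsilon$ in the radius. Concretely: if $d(\Lambda,\Lambda')<\varepsilon$ for $\Lambda,\Lambda'\in\Xi$, there are $|t|,|t'|<\varepsilon$ with $(\Lambda-t)\cap B(0,S)=(\Lambda'-t')\cap B(0,S)$ for some $S>\tfrac{1}{\varepsilon}-1$. Since $0\in\underline{\Lambda}\cap\underline{\Lambda'}$, both $-t$ and $-t'$ lie in this common patch, so $t-t'\in\underline{\Lambda}$; as $0$ and $t-t'$ both lie in the open ball of radius $r_0$ centered at their midpoint (this is where $\varepsilon\le r_0$ enters), uniform discreteness forces $t=t'$. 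But then the patch agreement only yields $\Lambda\cap B(0,S-\varepsilon)=\Lambda'\cap B(0,S-\varepsilon)$, so to conclude that the two radius-$R$ patterns at the origin coincide you need $\tfrac{1}{\varepsilon}-1-\varepsilon\ge R$, which fails for $\varepsilon$ just below $\tfrac{1}{1+R}$; taking, say, $\varepsilon=\min\left(r_0,\tfrac{1}{R+2}\right)$ repairs it. With this adjustment the separation estimate, and hence your whole argument, is complete.
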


\vspace{0.2cm}
All of this follows from the fact that finite local complexity means exactly the precompacity of the transversal $\Xi$ with respect to the metric $d$. A Delone multiple set $\Lambda $ of finite local complexity is \textit{repetitive} if for all $t_0\in \mathbb{R}^d$ and $R>0$ the set
of $t\in \mathbb{R}^d$ such that $(\Lambda +t)\cap B(t_0,R)\equiv \Lambda \cap B(t_0,R) $ is relatively dense in $\mathbb{R}^d$. This notion is equivalent to the minimality of the dynamical system $(\mathbb{X}_{\Lambda }, \mathbb{R}^d)$.

\vspace{0.2cm}
\begin{de}\label{defmeyer}\cite{La} A Delone multiple set $\Lambda $ admits the Meyer property if the set of differences $\underline{\Lambda} -\underline{\Lambda} $ is uniformly discrete in $\mathbb{R}^d$, or equivalently if there exists a finite set $F$ with $\underline{\Lambda} -\underline{\Lambda} \subset \underline{\Lambda}+F$.
\end{de}

\vspace{0.2cm}
The Meyer property on patterns admits several different, but equivalent, formulations (\cite{La},\cite{Mo}). As the above definition suggests, this property appears as a strengthen version of the finite local complexity property. In fact we have even more:

\vspace{0.2cm}
\begin{prop}\label{prop:meyer}\cite{Mo} If $\Lambda $ is a Meyer multiple set then all finite combinations $\underline{\Lambda} \pm \underline{\Lambda} \pm .. \pm \underline{\Lambda}$ (with any choice of sign) is uniformly discrete.
\end{prop}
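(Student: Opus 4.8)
The plan is to deduce everything from the single hypothesis of Definition \ref{defmeyer}, namely that $L := \underline{\Lambda}$ satisfies $L - L \subseteq L + F$ for some finite $F$ (equivalently that $L - L$ is uniformly discrete). The target is that every sign combination $\epsilon_1 L + \cdots + \epsilon_m L$, with $\epsilon_j \in \{+1,-1\}$, is uniformly discrete. I would prove the sharper statement that each such combination is contained in a set of the form $L + G$ with $G \subseteq \mathbb{R}^d$ finite, and then establish separately that any set of the shape $L + G$ (finite $G$) is uniformly discrete. This splits the work into a purely combinatorial containment step and a single topological lemma.

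First I would record two elementary containments. The difference is immediate from the hypothesis: $L - L \subseteq L + F$. For the sum I would proceed indirectly. Fixing a base point $x_0 \in L$ (which exists since $L$ is relatively dense), for every $\gamma \in L$ one has $x_0 - \gamma \in L - L$, whence $-\gamma = (x_0 - \gamma) - x_0 \in (L - L) - x_0 \subseteq L + (F - x_0)$; thus $-L \subseteq L + F''$ with $F'' := F - x_0$ finite. Feeding this back gives $L + L = L - (-L) \subseteq L - (L + F'') = (L - L) - F'' \subseteq L + (F - F'')$, so the pure sumset is again of the form $L + (\text{finite})$.

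With these in hand I would prove by induction on $m$ that $\epsilon_1 L + \cdots + \epsilon_m L \subseteq L + G_m$ for a finite $G_m$. The base case is $+L = L + \{0\}$ together with $-L \subseteq L + F''$. For the inductive step, if $S \subseteq L + G_m$, then $S + L \subseteq (L + L) + G_m \subseteq L + ((F - F'') + G_m)$ and $S - L \subseteq (L - L) + G_m \subseteq L + (F + G_m)$, both of the required form. It then remains to see that $L + G$ is uniformly discrete for finite $G$. Since the only discreteness available is that of $L - L$, the argument must be routed through it: if two distinct points $\ell + g$ and $\ell' + g'$ of $L + G$ could be arbitrarily close, then, passing to a subsequence on which $g$ and $g'$ are constant (possible as $G$ is finite), the elements $\ell - \ell' \in L - L$ would converge to $g' - g$. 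But a uniformly discrete subset of $\mathbb{R}^d$ is closed and discrete, so such a sequence is eventually constant equal to $g' - g$, forcing the two points to coincide, a contradiction.

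The hard part is the pure sumset $L + L$, and more generally combinations with unequal numbers of $+$ and $-$ signs: the naive idea of pairing each $+L$ with a $-L$ to invoke $L - L \subseteq L + F$ leaves leftover $+L$ factors uncontrolled. The device that unlocks this is the preliminary containment $-L \subseteq L + \text{finite}$, which uses relative density only to select $x_0$ and is otherwise a one-line consequence of the Meyer hypothesis; once $L + L$ is controlled, the induction closes uniformly over all sign patterns. The one remaining subtlety is to remember that the concluding uniform-discreteness of $L + G$ must be argued through the closedness and discreteness of $L - L$, not of $L$ alone, since it is collisions \emph{across} different translates $L + g$ that need to be excluded.
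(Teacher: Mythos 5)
Your proof is correct. Note that the paper itself offers no argument for this proposition --- it is quoted directly from Moody \cite{Mo} --- so there is no internal proof to compare against; what you have produced is a sound, self-contained reconstruction of the standard argument. Your two-stage structure works: first, every signed combination $\epsilon_1 L + \cdots + \epsilon_m L$ is contained in $L + G$ with $G$ finite, by induction, once you have the base-point trick giving $-L \subseteq L + F''$ and hence $L + L \subseteq L + (F - F'')$; second, any set of the form $L + G$ with $G$ finite is uniformly discrete, because a failure of uniform discreteness would (after passing to a subsequence with constant $g, g'$) produce a non-constant convergent sequence $\ell_n - \ell'_n \to g' - g$ inside $L - L$, which is impossible since a uniformly discrete set is closed and every convergent sequence in it is eventually constant. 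You correctly identify the key subtlety: the final step must run through the uniform discreteness of $L - L$ (the Meyer hypothesis), not of $L$ alone, since a finite union of translates of a merely uniformly discrete set need not be uniformly discrete. Your use of both equivalent formulations of the Meyer property (the containment $L - L \subseteq L + F$ and the uniform discreteness of $L - L$) is legitimate, as Definition \ref{defmeyer} asserts their equivalence. One cosmetic remark: to choose $x_0 \in L$ you only need $L \neq \emptyset$, not relative density.
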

\vspace{0.2cm}
\subsection{Model sets} Now we specify our attention to the so-called \textit{Model sets} of $\mathbb{R}^d$. 
\subsubsection{Cut $\&$ project schemes and window}
\begin{de}\label{CPS} A cut $\&$ project scheme is a data $(H, \Sigma, \mathbb{R}^d)$, where $H$ is a locally compact Abelian group, with some diagram of the form
\begin{align*}\begin{psmatrix}[colsep=1.5cm,
rowsep=0cm]
H \; \; & \; \; H\times \mathbb{R}^d \; \; & \; \; \mathbb{R}^d \\
\cup & \cup & \cup \\
\Gamma ^* \; \; & \; \;  \Sigma\; \; & \; \; \Gamma 
\psset{arrows=->>,linewidth=0.2pt, labelsep=1.5pt
,nodesep=0pt}
\ncline{1,2}{1,3}
\psset{arrows=<->,linewidth=0.2pt, labelsep=1.5pt
,nodesep=0pt}
\ncline{3,2}{3,3}
\psset{arrows=<<-,linewidth=0.2pt, labelsep=1.5pt
,nodesep=0pt}
\ncline{1,1}{1,2}
\ncline{3,1}{3,2}
\end{psmatrix}\end{align*}
such that:\\
- $\Sigma$ is a discrete and co-compact subgroup of $H\times \mathbb{R}^d$.\\
- the canonical projection onto $\mathbb{R}^d$ is bijective from $\Sigma$ to its image $\Gamma $.\\
- the image $\Gamma ^*$ of $\Sigma$ under the canonical projection onto $H$ is a dense subgroup of $H$.
\end{de}

\vspace{0.2cm}
There is a well-established formalism associated to a cut $\&$ project scheme: the space $\mathbb{R}^d$ is usually called the \textit{physical space} and $H$ the \textit{internal space}, and the subgroup $\Gamma $ of $\mathbb{R}^d$ is called the \textit{structure group}. Also, the group morphism $\Gamma \longrightarrow H$ which to any $\gamma $ associates $\gamma ^*:= \pi _{H}(\pi _{\mathbb{R}^d}^{-1}(\gamma ))\in \Gamma ^*$ is usually called the \textit{*-map} of the cut $\&$ project scheme, whose graph is precisely the subgroup $\Sigma$.

\vspace{0.2cm}
\begin{de}\label{defwindow} Let $(H, \Sigma, \mathbb{R}^d)$ be a cut $\&$ project scheme. A window is a finite collection $\left\lbrace  W_j \right\rbrace _{j\in I}$ of compact topologically regular subsets of $H$ (a subset of a topological space is topologically regular if it is the closure of its interior), subject to the condition that the subset $NS^H:= H\backslash \left[ \Gamma ^*- \bigcup _{i\in I}\partial W_i \right]$ is non-empty in $H$.
\end{de}
\vspace{0.2cm}
Note that if $\Gamma $ is countable then $NS^H$ is automatically non-empty from the Baire category theorem, but otherwise one has to assume its non-emptiness.

\vspace{0.2cm}
\subsubsection{Model sets} Given a cut $\&$ project scheme $(H, \Sigma, \mathbb{R}^d)$ with window $\left\lbrace  W_i \right\rbrace _{i\in I}$ in $H$, we consider a \textit{model multiple set} to be the point pattern $\left( \mathfrak{P}(W_i)\right)  _{i\in I}$ of $\mathbb{R}^d$ where $$\mathfrak{P}(W_i):= \left\lbrace \gamma \in \Gamma \; \vert \; \gamma ^*\in W_i \right\rbrace $$
We may also consider translates of the resulting point pattern by any vector $t\in \mathbb{R}^d$, or translates of the window $\left\lbrace  W_j \right\rbrace _{j\in I}$ by any element $w\in H$, which in both cases leads to a new point pattern of $\mathbb{R}^d$.

\vspace{0.2cm}
\begin{de}\label{defmodelset} An inter-model multiple set $\Lambda $ associated with a cut $\&$ project scheme $(H, \Sigma, \mathbb{R}^d)$ with a window $\left\lbrace  W_i \right\rbrace _{i\in I}$ in $H$ is a point pattern of $\mathbb{R}^d$ of the form
\begin{align*} \mathfrak{P}( \stackrel{\circ }{W_i}+w) -t \subseteq  \Lambda _i \subseteq \mathfrak{P}(W_i+w) -t
\end{align*}
\end{de}
\vspace{0.2cm}
In the particular case where the window admits boundary sets $ \partial W_i$ of null Haar measure in $H$, any resulting inter-model multiple set is said to be \textit{regular}. We will don't assume this property in what follows. An inter-model multiple set is called \textit{non-singular} or sometimes \textit{generic} when there are equalities
\begin{align}\label{generic.model.set} \mathfrak{P}( \stackrel{\circ }{W_i}+w) -t =  \Lambda _i = \mathfrak{P}(W_i+w) -t
\end{align}
The situation where such equality occurs for a given couple $(w,t)$ clearly only depends on the choice of $w\in H$, and happens precisely when there are no point of $\Gamma ^*$ within any boundary $w+\partial W_i$: In other words equality occurs exactly when $w\in NS^H$. The next statement is folklore in the topic of model sets:

\vspace{0.2cm}
\begin{theo}\label{theo:théoreme.élémentaire.CP} Let $(H,\Sigma,\mathbb{R}^d)$ be a cut $\&$ project scheme with window $\left\lbrace  W_i \right\rbrace _{i\in I}$ in $H$. Consider the full family $\mathbb{X}_{(H,\Sigma,\mathbb{R}^d), \left\lbrace  W_i \right\rbrace _{i\in I}}$ of inter-model multiple sets that emerge from this data. Then:

(i) $\mathbb{X}_{(H,\Sigma,\mathbb{R}^d), \left\lbrace  W_i \right\rbrace _{i\in I}}$ is complete for the metric $d$, $\mathbb{R}^d$-invariant, and all point pattern of this family has the Meyer property.

(ii) The class of non-singular inter-model multiple sets generates a unique hull, denoted $\mathbb{X}_{MS}$, included into $\mathbb{X}_{(H,\Sigma,\mathbb{R}^d),\left\lbrace  W_i \right\rbrace _{i\in I}}$.

(iii) An inter-model multiple set of $\mathbb{X}_{(H,\Sigma,\mathbb{R}^d),\left\lbrace  W_i \right\rbrace _{i\in I}}$ is repetitive if and only if it lies within $\mathbb{X}_{MS}$.
\end{theo}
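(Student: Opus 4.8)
The plan is to organize everything around the explicit parametrization of inter-model sets by the compact torus $\mathbb{T}:=(H\times\mathbb{R}^d)/\Sigma$, which is compact because $\Sigma$ is co-compact. Write $W:=\bigcup_{i\in I}W_i$, a compact subset of $H$, and for $(w,t)\in H\times\mathbb{R}^d$ let $\Lambda(w,t)$ denote any inter-model set with $\mathfrak{P}(\stackrel{\circ}{W_i}+w)-t\subseteq\Lambda(w,t)_i\subseteq\mathfrak{P}(W_i+w)-t$. Two quick computations start things off. First, $\Lambda(w,t).s=\Lambda(w,t+s)$, which gives the $\mathbb{R}^d$-invariance in $(i)$; and from $\mathfrak{P}(W_i+w-\gamma^*)=\mathfrak{P}(W_i+w)-\gamma$ for $\gamma\in\Gamma$ one checks $\Lambda(w+\gamma^*,t+\gamma)=\Lambda(w,t)$, so that $(w,t)\mapsto\Lambda(w,t)$ descends to $\mathbb{T}$. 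Second, for the Meyer property I would bound $\underline{\Lambda}-\underline{\Lambda}\subseteq\mathfrak{P}(W+w)-\mathfrak{P}(W+w)\subseteq\mathfrak{P}(W-W)$; since $\Sigma$ is discrete, its intersection with $(W-W)\times\overline{B(0,1)}$ is finite, which forces $\mathfrak{P}(W-W)$ to be uniformly discrete, hence $\Lambda$ Meyer, relative density of each $\Lambda_i$ coming from the nonempty interior of the topologically regular $W_i$.

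The technical heart of $(i)$ is completeness. After translating each term of a Cauchy sequence by a bounded vector so that it meets the canonical transversal $\Xi$, one may assume it has the form $\Lambda(w_n,0)$ with $w_n\in-W$ compact; passing to a subsequence I would take $w_n\to w$. Since the sequence is Cauchy and the patterns share finite local complexity, it converges to a limit $\Lambda^\infty$ agreeing with $\Lambda(w_n,0)$ on $B(0,R)$ for $n$ large. I would then run two one-sided inclusions: if $\gamma\in\Lambda^\infty_i\cap B(0,R)$ then $\gamma^*\in W_i+w_n$ for large $n$, and closedness of $W_i$ gives $\gamma^*\in W_i+w$; conversely if $\gamma^*\in\stackrel{\circ}{W_i}+w$ then, the set being open, $\gamma^*\in\stackrel{\circ}{W_i}+w_n$ eventually, so $\gamma\in\Lambda(w_n,0)_i$ and hence $\gamma\in\Lambda^\infty_i$. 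Together these sandwich $\Lambda^\infty$ between $\mathfrak{P}(\stackrel{\circ}{W_i}+w)$ and $\mathfrak{P}(W_i+w)$, exhibiting it as $\Lambda(w,0)$ and proving the family closed. The same open/closed squeeze, specialised to $w\in NS^H$ where the two sides coincide, shows that $[w,t]\mapsto\Lambda(w,t)$ is continuous at every non-singular parameter.

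For $(ii)$ I would use that the action $[w,t]\cdot s=[w,t+s]$ on $\mathbb{T}$ is minimal, the orbit of a point being the image of $\{0\}\times\mathbb{R}^d$, which is dense precisely because $\Gamma^*=\pi_H(\Sigma)$ is dense in $H$. Given two non-singular sets $\Lambda(w,t)$ and $\Lambda(w',t')$, minimality yields shifts $s_n$ with $[w,t+s_n]\to[w',t']$ in $\mathbb{T}$; by the continuity established above at the non-singular parameter $w'$, the translates $\Lambda(w,t).s_n$ converge to $\Lambda(w',t')$. Hence every non-singular set lies in the orbit closure of every other, so all of them generate one and the same hull $\mathbb{X}_{MS}$, which by the completeness of $(i)$ is contained in $\mathbb{X}_{(H,\Sigma,\mathbb{R}^d),\{W_i\}}$.

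Finally $(iii)$. For the ``if'' direction I would invoke the classical fact that a non-singular model set is repetitive: any radius-$R$ patch is coded by a finite intersection of translated windows, which by regularity of the $W_i$ and density of $\Gamma^*$ recurs relatively densely; thus $\mathbb{X}_{MS}$, being the orbit closure of such a set, is minimal, and every element of a minimal system is repetitive. For the converse, the pattern determines its parameter, since density of $\Gamma^*$ together with topological regularity of the $W_i$ gives $\overline{(\Lambda_i+t)^*}=W_i+w$; this yields an equivariant map $\tau$ to $\mathbb{T}$, continuous on any minimal subset and injective over $NS^H$. If an inter-model set $\Lambda$ is repetitive, its hull $\mathbb{X}_\Lambda$ is minimal, so $\tau(\mathbb{X}_\Lambda)$ is a nonempty closed invariant subset of the minimal $\mathbb{T}$, hence all of $\mathbb{T}$, and therefore $\mathbb{X}_\Lambda$ meets a non-singular fiber; that fiber point has orbit closure $\mathbb{X}_{MS}$, and minimality of $\mathbb{X}_\Lambda$ forces $\mathbb{X}_\Lambda=\mathbb{X}_{MS}$, whence $\Lambda\in\mathbb{X}_{MS}$. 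The main obstacle, which pervades both the completeness argument and this last dichotomy, is the control of the singular window positions: one must show that the ``wrong'' fillings of $\partial W_i$ are exactly those destroying almost periodicity, equivalently that $\mathbb{X}_{MS}$ is the unique minimal subset of the full family.
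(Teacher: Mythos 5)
The paper never proves this theorem: it is stated as folklore, with the actual arguments residing in \cite{Sch} and \cite{LeeMo}. So your proposal can only be measured against that standard route, and it does follow it: parametrize inter-model sets by the compact group $\mathbb{T}=(H\times\mathbb{R}^d)/\Sigma$, prove an open/closed squeeze under convergence of parameters (which simultaneously yields completeness and continuity of $[w,t]\mapsto\Lambda(w,t)$ at non-singular parameters), derive minimality of the translation flow on $\mathbb{T}$ from density of $\Gamma^*$, and play the minimality of $\mathbb{T}$ against the minimality of hulls for $(ii)$ and $(iii)$. That skeleton is sound, and your completeness argument through the transversal is correct: after recentering, all supports lie in the fixed uniformly discrete set $\mathfrak{P}(W-W)$, so the Meyer property makes the local metric coincide with exact matching there, and the two one-sided inclusions do identify the limit as an inter-model set.

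There is, however, one claim that is genuinely false as stated and it infects your proof of $(iii)$: \emph{the pattern determines its parameter}. The formula $\overline{(\Lambda_i+t)^*}=W_i+w$ is correct, but it only recovers the translated window $W_i+w$, hence $w$ up to the redundancy group $\mathcal{R}=\lbrace r\in H \,\vert\, W_i+r=W_i\ \forall i\in I\rbrace$, which the hypotheses of the theorem do not make trivial (take $H$ compact and $W_i=H$: every $w$ is a valid parameter of the same pattern). Consequently your map $\tau$ into $\mathbb{T}$ is not well defined; it is well defined only into $(H_\mathcal{R}\times\mathbb{R}^d)/\Sigma_\mathcal{R}$, which is precisely why the paper and \cite{LeeMo} mod out redundancies before invoking the parametrization theorem. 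The gap is repairable in two ways. Either pass to $H_\mathcal{R}$ throughout (the quotient flow is still a minimal rotation, and $w\in NS^H$ iff $[w]_\mathcal{R}\in NS^{H_\mathcal{R}}$), or, more economically, note that your closing argument never needs $\tau$ at all: fix \emph{one} valid parameter $(w,t)$ of the repetitive inter-model set $\Lambda$, use minimality of $\mathbb{T}$ to choose $s_n$ with $[w,t+s_n]$ converging to a non-singular $[w',t']$, extract a convergent subsequence $\Lambda.s_n\to\Lambda'$ in the compact hull $\mathbb{X}_\Lambda$, and apply your squeeze to conclude that $\Lambda'$ is the (unique) non-singular model set at $[w',t']$; then minimality of $\mathbb{X}_\Lambda$ and of $\mathbb{X}_{MS}$ forces $\mathbb{X}_\Lambda=\mathbb{X}_{MS}$. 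With that rerouting, and modulo routine glosses (relative density of $\mathfrak{P}(U)$ for open $U$ uses co-compactness of $\Sigma$ and not merely density of $\Gamma^*$; the uniform discreteness bound should involve $(W-W)-(W-W)$ rather than $W-W$; finitely many ``exterior'' points can enter a patch under a small window shift), the proof is complete.
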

\vspace{0.2cm}
In the sequel we will mainly focus on repetitive point patterns, and thereby in the context of inter-model multiple sets on the unique minimal hull arising from a given cut $\&$ project scheme and window. One could equivalently define $\mathbb{X}_{MS}$ to be the hull of any given non-singular model multiple set arising from the data.

\vspace{0.2cm}
\subsubsection{Redundancies} It is possible, given a cut $\&$ project scheme $(H,\Sigma,\mathbb{R}^d)$ with window $\left\lbrace  W_i \right\rbrace _{i\in I}$, to tighten up the data without altering the repetitive model multiple sets which may emerge. We call the \textit{redundancies group} of the multiple window the subgroup of $H$ given by
\begin{align*} \mathcal{R}:= \left\lbrace w\in H \, \vert \, W_i+w=W_i \, \forall \,i\in I \right\rbrace 
\end{align*}
and we say that a multiple window is \textit{irredundant} is its associated redundancies group is trivial. In any cases, $\mathcal{R}$ is a closed subgroup of $H$, and is moreover compact, as any redundancy satisfies $w\in W_i-W_i$, this latter being compact. Thus we may consider the quotient locally compact Abelian group $ H_\mathcal{R}$. Then as argumented in \cite{LeeMo}, the data $(H_\mathcal{R}, \Sigma _\mathcal{R}, \mathbb{R}^d)$ with associated diagram
\begin{align*}\begin{psmatrix}[colsep=1.5cm,
rowsep=0cm]
H_\mathcal{R} \; \;  & \; \; H_\mathcal{R} \times \mathbb{R}^d \; \; & \; \; \mathbb{R}^d \\
\cup & \cup & \cup \\
\left[ \Gamma ^*\right] _\mathcal{R} \; \; & \; \; \Sigma _\mathcal{R} \; \; & \; \; \Gamma 
\psset{arrows=->>,linewidth=0.2pt, labelsep=1.3pt
,nodesep=0pt}
\ncline{1,2}{1,3}
\psset{arrows=<->,linewidth=0.2pt, labelsep=1.3pt
,nodesep=0pt}
\ncline{3,2}{3,3}
\psset{arrows=<<-,linewidth=0.2pt, labelsep=1.3pt
,nodesep=0pt}
\ncline{1,1}{1,2}
\ncline{3,1}{3,2}
\end{psmatrix}\end{align*}
where $\Sigma _\mathcal{R}:= \left\lbrace ([\gamma ^*]_\mathcal{R}, \gamma ) \in H_\mathcal{R} \times \mathbb{R}^d \; \vert \, \gamma \in \Gamma \right\rbrace $, is a cut $\&$ project scheme and $\left\lbrace [W_i]_\mathcal{R}\right\rbrace _{i\in I} $ is an irredondant window in the internal space. Using the equality $W_i +\mathcal{R} = W_i$ for each index $i\in I$, one may remark that any inter-model multiple set of the latter data arise as an inter-model multiple set of the former, but the converse is not true in general. However, it is true for the non-singular model multiple sets: Due to the equality $W_i +\mathcal{R} = W_i$ one has $\partial W_i +\mathcal{R} = \partial W_i$, so that a $w\in H$ lies within $NS^H$ if and only if its class $[w]_\mathcal{R}$ lies within $NS^{H_\mathcal{R}}$. Then for each such element we have
\begin{align*} \mathfrak{P}(W_i +w)-t = \mathfrak{P}([W_i]_\mathcal{R} +[w]_\mathcal{R})-t
\end{align*}
that is, the cut $\&$ project scheme with irredundant window gives rise to the same collection of non-singular model multiple sets. It directly follows that it gives rise to the same hull of repetitive inter-model multiple sets.

\vspace{0.2cm}

\subsubsection{Parametrization map for model sets}
Given a cut $\&$ project scheme $(H,\Sigma,\mathbb{R}^d)$ with window $\mathfrak{W}:= \left\lbrace  W_i \right\rbrace _{i\in I}$, any element $w\in NS^H$ and vector $t\in \mathbb{R}^d$ defines a unique model multiple set according to equalities (\ref{generic.model.set}). If moreover after possibly moding out some subgroup of the internal space the considered window is irredundant, then there is only one possible pair $(w,t)\in H\times \mathbb{R}^d$ yielding a given model set, up to an element of $\Sigma$. Thus one would ask for the existence of some mapping from $H\times _\Sigma \mathbb{R}^d$, the compact Abelian group obtained as the quotient of $H\times \mathbb{R}^d$ by the lattice $\Sigma$, into the space of point patterns $\mathbb{X}_{(H,\Sigma,\mathbb{R}^d), \left\lbrace  W_i \right\rbrace _{i\in I}}$. It is in fact the inverse phenomenon which occurs (we restrict ourselves to the space $\mathbb{X}_{MS}$ of repetitive inter-model multiple sets, see \cite{LeeMo} for a more general version):
%, that is, there do exists a mapping but form the space of point patterns $\mathbb{X}_{(H,\tilde{\Gamma},\mathbb{R}^d), \left\lbrace  W_i \right\rbrace _{i\in I}}$ and taking values into $H\times _\Sigma \mathbb{R}^d$. Such mapping is called the \textit{parametrization map} (\cite{BaaHePl}) of the space of inter-model multiple sets arising from $(H,\tilde{\Gamma},\mathbb{R}^d)$ and $\mathfrak{W}:= \left\lbrace  W_i \right\rbrace _{i\in I}$.

% we consider the \textit{parametrization group} to be the compact Abelian group $H\times _\Gamma \mathbb{R}^d$ obtained as the quotient of $H\times \mathbb{R}^d$ by the lattice $\tilde{\Gamma }$. An element of this group is generically denoted $[w,t]_\Gamma $. It admits an $\mathbb{R}^d$-action \textit{by rotation}, given for any $s\in \mathbb{R}^d$ through $[w,t]_\Gamma .s:= [w,t +s]_\Gamma $. The terminology 'by rotation' comes from the existence of a group morphism
%{\begin{align*} 
%\begin{psmatrix}[colsep=1.5cm, rowsep=0cm] \mathbb{R}^d \; \; & \; \;  H\times _{\Gamma } \mathbb{R}^d \\ t \; \; & \; \; t^\dagger :=[0,t]_\Gamma \psset{arrows=->,linewidth=0.2pt, labelsep=1.5pt ,nodesep=0pt} \ncline{1,1}{1,2} \psset{arrows=|->,linewidth=0.2pt, labelsep=1.5pt ,nodesep=0pt} \ncline{2,1}{2,2} \end{psmatrix}\\end{align*}
%such that the action rewrites as $[w,t]_\Gamma .s = [w,t]_\Gamma +s^\dagger $. We thus have a dynamical system $(H\times _\Gamma \mathbb{R}^d, \mathbb{R}^d)$ (such a system is sometimes called a \textit{Kronecker system}).\\

\vspace{0.2cm}
\begin{theo}\label{theo:parametrization.map}\cite{Sch} \cite{LeeMo} Let $\mathbb{X}_{MS}$ be the hull of repetitive inter-model multiple sets arising from a cut $\&$ project scheme $(H ,\Sigma,\mathbb{R}^d)$ and window $\left\lbrace W_i\right\rbrace _{i\in I}$, which can be supposed irredundant. Then there exists a factor map
\begin{align*}
\begin{psmatrix}[colsep=1.5cm,
rowsep=0cm]
 \mathbb{X}_{MS}\; \; & \; \;  H\times _{\Gamma } \mathbb{R}^d 
\psset{arrows=->>,linewidth=0.2pt, labelsep=1.5pt
,nodesep=0pt}
\ncline{1,1}{1,2}
\end{psmatrix}
\end{align*}
which satisfies $\pi (\Lambda )= [w_\Lambda , t_\Lambda ]_{\Sigma}$ if and only if there are inclusions
\begin{align*}
\mathfrak{P} (\mathring{W_i}+w_\Lambda )-t_\Lambda \subseteq \Lambda_i \subseteq \mathfrak{P} (W_i+w_\Lambda )-t_\Lambda
\end{align*}
Moreover, $\pi$ is injective precisely on the subset of non-singular point patterns of $\mathbb{X}_{MS}$.
\end{theo}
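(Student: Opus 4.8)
The plan is to construct $\pi$ by first defining it on a dense orbit and then extending it by uniform continuity, the whole point being that the cut \& project geometry lets one recover the internal parameter from the physical pattern. Fix once and for all a non-singular model multiple set $\Lambda^0\in\mathbb{X}_{MS}$, associated to a parameter $w_0\in NS^H$ via the equalities (\ref{generic.model.set}) with $t=0$. Since $\mathbb{X}_{MS}$ is the hull of a repetitive pattern it is minimal, so the orbit $\{\Lambda^0.t : t\in\mathbb{R}^d\}$ is dense, and each $\Lambda^0.t$ is again non-singular with $(\Lambda^0.t)_i=\mathfrak{P}(W_i+w_0)-t$. I set $\pi_0(\Lambda^0.t):=[w_0,t]_\Sigma$ in the compact group $T:=H\times_\Sigma\mathbb{R}^d$. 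The first routine check is that $\pi_0$ is well defined: if $\Lambda^0.t=\Lambda^0.t'$ then $s:=t-t'$ is a period of $\Lambda^0$, which by the computation $\mathfrak{P}(W_i+w_0)+s=\mathfrak{P}(W_i+w_0+s^*)$ together with irredundancy forces $s^*\in\mathcal{R}=\{0\}$ and hence $(0,s)\in\Sigma$, whence $[w_0,t]_\Sigma=[w_0,t']_\Sigma$.

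The main step, and the technical heart of the proof, is to show that $\pi_0$ is uniformly continuous from $(\mathbb{X}_{MS},d)$ into $T$. Concretely I would prove the quantitative statement that for every $\varepsilon>0$ there is $R>0$ such that whenever two points of the orbit agree, up to a shift of size $<\varepsilon$, on the ball $B(0,R)$, their images lie within $\varepsilon$ in $T$. The physical-space part is immediate from the small shift; the content is the internal part. Here one uses that $\Gamma^*$ is dense in $H$: the finite sample $\{\gamma^*: |\gamma|\le R\}$ becomes arbitrarily dense in any fixed compact neighbourhood as $R\to\infty$, and since each $W_i$ is topologically regular the data of which of these $\gamma$ belong to $\Lambda^0.t$, that is which $\gamma^*$ lie in $W_i+w_0$, pins the class down to precision $\varepsilon$; non-singularity is what guarantees no ambiguity arises from boundary points. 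Because $T$ is complete and the orbit is dense, $\pi_0$ then extends uniquely to a continuous map $\pi:\mathbb{X}_{MS}\twoheadrightarrow T$. Surjectivity follows since $\pi$ has closed image containing the dense set $\{[w_0,t]_\Sigma\}$, which is dense precisely because $\Gamma^*$ is dense in $H$, this being exactly minimality of the translation $\mathbb{R}^d$-flow on $T$; equivariance holds on the orbit and passes to the closure, so $\pi$ is a factor map.

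It remains to establish the inclusion characterization and the injectivity clause, which now follow by a limiting argument. Given $\Lambda\in\mathbb{X}_{MS}$ with $\pi(\Lambda)=[w,t]_\Sigma$, choose $t_n$ with $\Lambda^0.t_n\to\Lambda$ and $[w_0,t_n]_\Sigma\to[w,t]_\Sigma$. The inclusion $\Lambda_i\subseteq\mathfrak{P}(W_i+w)-t$ comes from the outer (closed) condition passing to the limit: a point of $\Lambda_i$ is, for large $n$, a point of $(\Lambda^0.t_n)_i$, so its starred coordinate lies in the relevant translate of the closed set $W_i$, and the limit lands in $W_i+w$. Dually $\mathfrak{P}(\mathring{W_i}+w)-t\subseteq\Lambda_i$ comes from the inner (open) condition, since membership of $\gamma^*$ in the open set $\mathring{W_i}+w$ is stable under small perturbation of parameters and forces $\gamma$ into $(\Lambda^0.t_n)_i$, hence into $\Lambda_i$. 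Conversely, to see that the inclusions determine the class, suppose some $(w',t')$ also satisfies them; since $\pi(\Lambda)$ itself satisfies the inclusions by the forward direction, $\Lambda$ is sandwiched by two parameter classes, and irredundancy (via the identity $\mathfrak{P}(W_i+w)+s=\mathfrak{P}(W_i+w+s^*)$ used above) forces the two classes to coincide. Finally, if $\Lambda$ is non-singular its parameter lies in $NS^H$, the two inclusions collapse to the equalities (\ref{generic.model.set}), and $\Lambda$ is uniquely determined by $[w,t]_\Sigma$, giving injectivity on the non-singular locus; whereas over a singular class the boundary points $\gamma^*\in w+\partial W_i$ may be distributed among the $\Lambda_i$ in several admissible ways, each sandwiched pattern mapping to the same class, so the fibre is not a single point. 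I expect the uniform-continuity estimate of the second paragraph to be the genuine obstacle, the remaining parts being soft consequences of completeness, density of $\Gamma^*$, and irredundancy.
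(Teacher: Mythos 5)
Since the paper does not prove this theorem at all (it is imported from \cite{Sch} and \cite{LeeMo}), your proposal can only be measured against the standard construction in those references, and your skeleton -- define $\pi$ on the dense orbit of a fixed non-singular pattern, extend by uniform continuity, then obtain the sandwich characterization by a limiting argument -- is indeed that standard route; the well-definedness check, the uniqueness of the sandwich class, and the injectivity over non-singular classes are argued correctly. The problem is that the step carrying all the weight, the uniform-continuity estimate, is asserted rather than proved, and the sketch you give mis-identifies the mechanism. Spelled out: if the $R$-patches of $\Lambda^0$ at $a=t+u$ and $b=t'+u'$ coincide after translation by $v:=b-a$, then $v\in\Gamma$ and for all $\gamma\in\Gamma\cap B(a,R)$ one has $\gamma^*\in W_i+w_0\Leftrightarrow\gamma^*+v^*\in W_i+w_0$; to conclude that $v^*$ is small in $H$ you must (i) first confine $v^*$ to a compact set -- relative density puts a point of $\Lambda^0_i$ in the common patch, whence $v^*\in W_i-W_i$ -- since otherwise no approximation argument can even start in a possibly non-compact $H$; (ii) control the relevant sample $\left\lbrace \gamma^*\,\vert\,\gamma\in\Gamma\cap B(a,R)\right\rbrace$ \emph{uniformly in the location} $a$ (your sample $\left\lbrace \gamma^*\,\vert\,|\gamma|\le R\right\rbrace$ sits at the origin; the reduction is that the sample at $a$ contains $c+\left\lbrace \gamma^*\,\vert\,|\gamma|\le R-R_0\right\rbrace$ with $c$ in the fixed compact $W_i+w_0$); and (iii) run a compactness argument in which any accumulation point $h$ of the displacements $v^*$ satisfies $W_i+h=W_i$ for all $i$, by density of $\Gamma^*$ and topological regularity -- at which point it is \emph{irredundancy}, $\mathcal{R}=\left\lbrace 0\right\rbrace$, and not non-singularity, that forces $h=0$. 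You invoke irredundancy for well-definedness but attribute the disambiguation in the key estimate to non-singularity; without $\mathcal{R}=\left\lbrace 0\right\rbrace$ the estimate is false (a redundancy is a non-small internal displacement invisible to every patch), which is exactly why the theorem assumes the window irredundant.

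The second genuine gap is the final clause. Your argument that the fibre over a singular class is not a singleton -- boundary points ``may be distributed among the $\Lambda_i$ in several admissible ways, each sandwiched pattern mapping to the same class'' -- counts the wrong objects: $\mathbb{X}_{MS}$ contains only the \emph{repetitive} inter-model multiple sets (part $(iii)$ of the paper's preceding theorem on inter-model sets), and a generic redistribution of boundary points produces a sandwiched pattern that is not repetitive, hence not in $\mathbb{X}_{MS}$ and not in the fibre of $\pi$ at all. To prove non-injectivity over a singular class $[w,t]_\Sigma$ you must exhibit two distinct elements of the fibre as \emph{limits of non-singular patterns}, approaching $w$ from two ``sides'' of a boundary point $\gamma^*-w\in\partial W_i$; this is possible because the achievable internal coordinates $w_0-\Gamma^*$ are dense and $\partial W_i=\partial\mathring{W_i}$ by regularity, but it is an argument you have not given. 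A smaller but real point of the same nature: in your limiting proofs of the two inclusions you let ``the starred coordinate pass to the limit'', whereas the $*$-map is in general nowhere continuous; the correct justification is that the pairs $\bigl((x_n+t_n')^*,\,x_n+t_n'\bigr)$ lie in $\Sigma$ intersected with a compact set, hence form a finite set and stabilize. All three defects are repairable, and once repaired your argument becomes the proof of Schlottmann and Lee--Moody; but as written the central estimate and the injectivity clause are not established.
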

\vspace{0.2cm}

By factor map we mean here a continuous, onto and $\mathbb{R}^d$-equivariant map, where on $H\times _{\Sigma} \mathbb{R}^d$ the space $\mathbb{R}^d$ acts through $[w,t]_{\Sigma}.s:= [w,t+s]_{\Sigma}$. The mapping of theorem \ref{theo:parametrization.map} is called the \textit{parametrization map} (\cite{BaaHePl}), and having a window with boundaries of null Haar measure is equivalent to have $\pi $ injective above a full Haar measure subset of $H\times _{\Sigma}\mathbb{R}^d$ (\cite{LeeMo}). As a result we have (see section $2$ for definitions):

\vspace{0.2cm}
\begin{cor}\label{cor:model.set.almost.aut} The dynamical system $(\mathbb{X}_{MS},\mathbb{R}^d)$ associated with a hull of repetitive model multiple sets of $\mathbb{R}^d$ is almost automorphic, with maximal equicontinuous factor given by the compact Abelian group $ H\times _{\Gamma }\mathbb{R}^d$.
\end{cor}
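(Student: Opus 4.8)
The plan is to extract everything from the parametrization map of Theorem~\ref{theo:parametrization.map}. That theorem already supplies a factor map $\pi:\mathbb{X}_{MS}\twoheadrightarrow H\times_\Gamma\mathbb{R}^d$ onto a compact Abelian group, so I would argue in three moves: (a) the base is a \emph{minimal equicontinuous} system; (b) $\pi$ possesses a \emph{one-point fibre}, coming from a non-singular parameter; (c) an almost one-to-one factor onto a minimal equicontinuous system is forced to be the maximal equicontinuous factor, which simultaneously yields the identification $H\times_\Gamma\mathbb{R}^d\simeq\mathbb{X}_{eq}$ and the almost automorphy.

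For (a), the action $[w,t]_\Sigma.s=[w,t+s]_\Sigma$ is translation along the image of the one-parameter subgroup $\{0\}\times\mathbb{R}^d$; translations of a compact group are isometric for an invariant metric, so $(H\times_\Gamma\mathbb{R}^d,\mathbb{R}^d)$ is equicontinuous, and it is minimal either because $\{0\}\times\mathbb{R}^d+\Sigma\supseteq\Gamma^*\times\mathbb{R}^d$ is dense in $H\times\mathbb{R}^d$ (as $\Gamma^*$ is dense in $H$) or simply because it is a factor of the minimal system $(\mathbb{X}_{MS},\mathbb{R}^d)$. For (b), Definition~\ref{defwindow} guarantees $NS^H\neq\emptyset$; fixing $w_0\in NS^H$ and any $t\in\mathbb{R}^d$, Theorem~\ref{theo:parametrization.map} describes $\pi^{-1}([w_0,t]_\Sigma)$ as those $\Lambda$ admitting a representative $(w_\Lambda,t_\Lambda)$ of the class with $\mathfrak{P}(\mathring{W_i}+w_\Lambda)-t_\Lambda\subseteq\Lambda_i\subseteq\mathfrak{P}(W_i+w_\Lambda)-t_\Lambda$. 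Since $NS^H$ is $\Gamma^*$-invariant, every representative has $w_\Lambda\in NS^H$, collapsing the two bounds to the single equality (\ref{generic.model.set}) and pinning down $\Lambda$ uniquely and independently of the representative. Hence the fibre is a singleton.

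For (c), I would invoke the universal property of the maximal equicontinuous factor map $\pi_{eq}:\mathbb{X}_{MS}\twoheadrightarrow\mathbb{X}_{eq}$: since $\pi$ is an equicontinuous factor it factors as $\pi=\phi\circ\pi_{eq}$ for some factor map $\phi:\mathbb{X}_{eq}\twoheadrightarrow H\times_\Gamma\mathbb{R}^d$. As a morphism of minimal rotations, $\phi$ is, after a translation, a surjective continuous homomorphism of compact Abelian groups whose fibres are the cosets of a closed subgroup $K=\ker\phi$, so that $\pi^{-1}(\pi(x))=\pi_{eq}^{-1}\big(\pi_{eq}(x)+K\big)$ for every $x$. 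If $K$ were non-trivial then, $\pi_{eq}$ being onto, each such set would contain at least two points, contradicting the singleton fibre from (b). Therefore $K$ is trivial, $\phi$ is an isomorphism, and $H\times_\Gamma\mathbb{R}^d\simeq\mathbb{X}_{eq}$; the one-point fibre then lives over $\mathbb{X}_{eq}$ itself, which is exactly the definition of almost automorphy.

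The only genuinely non-formal step is (c), namely that an almost one-to-one factor onto a minimal equicontinuous system is automatically the maximal equicontinuous factor; all the ingredients it needs — the universal property of $\mathbb{X}_{eq}$ and the fact that factor maps between compact group rotations are translates of group homomorphisms — belong to the standard topological-dynamics toolkit recalled in section~2, and the kernel computation above is what converts them into the stated conclusion. Steps (a) and (b) are then routine bookkeeping on top of Theorem~\ref{theo:parametrization.map}.
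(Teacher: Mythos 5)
Your proposal is correct and follows essentially the same route the paper intends: the corollary is stated as an immediate consequence of Theorem~\ref{theo:parametrization.map}, the implicit argument being exactly your steps (a)--(c) — the parametrization map is a factor map onto the compact group rotation $H\times_\Gamma\mathbb{R}^d$ which is injective over non-singular parameters (non-empty by Definition~\ref{defwindow}), and an almost one-to-one factor onto a minimal equicontinuous system must be the maximal equicontinuous factor. Your kernel computation in (c) is a correct and clean way of spelling out the standard fact the paper leaves unproved.
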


\section{General aspects of dynamical systems}

In this section we provide a short presentation of the notions of equicontinuity, proximality and of Ellis semigroup for general dynamical systems. A complete exposition on this topic can be found in the book \cite{Au}. Along this section we consider a compact dynamical system $(\mathbb{X},T)$, that is, a compact (Hausdorff) space $\mathbb{X}$ with an action of a group $T$ by homeomorphism. We assume moreover that $T$ is Abelian, and for simplicity that $\mathbb{X}$ is metrizable although results also hold by using uniformities. Denote the collection of homeomorphisms issued from the group action by 
\begin{align*}T^*:= \left\lbrace t^* \in Homeo(\mathbb{X}) \, \vert \, t\in T\right\rbrace 
\end{align*}

\vspace{0.1cm}
\begin{de}\label{defequi} A compact dynamical system $(\mathbb{X},T)$ is equicontinuous if the family $T^*$ is equicontinuous, that is,
\begin{align*} \forall \, \varepsilon \; \; \exists \, \delta \; \text{ such that } d(x,x')<\delta \; \Rightarrow \; d(x.t,x'.t)<\varepsilon \; \forall \; t\in T
\end{align*}
\end{de}

Such dynamical systems are minimal whether there are topologically transitive, and in this case are \textit{Kronecker systems}: From the Ascoli theorem the closure $T_\mathbb{X}$ of the group of $T^*$ in $(Homeo(\mathbb{X}), d_\infty )$, $d_\infty$ the distance of uniform convergence on $\mathbb{X}$, is a compact Abelian group acting on $\mathbb{X}$ freely and transitively by homeomorphism. There is an obvious group morphism
\begin{align}\label{general.group.morphism} \begin{psmatrix}[colsep=1.2cm,
rowsep=0cm]
T \; \;  &\; \; T^* \subseteq T_\mathbb{X} 
\psset{arrows=->,linewidth=0.4pt, labelsep=1pt
,nodesep=0pt}
\ncline{1,1}{1,2}
\end{psmatrix}
\end{align}
such that $T$ acts by composition (right or left) on $T_\mathbb{X}$, which we note additively by $\chi \mapsto \chi +t^*$, yielding a compact dynamical system $(T_\mathbb{X}, T)$. Given a transitive point $\mathfrak{o}\in \mathbb{X}$ one has a homeomorphism $T_\mathbb{X}\simeq \mathbb{X}$, $\chi \mapsto \chi (\mathfrak{o})$, which conjugates the $T$ action. Therefore the space $\mathbb{X}$ admits a compact Abelian group structure with $\mathfrak{o}$ as unit and $T$-action \textit{by rotation}, that is, coming from a group morphism of the form (\ref{general.group.morphism}).

\vspace{0.2cm}
To any compact dynamical system is naturally associated an equicontinuous compact dynamical system:
\vspace{0.2cm}
\begin{theo}\label{theoremfactor} Let $(\mathbb{X},T)$ be a compact dynamical system. There exist a unique closed $T$-invariant equivalence relation $\sim _{eq}$ on $\mathbb{X}$, such that the quotient space $\mathbb{X}_{eq} := \mathbb{X} \diagup _{\sim _{eq} }$ with $T$-action is an equicontinuous flow, which is maximal in the sense that any equicontinuous factor of $(\mathbb{X},T)$ factors through $\mathbb{X}_{eq}$.
\end{theo}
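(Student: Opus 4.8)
The plan is to construct $\sim_{eq}$ from above, as the intersection of all closed invariant equivalence relations with equicontinuous quotient, rather than from below via the regionally proximal relation; this route delivers the stated universal property most directly and, pleasantly, does not even need the abelian hypothesis for the bare existence and maximality claims.

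First I would introduce the family $\mathcal{F}$ of all closed, $T$-invariant equivalence relations $R$ on $\mathbb{X}$ for which the quotient $(\mathbb{X}/R, T)$ is equicontinuous. This family is nonempty, since the full relation $\mathbb{X}\times\mathbb{X}$ lies in it (its quotient is a point, trivially equicontinuous). Moreover every equicontinuous factor $p\colon \mathbb{X}\twoheadrightarrow \mathbb{Y}$ corresponds to its fibre relation $R_p=\{(x,x'): p(x)=p(x')\}$, which is closed because $\mathbb{Y}$ is Hausdorff and $T$-invariant because $p$ is equivariant, with $\mathbb{X}/R_p \cong \mathbb{Y}$; thus the members of $\mathcal{F}$ are exactly the fibre relations of the equicontinuous factors. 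I then set $\sim_{eq}:=\bigcap_{R\in\mathcal{F}} R$, which is again a closed $T$-invariant equivalence relation, and write $\mathbb{X}_{eq}:=\mathbb{X}/\sim_{eq}$.

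The crux is to verify that $\mathbb{X}_{eq}$ is itself equicontinuous, i.e.\ that $\sim_{eq}\in\mathcal{F}$, equivalently that $\mathcal{F}$ is stable under arbitrary intersection and so has a least element. For this I would use the diagonal map $\Delta\colon \mathbb{X}\to \prod_{R\in\mathcal{F}} \mathbb{X}/R$, $\Delta(x)=(q_R(x))_{R}$, with $q_R$ the canonical surjections. By construction $\Delta(x)=\Delta(x')$ precisely when $(x,x')\in R$ for every $R$, that is when $(x,x')\in\sim_{eq}$, so $\Delta$ descends to a continuous injection $\overline{\Delta}\colon \mathbb{X}_{eq}\hookrightarrow \prod_R \mathbb{X}/R$; being a continuous injection from a compact space into a Hausdorff one it is an equivariant homeomorphism onto a closed invariant subset. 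Now the diagonal $T$-action on the product of the equicontinuous systems $(\mathbb{X}/R,T)$ is equicontinuous for the product uniformity — the key routine point, valid because a basic product entourage constrains only finitely many coordinates and equicontinuity is preserved under finite combinations — and the restriction of an equicontinuous family to an invariant subspace stays equicontinuous. Since the uniformity on a compact Hausdorff space is unique, the subspace uniformity on $\overline{\Delta}(\mathbb{X}_{eq})$ is the one induced by its own (metrizable) compact topology; hence $\mathbb{X}_{eq}$ is equicontinuous and $\sim_{eq}\in\mathcal{F}$.

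Maximality and uniqueness then follow formally. Given any equicontinuous factor with fibre relation $R\in\mathcal{F}$, the inclusion $\sim_{eq}\subseteq R$ furnishes a canonical factor map $\mathbb{X}_{eq}\twoheadrightarrow \mathbb{X}/R$, so every equicontinuous factor factors through $\mathbb{X}_{eq}$. For uniqueness, any relation with the same universal property is a least element of $\mathcal{F}$, and a least element is unique (two such quotients factor through one another, forcing the relations to coincide). I expect the only genuine obstacle to be the equicontinuity of the intersection, carried by the product argument above; dealing with a possibly uncountable index set is exactly why the uniform rather than purely metric formulation is convenient, as the paper already permits. I would close by remarking that it is the abelian hypothesis, via the Ellis semigroup discussed below, that lets one identify $\sim_{eq}$ with the regionally proximal relation and endow $\mathbb{X}_{eq}$ with a compact abelian group structure — neither of which is needed for the existence and maximality asserted here.
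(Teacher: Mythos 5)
Your proof is correct. Note, however, that the paper does not actually prove this statement: it records it as an existential result and defers entirely to Auslander's book \cite{Au}, so the only comparison available is with that classical treatment. Your top-down construction --- intersect all fibre relations of equicontinuous factors, then verify that the intersection still has equicontinuous quotient by embedding $\mathbb{X}_{eq}$ equivariantly into the product $\prod_{R\in\mathcal{F}}\mathbb{X}\diagup R$ and using that basic product entourages constrain only finitely many coordinates --- is the standard existence argument, and every step checks out: quotients of a compact Hausdorff space by closed equivalence relations are compact Hausdorff, the continuous bijection $\overline{\Delta}$ from a compact space to a Hausdorff one is a homeomorphism onto a closed invariant set, and the uniqueness of the compatible uniformity on a compact Hausdorff space transports equicontinuity back to $\mathbb{X}_{eq}$. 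Your side remarks are also accurate: neither metrizability nor commutativity of $T$ is needed for existence, maximality and uniqueness, and the paper explicitly permits the uniform-space formulation that the possibly uncountable product forces on you. The one thing worth flagging is what your construction does not give: any concrete description of $\sim_{eq}$. The paper's later arguments never use the intersection characterization; they use the identification of the fibres of $\pi$ with regional proximality classes (Theorem \ref{theo:regional.proximality}, and its strong form for Meyer multiple sets, Proposition \ref{prop:regional.proximality}), which requires minimality and genuinely more work than the formal existence proof. So your argument fully establishes the theorem as stated, but it is complementary to, rather than a substitute for, the regional-proximality description that the rest of the paper operates with.
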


\vspace{0.2cm}
This is an existential result, whose proof can be found in \cite{Au}. We generically denote quotient map under the relation $\sim _{eq}$ by
\begin{align*}\begin{psmatrix}[colsep=1.2cm,
rowsep=0cm]
\pi: \mathbb{X} \; \; & \; \; \mathbb{X}_{eq}
\psset{arrows=->,linewidth=0.4pt, labelsep=1pt
,nodesep=0pt}
\ncline{1,1}{1,2}
\end{psmatrix}
\end{align*}
The relation $\sim _{eq}$ is called the \textit{equicontinuous structure relation} and the space $\mathbb{X}_{eq}$ the \textit{maximal equicontinuous factor} of $(\mathbb{X},T)$. If the dynamical system $(\mathbb{X},T)$ is topologically transitive, with some $x _0\in \mathbb{X}$ having dense $T$-orbit, then so is $(\mathbb{X}_{eq},T)$, which is thus a Kronecker system with unit $\mathfrak{o}=\pi (x_0)$. In this case $\mathbb{X}_{eq}$ is a compact Abelian group related to the spectral analysis of $(\mathbb{X},T)$ (see \cite{BaKe}, \cite{BaaLeMo}).

\vspace{0.2cm}
A (algebraic) character $\omega $ on the group $T$ is called an \textit{topological eigenvalue} for the system $(\mathbb{X},T)$ if there exist a non trivial continuous complex-valued function $\chi _\omega $ on $\mathbb{X}$, called an \textit{eigenfunction} associated with $\omega$, satisfying $\chi _\omega (x.t)=\omega (t)\chi _\omega(x)$ for all $x\in \mathbb{X}$ and all $t\in T$. We remark that if $T$ is endowed with a topology so that the action becomes separately continuous then a topological eigenvalue must be a continuous caracter on $T$. The collection of topological eigenvalues forms an Abelian group which we denote $\mathcal{E}(\mathbb{X},T)$. If we let $x_0$ be a transitive point of $(\mathbb{X},T)$ then each topological eigenvalue admits a unique associated eigenfunction $\chi _\omega $ having constant modulus and such that $\chi _\omega (x_0)=1 $ in the unit circle $ \mathbb{T}$.

\vspace{0.2cm}
\begin{prop}\label{prop:eigenvalue}\cite{BaKe} Let $(\mathbb{X},T)$ be a compact topologically transitive dynamical system with $T$ Abelian. Then
\begin{align*} \pi (x)= \pi (x') \qquad \Longleftrightarrow \qquad \chi _\omega (x)=\chi _\omega (x') \; \forall \; \omega \in \mathcal{E}(\mathbb{X},T)
\end{align*}
\end{prop}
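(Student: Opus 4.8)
The plan is to show that the two equivalence relations on $\mathbb{X}$ — being identified by $\pi$ and being identified by every eigenfunction — coincide, by proving the two inclusions separately. Throughout I would use that $(\mathbb{X},T)$ is transitive with transitive point $x_0$, so that by Theorem \ref{theoremfactor} and the remarks following it $\mathbb{X}_{eq}$ is a compact Abelian group, a Kronecker system with unit $\mathfrak{o}=\pi(x_0)$ whose $T$-action is rotation through a morphism $\tau:T\to\mathbb{X}_{eq}$ with dense image.

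For the implication $\Rightarrow$ I would argue that every eigenfunction factors through $\pi$. Fix $\omega\in\mathcal{E}(\mathbb{X},T)$ and its normalized eigenfunction $\chi_\omega$, of constant modulus $1$ with $\chi_\omega(x_0)=1$. Viewing $\chi_\omega$ as a map $\mathbb{X}\to\mathbb{T}$ and equipping the circle with the $T$-action $z.t:=\omega(t)z$, the identity $\chi_\omega(x.t)=\omega(t)\chi_\omega(x)$ says precisely that $\chi_\omega$ is $T$-equivariant. Its image is the compact set $\chi_\omega(\mathbb{X})=\overline{\{\omega(t):t\in T\}}$, a closed subgroup of $\mathbb{T}$ on which $T$ acts by rotation, hence an equicontinuous (Kronecker) factor of $(\mathbb{X},T)$. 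By the maximality clause of Theorem \ref{theoremfactor} this factor factors through $\mathbb{X}_{eq}$, so $\chi_\omega=g_\omega\circ\pi$ for some continuous $g_\omega:\mathbb{X}_{eq}\to\mathbb{T}$. Thus $\pi(x)=\pi(x')$ forces $\chi_\omega(x)=\chi_\omega(x')$, and since $\omega$ is arbitrary this is the forward direction.

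For the converse $\Leftarrow$ I would prove the contrapositive by Pontryagin duality. Assume $\pi(x)\neq\pi(x')$ in the compact Abelian group $\mathbb{X}_{eq}$; then some continuous character $\hat\chi\in\widehat{\mathbb{X}_{eq}}$ separates them. Put $\chi:=\hat\chi\circ\pi:\mathbb{X}\to\mathbb{T}$; since $\hat\chi$ is a character and $T$ acts by rotation via $\tau$, one computes $\chi(y.t)=\hat\chi(\pi(y)+\tau(t))=\omega(t)\,\chi(y)$ with $\omega:=\hat\chi\circ\tau$ a character of $T$. Hence $\chi$ is a non-trivial eigenfunction of constant modulus $1$ with $\chi(x_0)=\hat\chi(\mathfrak{o})=1$, so by uniqueness of the normalized eigenfunction $\chi=\chi_\omega$. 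As $\chi_\omega(x)=\hat\chi(\pi(x))\neq\hat\chi(\pi(x'))=\chi_\omega(x')$, the eigenfunctions separate $x$ and $x'$, which is exactly the contrapositive of the backward direction.

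The step I expect to carry the real weight is the forward direction, where one must recognize each eigenfunction as an \emph{equicontinuous observable} and invoke the universal property of $\mathbb{X}_{eq}$ from Theorem \ref{theoremfactor}; the delicate points are verifying that $\chi_\omega(\mathbb{X})$ is genuinely a closed subgroup carrying an equicontinuous rotation action (so that it is a legitimate equicontinuous factor), and that transitivity is what endows $\mathbb{X}_{eq}$ with the group structure needed for Pontryagin duality in the converse. Handling the eigenfunctions one at a time, rather than assembling them into a single map into $\prod_{\omega}\mathbb{T}$, conveniently sidesteps any metrizability concern when $\mathcal{E}(\mathbb{X},T)$ is large.
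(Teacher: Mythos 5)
Your proof is correct. Note first that the paper itself offers no argument for this proposition: it is imported wholesale from \cite{BaKe}, so there is no internal proof to compare against. Your two-step argument is the standard one and is consistent with how the paper uses the result. The forward direction (each normalized eigenfunction $\chi_\omega$ maps $\mathbb{X}$ onto the closed subgroup $\overline{\omega(T)}\subseteq\mathbb{T}$, on which $T$ acts by rotations, hence an equicontinuous factor, hence factoring through $\pi$ by the maximality clause of Theorem \ref{theoremfactor}) and the converse (characters of the compact Abelian group $\mathbb{X}_{eq}$ separate points by Pontryagin--Peter--Weyl, and each pullback $\hat\chi\circ\pi$ is a continuous eigenfunction for the eigenvalue $\hat\chi\circ\tau$, normalized since $\hat\chi(\mathfrak{o})=1$) are both complete; the small verifications you flag as delicate (that $\chi_\omega(\mathbb{X})=\overline{\omega(T)}$ is a closed subgroup, via the transitive point $x_0$ and constant modulus; that rotations are equicontinuous) go through without difficulty. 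In fact your two directions together constitute a proof of the sharper statement the paper records immediately after the proposition, namely the identification $\mathcal{E}(\mathbb{X},T)\simeq\widehat{\mathbb{X}_{eq}}$ with $\chi_\omega=\varpi\circ\pi$, $\varpi\in\widehat{\mathbb{X}_{eq}}$; so your argument not only fills the gap left by the citation but also justifies the paper's subsequent remark. Your closing observation, that treating eigenfunctions one at a time avoids assembling a possibly non-metrizable product $\prod_\omega\mathbb{T}$, is a genuine (if minor) advantage over the alternative route that realizes $\mathbb{X}_{eq}$ inside such a product.
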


\vspace{0.2cm}
What this proposition shows is that the group of topological eigenvalues of $(\mathbb{X},T)$ is naturally isomorphic with the Pontryagin dual of the compact Abelian group $\mathbb{X}_{eq}$,
\begin{align*} \widehat{\mathbb{X}_{eq}} \; \simeq \; \mathcal{E}(\mathbb{X},T)
\end{align*}
where a continuous character $\omega $ on $(\mathbb{X}, T)$ has its eigenfunction $\chi _\omega$ writing as $ \varpi \circ \pi$ with $ \varpi \in \widehat{\mathbb{X}_{eq}}$. In the context where $(\mathbb{X}, T)$ is minimal, one can give a fairly different formulation of the equicontinuous structure relation, that is, whether or not two points $x$ and $x'$ are identified under $\pi$:

\vspace{0.2cm}
\begin{theo}\label{theo:regional.proximality}\cite{Au} Let $(\mathbb{X},T)$ be a minimal compact dynamical system. Then $\pi (x)= \pi(x')$ if and only if $x$ and $x'$ are regionally proximal, that is, for any $\varepsilon >0$ there are $x_\varepsilon $, $x_\varepsilon '$ in $\mathbb{X}$ and $t\in T$ such that
\begin{align*} d(x,x_\varepsilon )<\varepsilon \qquad d(x',x'_\varepsilon )<\varepsilon \qquad d(x_\varepsilon .t,x'_\varepsilon .t)<\varepsilon
\end{align*}
\end{theo}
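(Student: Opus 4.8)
The plan is to introduce the \emph{regionally proximal relation}
\[
Q := \{(x,x')\in \mathbb{X}\times\mathbb{X} \mid x \text{ and } x' \text{ are regionally proximal}\},
\]
to prove that $Q$ coincides with the equicontinuous structure relation $\sim_{eq}$, and then to read off the stated equivalence by recalling that $\sim_{eq}$ is exactly the fibre relation of $\pi$. Straight from the definition, $Q$ is reflexive, symmetric and $T$-invariant, and a routine net argument shows it is closed in $\mathbb{X}\times\mathbb{X}$: given $(x_n,x_n')\to(x,x')$ with each pair in $Q$, one selects witnesses at finer and finer scales and passes to the limit. The genuine content is the transitivity of $Q$ together with equicontinuity of the quotient, which I isolate in the third step.

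First I would establish the easy inclusion $Q\subseteq\ {\sim_{eq}}$, which amounts to two observations. The first is that a factor map preserves regional proximality: if $(x,x')\in Q$ with witnesses $x_\varepsilon,x_\varepsilon',t$, then uniform continuity of $\pi$ on the compact space $\mathbb{X}$, together with equivariance $\pi(a.t)=\pi(a).t$, turns these into witnesses for $(\pi(x),\pi(x'))$ downstairs. The second is that an equicontinuous minimal system has no nontrivial regionally proximal pairs: since $(\mathbb{X}_{eq},T)$ is a Kronecker system, the closure of its transition homeomorphisms is a compact abelian group, and averaging a compatible metric over that group yields an invariant metric $\rho$ for which every $t^*$ is an isometry; then $\rho(y_\varepsilon.t,y_\varepsilon'.t)=\rho(y_\varepsilon,y_\varepsilon')$ forces $\rho(y,y')<3\varepsilon$ for all $\varepsilon$, whence $y=y'$. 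Combining the two gives $\pi(x)=\pi(x')$ for every $(x,x')\in Q$.

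The substance of the theorem is the reverse inclusion, and here I would show that $Q$ is an equivalence relation and that the quotient flow $(\mathbb{X}/Q,T)$ is equicontinuous; this is the step I expect to be the main obstacle, and it is precisely where minimality and the hypothesis that $T$ is abelian become indispensable. The natural tool is the Ellis semigroup $E(\mathbb{X})$, the closure of $T^*$ in $\mathbb{X}^{\mathbb{X}}$. Minimality supplies a minimal left ideal and minimal idempotents in $E(\mathbb{X})$, and $Q$ can be described through the action of $E(\mathbb{X})$ on $\mathbb{X}\times\mathbb{X}$. Because $T$ is abelian each $t^*$ is central in $E(\mathbb{X})$ (for $p=\lim t_i^*$ one has $t^*p=pt^*$ by continuity of $t^*$ and commutativity of $T$), and this centrality is what drives the two needed conclusions: that $Q$ is transitive, and that the enveloping semigroup of the induced flow on $\mathbb{X}/Q$ consists of homeomorphisms. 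Since for a minimal flow the latter property is equivalent to equicontinuity — equivalently, the regionally proximal relation of $\mathbb{X}/Q$ collapses to the diagonal — one obtains that $(\mathbb{X}/Q,T)$ is an equicontinuous factor of $(\mathbb{X},T)$. This is the argument carried out in \cite{Au}, and the abelian hypothesis cannot be dropped in general.

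Finally I would invoke the maximality in Theorem \ref{theoremfactor}. As $(\mathbb{X}/Q,T)$ is an equicontinuous factor, the quotient map $\mathbb{X}\to\mathbb{X}/Q$ factors through $\pi$, so the defining relation $Q$ contains $\sim_{eq}$, that is $\sim_{eq}\ \subseteq\ Q$. Together with the inclusion $Q\subseteq\ {\sim_{eq}}$ from the second step this gives $Q=\ {\sim_{eq}}$, and since $\sim_{eq}$ is the relation $\pi(x)=\pi(x')$ we conclude exactly the stated equivalence.
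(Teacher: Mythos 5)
The paper offers no proof of this theorem at all: it is quoted directly from Auslander's book \cite{Au}, which is precisely the reference you defer to for the crux of your own argument, so your proposal is consistent with the paper's treatment rather than divergent from it. Your reconstruction of that argument is correct and standard — the inclusion $Q\subseteq{\sim_{eq}}$ via preservation of regional proximality under factor maps together with the invariant-metric argument on the Kronecker factor $\mathbb{X}_{eq}$ is complete, your identification of the genuinely hard content (that $Q$ is transitive and that $\mathbb{X}/Q$ is equicontinuous, where minimality and commutativity of $T$ enter through the enveloping-semigroup centrality argument, i.e.\ the Ellis--Keynes/Veech/McMahon theorem proved in \cite{Au}) is accurate, and the final appeal to the maximality statement of Theorem \ref{theoremfactor} correctly yields ${\sim_{eq}}\subseteq Q$ — with the sole caveat that this hard step remains, as you yourself acknowledge, a citation rather than a proof, exactly as in the paper.
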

\vspace{0.2cm}
In this article we are particularly interested in a particular type of dynamical systems, namely the \textit{almost automorphic} systems. In the case where $(\mathbb{X},T)$ is minimal this is equivalent to say that the continuous eigenfunctions associated with $\mathcal{E}(\mathbb{X},T)$ separate \textit{at least} one point of $\mathbb{X}$. 

\vspace{0.2cm}
\begin{de}\label{def:al.aut} \cite{Vee} A compact dynamical system $(\mathbb{X},T)$ is almost automorphic if the factor map $\pi$ admits a one-point fiber.
\end{de}

\vspace{0.2cm}
Even if a dynamical system $(\mathbb{X},T)$ is not equicontinuous, one can still consider the completion $T_\mathbb{X}$ of $T^*$, which is an Abelian group of homeomorphisms on $\mathbb{X}$, but by doing so we loose compacity. Instead, one is turned toward a competion of $T^*$ which is compact, but where the group structure is lost: this is called the \textit{Ellis enveloping semigroup} of $(\mathbb{X},T)$.

\vspace{0.2cm}
\begin{de}\label{def:ellis.compact} Let $(\mathbb{X},T)$ be a compact dynamical system, and consider $T^*$ as a subset of $\mathbb{X}^\mathbb{X}$ the product space with product topology, or equivalently the space of all maps from $\mathbb{X}$ into itself with pointwise convergence topology. The Ellis semigroup $E(\mathbb{X},T)$ of this system is the closure of $ T^*$ in $\mathbb{X}^\mathbb{X}$, endowed with composition of maps.
\end{de}

\vspace{0.2cm}
The Ellis semigroup has a well-defined semigroup structure and is compact (Hausdorff), consisting of transformations on $\mathbb{X}$ obtained as pointwise limits of homeomorphisms in $T^*$. It acts on $\mathbb{X}$, here with action written on the right side, meaning that $x.g$ stands for the image of $x$ under the map $g$, and with this convention it is always a \textit{right-topological semigroup}: if a net $g_\lambda $ converges (pointwise) to $g$ then $h.g_\lambda $ converges (pointwise) to $h.g$. Usually the transformations in $E(\mathbb{X},T)$ are neither continuous, nor invertible on $\mathbb{X}$. Ellis semigroup for dynamical systems is still not completely understood and admits a fairly developed literature (\cite{Au}, \cite{Gl0}, \cite{Gl}, \cite{Gl2}, \cite{GlMe}, \cite{GlMeUs}). However we shall use it only as a tool for proving the results of the present article.

\vspace{0.2cm}
We will also need the notion of Ellis semigroup for locally compact dynamical systems $(\mathbb{X},T)$ which we define below. denote by $\hat{\mathbb{X}}$ the one-point compactification of $\mathbb{X}$ (with neigborhood basis at the point at infinity $\infty $ given by the complementary sets of compacts sets in $\mathbb{X}$), endowed with the extended $T$-action by homeomorphism defined by keeping the point at infinity fixed.

\vspace{0.2cm}
\begin{de}\label{def:ellis.loc} The Ellis semigroup of a locally compact dynamical system $(\mathbb{X},T)$ is defined to be $$E(\mathbb{X},T):= E(\hat{\mathbb{X}},T)\cap \mathcal{F}_{\mathbb{X}}$$
where $\mathcal{F}_{\mathbb{X}}$ is the semigroup of mappings in $\hat{X}^{\hat{\mathbb{\mathbb{X}}}}$ which map $\mathbb{X}$ into itself and keep the point at infinity fixed, with topology induced from $\hat{\mathbb{X}}^{\hat{\mathbb{X}}}$.
\end{de}

\vspace{0.2cm}
We can view $E(\mathbb{X},T)$ as a subsemigroup of $\mathbb{X}^\mathbb{X}$ simply by restricting transformations on $\mathbb{X}$, and the latter gives rise to the same topology on $E(\mathbb{X},T)$. Again it is a right-topological semigroup which contains $T^*$ as a dense subgroup. To show the difference between $E(\mathbb{X},T)$ and $E(\widehat{\mathbb{X}},T)$ we show the following:

\vspace{0.2cm}
\begin{prop}\label{prop: application.infini} Suppose that $(\mathbb{X},T)$ is a non-compact locally compact minimal dynamical system, with $T$ Abelian. Then the transformation $\bowtie _\mathbb{X}$ identically equals to $\infty $ on $\hat{\mathbb{X}}$ lies into $E(\hat{\mathbb{X}},T)$ but not in $E(\mathbb{X},T)$.
\end{prop}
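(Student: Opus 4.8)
The statement splits into two claims, of which the second is immediate. The map $\bowtie_{\mathbb{X}}$ sends every point of $\mathbb{X}$ to $\infty$, hence does not map $\mathbb{X}$ into itself and so does not belong to $\mathcal{F}_{\mathbb{X}}$; therefore $\bowtie_{\mathbb{X}} \notin E(\hat{\mathbb{X}},T) \cap \mathcal{F}_{\mathbb{X}} = E(\mathbb{X},T)$. All the work lies in showing that $\bowtie_{\mathbb{X}} \in E(\hat{\mathbb{X}},T)$, that is, that the constant map to $\infty$ is a pointwise limit of the homeomorphisms $t^*$, $t \in T$.

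Since $E(\hat{\mathbb{X}},T)$ is the closure of $T^*$ for the topology of pointwise convergence, a basic neighbourhood of $\bowtie_{\mathbb{X}}$ is prescribed by finitely many points together with neighbourhoods of their (common) image $\infty$. As $\infty$ is fixed by every $t^*$, it suffices to establish the following simultaneous escape property: for every finite subset $\{x_1,\dots,x_n\}$ of $\mathbb{X}$ and every compact $K \subseteq \mathbb{X}$ there is $t \in T$ with $x_j.t \notin K$ for all $j$. Equivalently, letting $T$ act diagonally on the compact product $\hat{\mathbb{X}}^n$, the plan is to show that the fixed point $(\infty,\dots,\infty)$ lies in the orbit closure $\overline{(x_1,\dots,x_n).T}$.

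The strategy is to identify the minimal subsets of these compactified systems. First, because $(\mathbb{X},T)$ is minimal and $\mathbb{X}$ is non-compact (so that $\mathbb{X}$ is dense in $\hat{\mathbb{X}}$), the only closed $T$-invariant subsets of $\hat{\mathbb{X}}$ are $\varnothing$, $\{\infty\}$ and $\hat{\mathbb{X}}$ (any closed invariant $C$ meets $\mathbb{X}$ in a closed invariant set, which by minimality is $\varnothing$ or $\mathbb{X}$); hence $\{\infty\}$ is the unique minimal subset of $(\hat{\mathbb{X}},T)$. Next, viewing each coordinate projection $\hat{\mathbb{X}}^n \to \hat{\mathbb{X}}$ as a factor map and using that the image of a minimal set under a factor map is minimal, any minimal subset $M$ of $(\hat{\mathbb{X}}^n,T)$ must project onto $\{\infty\}$ in every coordinate, forcing $M = \{(\infty,\dots,\infty)\}$. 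Finally, every non-empty closed invariant set in a compact system contains a minimal subset, so $\overline{(x_1,\dots,x_n).T}$ contains $(\infty,\dots,\infty)$; this is precisely the simultaneous escape property, and completes the argument.

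The genuinely non-trivial point, which this argument is designed to overcome, is the \emph{simultaneity}: pushing a single point out of a given compact set is immediate from minimality, since an orbit trapped in a compact set would have compact closure, contradicting minimality together with non-compactness. The several points $x_1,\dots,x_n$ may however lie in distinct orbits, so one cannot move them all by a common $t$ in any naive fashion. Passing to the product system and locating its unique minimal set is what manufactures the common translation; I expect this to be the crux, the remainder being the routine verification that the coordinate projections are factor maps and that minimality is preserved under them.
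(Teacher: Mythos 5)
Your proof is correct, and it reaches the conclusion by a genuinely different route than the paper's. The paper argues inside the Ellis semigroup itself: by Zorn's lemma the compact system $(E(\hat{\mathbb{X}},T),T)$, with $T$ acting by composition, contains a minimal subsystem $M$; for each $x\in\hat{\mathbb{X}}$ the evaluation map $ev_x\colon E(\hat{\mathbb{X}},T)\to\hat{\mathbb{X}}$ is continuous and $T$-equivariant --- this equivariance is precisely where the Abelian hypothesis is invoked, since it makes each $t^*$ commute with every transformation of $E(\hat{\mathbb{X}},T)$ --- so $ev_x$ carries $M$ onto a minimal subset of $\hat{\mathbb{X}}$, which by minimality of $(\mathbb{X},T)$ and non-compactness must be $\left\lbrace \infty \right\rbrace$; hence any $g\in M$ satisfies $x.g=\infty$ for all $x$, i.e.\ $g=\bowtie_{\mathbb{X}}\in E(\hat{\mathbb{X}},T)$. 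You instead unwind the pointwise-convergence topology into a simultaneous escape property and prove that property by identifying $\left\lbrace (\infty,\dots,\infty) \right\rbrace$ as the unique minimal subset of the finite diagonal products $(\hat{\mathbb{X}}^n,T)$. The two mechanisms are cousins --- the evaluation maps on $E(\hat{\mathbb{X}},T)\subseteq\hat{\mathbb{X}}^{\hat{\mathbb{X}}}$ are exactly coordinate projections, so yours is a finitary version of the paper's argument --- but your version never touches the semigroup structure of $E(\hat{\mathbb{X}},T)$ and, more significantly, never uses that $T$ is Abelian: coordinate projections of a diagonal action are equivariant for free, whereas the paper needs commutativity to make the $ev_x$ equivariant under its composition action. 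So your argument establishes the proposition for an arbitrary acting group, at the cost of being somewhat longer; the paper's argument is shorter and yields the small bonus that every minimal $T$-subsystem of $(E(\hat{\mathbb{X}},T),T)$ collapses to the single transformation $\bowtie_{\mathbb{X}}$. Both proofs dispose of the easy half identically: $\bowtie_{\mathbb{X}}$ fails to map $\mathbb{X}$ into itself, hence lies outside $E(\mathbb{X},T)=E(\hat{\mathbb{X}},T)\cap\mathcal{F}_{\mathbb{X}}$.
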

\vspace{0.2cm}
\begin{proof} Let $x$ be some point of $\mathbb{X}$: its $T$-orbit is dense in $\mathbb{X}$, and since this latter is non-compact the point $\infty$ is not isolated, so that the $T$-orbit of $\mathbb{X}$ is in fact dense in $\hat{\mathbb{X}}$. As $\infty $ is invariant under the extended $T$-action on $\hat{\mathbb{X}}$ the only $T$-invariant compact subset of $\hat{\mathbb{X}}$ is the singleton $\left\lbrace \infty \right\rbrace $. From the Zorn lemma, the dynamical system $(E(\hat{\mathbb{X}},T),T)$ with $T$ acting by composition admits some $T$-minimal subsystem $(M,T)$, into which we may choose a transformation $g$. Then for each $x\in \hat{\mathbb{X}}$, the evaluation map $ev_x: E(\hat{\mathbb{X}},T) \longrightarrow \hat{\mathbb{X}}$ is continuous and $T$-equivariant (as $T$ is Abelian, any $t\in T$ comutes with any transformation in $E(\hat{\mathbb{X}},T)$), so maps $M$ onto a minimal subsystem of $\hat{\mathbb{X}}$, which must then be $\left\lbrace \infty \right\rbrace $. It follows that $x.g= \infty $ for each $x\in \hat{\mathbb{X}}$, as desired. It is clear that $\bowtie _\mathbb{X}$ cannot be a transformation in  $E(\mathbb{X},T)$ since it doesn't preserve $\mathbb{X}$ in $\widehat{\mathbb{X}}$, concluding the proof.
\end{proof}

\vspace{0.2cm}
\section{Combinatoric topology on a hull of point patterns}

Given some point pattern $\Lambda $ of $\mathbb{R}^d$, we wish to consider here a topology on its associated hull $\mathbb{X}_{\Lambda }$, which we call \textit{combinatoric topology}, obtained from the metric
\begin{align*} \mathsf{d}(\Lambda ', \Lambda ''):= \inf \left\lbrace \dfrac{1}{1+R} \; \vert \; \Lambda ' \cap B(0,R) = \Lambda ''\cap B(0,R) \right\rbrace 
\end{align*}
This metric is in fact an ultrametric on the hull $\mathbb{X}_{\Lambda }$, setting that two point patterns are close whenever they exactly match on a large domain about the origin in $\mathbb{R}^d$. We refer the space $\mathbf{X}_{\Lambda }$ as the collection $\mathbb{X}_{\Lambda }$ itself, endowed with the above ultrametric, and call it the \textit{combinatoric hull} of $\Lambda $. The action of $\mathbb{R}^d$ by translation site by site on any point patterns yields an action by homeomorphisms on $\mathbf{X}_{\Lambda }$ and thus a dynamical system $(\mathbf{X}_{\Lambda }, \mathbb{R}^d)$. Observe that on the transversal $\Xi $ given in (\ref{canonical.transversal}) the ultrametric $\mathsf{d}$ coincides with the usual metric $d$, and with respect to this new topology $\Xi$ is a \textit{clopen} set, that is, is both closed and open in $\mathbf{X}_{\Lambda }$.

\vspace{0.2cm}
\begin{prop} Let $\Lambda $ be a point pattern of $\mathbb{R}^d$. Then if $\Lambda$ is of finite local complexity then the space $\mathbf{X}_{\Lambda }$ is locally compact. Moreover the dynamical system $(\mathbf{X}_{\Lambda }, \mathbb{R}^d)$ is minimal if and only if $\Lambda$ is repetitive.
\end{prop}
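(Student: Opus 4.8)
The plan is to prove two independent statements: local compactness of $\mathbf{X}_\Lambda$ under finite local complexity, and the equivalence between minimality of $(\mathbf{X}_\Lambda,\mathbb{R}^d)$ and repetitivity of $\Lambda$. I will treat them in sequence, exploiting the ultrametric structure of $\mathsf{d}$ and the observation already recorded in the text that $\mathsf{d}$ restricts to the ordinary metric $d$ on the transversal $\Xi$, where $\Xi$ is clopen.

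For local compactness, I would first argue that $\Xi$ is a compact neighborhood of each of its points in $\mathbf{X}_\Lambda$. By Proposition (iv) in the excerpt, finite local complexity makes $\Xi$ compact for the metric $d$; since $\mathsf{d}=d$ on $\Xi$ and $\Xi$ is clopen in $\mathbf{X}_\Lambda$, the transversal is a compact clopen subset of the combinatoric hull. The key point is then that every point of $\mathbf{X}_\Lambda$ admits a compact neighborhood obtained from $\Xi$ by a small translation. Concretely, for any $\Lambda'\in\mathbf{X}_\Lambda$ pick a site $\gamma\in\underline{\Lambda'}$ close to the origin, so that $\Lambda'.\gamma$ lies in $\Xi$; by joint continuity of the translation action, a small translate $\Xi.(-\gamma)$ of the compact clopen set $\Xi$ is a compact neighborhood of $\Lambda'$. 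Covering $\mathbb{R}^d$ by finitely many such translation windows over a relatively dense set of representatives shows every point has a compact neighborhood, giving local compactness.

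For the equivalence with minimality, I would show both implications through the orbit-closure description. Assume $\Lambda$ is repetitive. Minimality of $(\mathbf{X}_\Lambda,\mathbb{R}^d)$ amounts to showing that the $\mathbb{R}^d$-orbit of every $\Lambda'\in\mathbf{X}_\Lambda$ is dense in the combinatoric topology, i.e.\ that for any target $\Lambda''\in\mathbf{X}_\Lambda$ and any radius $R$ there is $t$ with $(\Lambda'.t)\cap B(0,R)=\Lambda''\cap B(0,R)$. Repetitivity guarantees that the finite pattern $\Lambda''\cap B(0,R)$ recurs relatively densely inside $\Lambda'$ (both share the same language $\mathcal{L}_{\Lambda_0}$), and each such recurrence supplies the needed translate $t$ realizing an \emph{exact} combinatorial match on $B(0,R)$; note this is where I must exploit the ultrametric, since combinatoric closeness demands genuine equality of patterns rather than the up-to-small-shift agreement of the local metric $d$. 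Conversely, if $(\mathbf{X}_\Lambda,\mathbb{R}^d)$ is minimal, then in particular every pattern of $\Lambda$ recurs with bounded gaps, which is exactly repetitivity.

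The main obstacle I anticipate is in the repetitivity-implies-minimality direction, specifically reconciling the combinatoric metric with the up-to-shift nature of repetitivity. Repetitivity as stated guarantees relative density of return vectors $t$ for which $(\Lambda+t)\cap B(t_0,R)\equiv\Lambda\cap B(t_0,R)$, but the combinatoric topology requires exact equality of patterns centered at the \emph{same} origin, with no tolerance for small translations. I would need to verify that the relatively dense set of approximate return vectors can be adjusted to vectors giving exact matching on $B(0,R)$, which follows because finite local complexity forces the sites themselves to lie on a discrete set of positions, so the "up to small shift" freedom collapses to an exact coincidence once $R$ is large relative to the uniform discreteness radius $r_0$. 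Handling this collapse carefully, rather than the surrounding density bookkeeping, is the delicate part of the argument.
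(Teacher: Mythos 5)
The paper itself states this proposition without proof, so your attempt has to be judged against what a correct argument requires. Your overall architecture is the right one: the transversal $\Xi$ is compact (by finite local complexity) and clopen in $\mathbf{X}_{\Lambda}$, its translates give compact neighborhoods, and exact recurrence of patches gives the minimality equivalence. However, two of your justifications invoke properties that are \emph{false} for the combinatoric topology. First, there is no ``joint continuity of the translation action'' on $\mathbf{X}_{\Lambda}$: for a fixed pattern $\Lambda'$ and a small nonzero $t$, uniform discreteness forces $(\Lambda'-t)\cap B(0,R_0+1)\neq \Lambda'\cap B(0,R_0+1)$ (a point of $\Lambda'$ near the origin gets shifted off the discrete closed set $\underline{\Lambda'}-\underline{\Lambda'}$), so $\mathsf{d}(\Lambda'.t,\Lambda')$ is bounded below and the orbit maps $t\mapsto \Lambda'.t$ are not even continuous -- this failure is exactly what distinguishes $\mathbf{X}_{\Lambda}$ from $\mathbb{X}_{\Lambda}$. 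What you actually need, and what is true, is that translation by the \emph{fixed} vector $\gamma$ is a homeomorphism of $\mathbf{X}_{\Lambda}$, so that $\Xi+\gamma$ is a compact clopen set containing $\Lambda'$; with that correction the argument closes, and the sentence about covering $\mathbb{R}^d$ by finitely many translation windows can simply be deleted, since local compactness is a pointwise statement.

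Second, the obstacle you single out as the delicate part of the minimality direction is spurious, and the mechanism you propose to resolve it is wrong. The paper's definition of repetitivity already demands \emph{exact} equality $(\Lambda+t)\cap B(t_0,R)\equiv\Lambda\cap B(t_0,R)$, and membership in the hull is also an exact-patch condition ($\mathcal{L}_{\Lambda'}\subseteq\mathcal{L}_{\Lambda_0}$), so the direct combinatorial argument never meets any ``up to small shift'' tolerance; the only genuine (and minor) point of care is that $\Lambda''\cap B(0,R)$ is centered at the origin, which need not be a site of $\Lambda''$, so one should enlarge to a patch $(\Lambda''-\gamma'')\cap B(0,R+R_0)$ centered at a site $\gamma''\in\underline{\Lambda''}\cap B(0,R_0)$ before invoking recurrence -- a point you do not address. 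Your proposed ``collapse'' via discreteness cannot do the work you assign to it: finite local complexity makes $\underline{\Lambda'}-\underline{\Lambda'}$ discrete for a \emph{single} pattern, but the relative position of two distinct hull elements $\Lambda',\Lambda''$ is completely unconstrained, so approximate coincidences between them do not rigidify; if one does pass through the local metric, the correct repair is simply to absorb the two small shifts into the translation vector $t$. Finally, the converse direction is asserted with ``in particular'': deducing bounded-gap recurrence from minimality is not automatic on the non-compact space $\mathbf{X}_{\Lambda}$. The cleanest fix is to observe that orbits are the same sets in both topologies and density in the finer combinatoric topology implies density in the coarser local one, so minimality of $(\mathbf{X}_{\Lambda},\mathbb{R}^d)$ gives minimality of $(\mathbb{X}_{\Lambda},\mathbb{R}^d)$, and then to invoke the equivalence of that with repetitivity already recorded in Section 1 of the paper; alternatively run the usual compactness argument inside $\Xi$. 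All of these repairs are available, so your proof is salvageable, but as written the false continuity claim, the misdiagnosed ``collapse,'' and the unproved converse are genuine defects.
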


\vspace{0.2cm}
Consider now a point pattern $\Lambda _0$ of finite local complexity with hull $\mathbb{X}_{\Lambda _0}$, and $\mathbb{X}_{eq}$ be the maximal equicontinuous factor of $(\mathbb{X}_{\Lambda _0}, \mathbb{R}^d)$. The combinatoric topology on $\mathbf{X}_{\Lambda _0}$ is naturally finer than the topology of $\mathbb{X}_{\Lambda _0}$, and thus the closed $\mathbb{R}^d$-invariant equivalence relation induced on $\mathbb{X}_{\Lambda _0}$ from the factor map $\pi$ onto $\mathbb{X}_{eq}$ is also closed in $\mathbf{X}_{\Lambda _0}$. One may thus define the space $\mathbf{X}_{eq}$ as the quotient space of $\mathbf{X}_{\Lambda _0}$ under this relation. In other words we have a commuting diagram

\begin{align*} \begin{psmatrix}[colsep=1.5cm,
rowsep=1cm]
\mathbf{X}_{\Lambda _0} \; \;  &  \;  \mathbf{X}_{eq}\\
\mathbb{X}_{\Lambda _0} \; \;  & \;  \mathbb{X}_{eq}&
\psset{arrows=->>,linewidth=0.4pt, labelsep=1pt
,nodesep=0pt}
\ncline{1,1}{1,2}^{\Pi}
\ncline{2,1}{2,2}^{\pi}
\psset{arrows=<->,linewidth=0.4pt, labelsep=0.9pt
,nodesep=0pt}
\ncline {1,1}{2,1}
\ncline {1,2}{2,2}
\end{psmatrix}
\end{align*}
where vertical maps are the identity maps, continuous from top to bottom. The space $\mathbf{X}_{eq}$ admits an induced $\mathbb{R}^d$-action, yielding so a dynamical system $(\mathbf{X}_{eq},\mathbb{R}^d)$. The quotient map, denoted here $ \Pi$, becomes equivariant with respect to the $\mathbb{R}^d$-actions. When $\Lambda _0$ is repetitive the dynamical system is minimal.

\vspace{0.2cm}
Consider the particular element $\mathfrak{o}:= \pi(\Lambda _0)\in \mathbb{X}_{eq}$. The space $\mathbb{X}_{eq}$ is (see section $2$) a compact Abelian group with $\mathfrak{o}$ as unit, and we endow $\mathbf{X}_{eq}$ with this Abelian group structure. We have a (non-continuous) group morphism
\begin{align}\label{group.morphism} \begin{psmatrix}[colsep=1.4cm,
rowsep=0cm]
\mathbb{R}^d \; \; & \; \;  \mathbf{X}_{eq} \\
 \; t \; \; ~ & \; \;  t^*
\psset{arrows=->,linewidth=0.2pt, labelsep=0.8pt
,nodesep=0pt}
\ncline{1,1}{1,2}
 \psset{arrows=|->,linewidth=0.2pt, labelsep=0.7pt
,nodesep=0pt}
\ncline{2,1}{2,2}
\end{psmatrix}
\end{align}
such that the image under the action of a $t\in \mathbb{R}^d$ on an element $\mathfrak{e}\in \mathbf{X}_{eq}$ writes $\mathfrak{e}+ t^*$. The equivariance property of $\Pi$ formulates as $\Pi(\Lambda +t)= \Pi(\Lambda )+t^*$ for any $\Lambda \in \mathbb{X}_{\Lambda _0}$ and any $t\in \mathbb{R}^d$. Then our aim in this section is to show that, under the Meyer and repetitivity assumptions, the topology and the Abelian group structure on $\mathbf{X}_{eq}$ are compatible. Precisely:
\vspace{0.2cm}
\begin{theo}\label{theo:loc.compact} Let $\mathbf{X}_{\Lambda _0}$ be the combinatoric hull of a repetitive Meyer multiple set $\Lambda _0$, and let $\mathbf{X}_{eq}$ be as above. Then $\mathbf{X}_{eq}$ is a locally compact Abelian group.
\end{theo}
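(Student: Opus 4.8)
The plan is to reduce Theorem~\ref{theo:loc.compact} to two assertions about the quotient map $\Pi : \mathbf{X}_{\Lambda _0} \twoheadrightarrow \mathbf{X}_{eq}$: that $\Pi$ is an \emph{open} map, and that the abelian group operations already available on $\mathbf{X}_{eq}$ (inherited from the compact group $\mathbb{X}_{eq}$) are continuous for the combinatoric quotient topology. Granting these, local compactness and Hausdorffness come cheaply. Since $\Lambda _0$ is Meyer it is of finite local complexity, so by the previous proposition $\mathbf{X}_{\Lambda _0}$ is locally compact and the transversal $\Xi$ is compact and clopen; assuming $0\in \underline{\Lambda }_0$ we have $\Lambda _0\in \Xi$, hence $\mathfrak{o}=\Pi(\Lambda _0)\in \Pi(\Xi)$. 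If $\Pi$ is open then $\Pi(\Xi)$ is a \emph{compact open} neighbourhood of $\mathfrak{o}$ (compact as the continuous image of a compact set, open as the image of a clopen set), yielding local compactness; and Hausdorffness follows from $\Pi$ open together with the already-noted fact that $\sim _{eq}$ is closed in $\mathbf{X}_{\Lambda _0}$, since a quotient by a closed equivalence relation along an open projection is Hausdorff.

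The first genuine task is the openness of $\Pi$, which I would phrase as the statement that the $\sim _{eq}$-saturation of every combinatoric cylinder $C_R(\Lambda )=\{\Lambda '\in \mathbb{X}_{\Lambda _0} : \Lambda '\cap B(0,R)=\Lambda \cap B(0,R)\}$ is open in $\mathbf{X}_{\Lambda _0}$. As $(\mathbf{X}_{\Lambda _0},\mathbb{R}^d)$ is minimal, Theorem~\ref{theo:regional.proximality} identifies $\sim _{eq}$ with regional proximality, and the point is to show that, for Meyer sets, regional proximality is controlled by finitely much local data. Here the Meyer property enters decisively through Proposition~\ref{prop:meyer}: the uniform discreteness of all iterated difference sets $\underline{\Lambda }_0\pm \cdots \pm \underline{\Lambda }_0$ forces the $\varepsilon$-almost coincidences defining regional proximality to be realised, at fixed window size, by one of only finitely many exact local configurations. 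This uniform finiteness is what I expect to let me prove that the saturation of a cylinder is again a union of cylinders, hence clopen, giving openness of $\Pi$.

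With $\Pi$ open, a neighbourhood basis at $\mathfrak{o}$ is furnished by the clopen sets $K_R:=\Pi(C_R(\Lambda _0))$, and it remains to check continuity of the abelian operations for this topology. I would verify the group-topology axioms at the identity, producing for each $R$ some $R'$ with $K_{R'}+K_{R'}\subseteq K_R$ and $-K_{R'}\subseteq K_R$, reading the group law off representatives and using the Meyer rigidity of Proposition~\ref{prop:meyer} to guarantee that combining two patterns that agree with $\Lambda _0$ on a large ball yields a point of $\mathbf{X}_{eq}$ represented by a pattern agreeing with $\Lambda _0$ on a smaller but still large ball. Translations by elements $t^*$ of the dense subgroup $j(\mathbb{R}^d)$ are automatically homeomorphisms, being induced by the continuous $\mathbb{R}^d$-action through the equivariance $\Pi(\Lambda +t)=\Pi(\Lambda )+t^*$; to promote translation-invariance to \emph{all} of $\mathbf{X}_{eq}$ I would invoke the Ellis semigroup $E(\mathbf{X}_{\Lambda _0},\mathbb{R}^d)$ of Definition~\ref{def:ellis.loc}, whose elements realise arbitrary equicontinuous translations as pointwise limits and act continuously on the finer quotient precisely because of the uniform control established above.

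The step I expect to be the main obstacle is the openness of $\Pi$, that is, showing that $\sim _{eq}$-saturations of cylinders remain open, or equivalently that regional proximality in the combinatoric hull is a finite-type relation. Everything else is either formal (local compactness and Hausdorffness from openness) or a bookkeeping verification of the neighbourhood-filter axioms, whereas this step is exactly where the Meyer hypothesis is indispensable: it is the uniform discreteness of the iterated difference sets of Proposition~\ref{prop:meyer} that converts the purely asymptotic notion of regional proximality into something locally detectable, and hence into an open relation.
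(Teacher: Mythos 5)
Your reduction fails at its central step: the quotient map $\Pi : \mathbf{X}_{\Lambda _0} \twoheadrightarrow \mathbf{X}_{eq}$ is \emph{not} open, and in fact is non-open for essentially every example the theorem is about. Take $\Lambda _0$ to be the Fibonacci model set (any repetitive model set with nontrivial singular fibers works). Over a singular parameter the fiber of $\pi$ contains two patterns $\Delta ^1=\Delta ^2\cup \left\lbrace \gamma _0\right\rbrace$ and $\Delta ^2$, differing in the single point $\gamma _0$. Consider the cylinder $C_R(\Delta ^1)$ with $R>\vert \gamma _0\vert$, so that every member of $C_R(\Delta ^1)$ contains $\gamma _0$, and let $S=\Pi ^{-1}(\Pi (C_R(\Delta ^1)))$ be its saturation, which contains $\Delta ^2$. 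By minimality, for every $R'$ the cylinder $C_{R'}(\Delta ^2)$ contains a translate $\Lambda '$ of a non-singular model set agreeing exactly with $\Delta ^2$ on $B(0,R')$; such a $\Lambda '$ is alone in its $\sim _{eq}$-fiber (theorem \ref{theo:parametrization.map}), so $\Lambda '\in S$ would force $\Lambda '\in C_R(\Delta ^1)$, impossible since $\Lambda '$ omits $\gamma _0$. Hence $S$ contains no cylinder around $\Delta ^2$: saturations of cylinders are not open, $\Pi$ is not open, and your basic sets $K_R=\Pi (C_R(\Lambda _0))$ need not be neighborhoods of $\mathfrak{o}$. This is not a repairable technicality: strong regional proximality is genuinely not a locally detectable (``finite-type'') relation, and the Meyer property cannot make it one. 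Consistently with this, the paper only ever asserts openness of $\Pi$ at a \emph{residual} set of points (propositions \ref{prop:pi.open} and \ref{prop:Pi.open}), never everywhere.

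What the Meyer property actually delivers, and what the paper uses in place of openness, is \emph{properness}: equivalence classes cannot escape to infinity. Concretely (lemma \ref{lem:differences} together with proposition \ref{prop:meyer}), if $\Lambda ^1_\lambda \rightarrow \Lambda$ combinatorially and $\Lambda ^2_\lambda \sim _{eq}\Lambda ^1_\lambda$, then the $\Lambda ^2_\lambda$ stay in a compact set; equivalently, the relation extended by $(\infty ,\infty )$ to the one-point compactification $\hat{\mathbf{X}}_{\Lambda _0}$ is closed. A closed equivalence relation on a \emph{compact} Hausdorff space yields a compact Hausdorff quotient with no openness hypothesis whatsoever, and deleting the point at infinity gives local compactness of $\mathbf{X}_{eq}$ and properness of $\Pi$ (proposition \ref{prop:loc.compact}). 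Your instinct to invoke the Ellis semigroup for the group structure points in the right direction, but the actual mechanism is different from what you sketch: the map $\chi =\Pi \circ ev_{\Lambda _0}: E(\mathbf{X}_{\Lambda _0},\mathbb{R}^d)\rightarrow \mathbf{X}_{eq}$ is a continuous, onto and \emph{proper} semigroup morphism (Meyer enters again here, in proposition \ref{prop:ellis}, to show that no element of $E(\hat{\mathbf{X}}_{\Lambda _0},\mathbb{R}^d)$ other than the constant map sends a point of $\mathbf{X}_{\Lambda _0}$ to $\infty$), and it is properness — not openness — that allows converging nets in $\mathbf{X}_{eq}$ to be lifted to accumulating nets in the Ellis semigroup, giving separate continuity of addition; joint continuity then follows from Ellis' theorem on separately continuous group operations on locally compact spaces, not from a hands-on verification of neighborhood-filter axioms.
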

\vspace{0.2cm}
The great difficulty in proving this statement is to point out how the Meyer property is related to the quotient topology of $\mathbf{X}_{eq}$. To that end a main tool is proposition \ref{prop:regional.proximality} stated below. First we introduce the regional proximality relation of the system $(\mathbf{X}_{\Lambda _0}, \mathbb{R}^d)$, called after \cite{BaKe} the \textit{strong regional proximality}:
\vspace{0.2cm}
\begin{de}\label{def:strong.regional.proximality} Two multiple sets $\Lambda $ and $\Lambda '$ of a combinatoric hull $\mathbf{X}$ are strongly regionaly proximal if for any given $R>0 $ there exists $\Lambda _R$, $\Lambda '_R$ in $\mathbf{X}$ and $t\in \mathbb{R}^d$ such that one has 
\begin{align*} \Lambda \cap B(0,R) &\equiv \Lambda _R\cap B(0,R)\\
\Lambda ' \cap B(0,R) &\equiv \Lambda '_R\cap B(0,R)\\
\Lambda _R\cap B(t,R) &\equiv \Lambda '_R\cap B(t,R)
\end{align*}
\end{de}
\vspace{0.2cm}
This relation means that, although $\Lambda $ and $\Lambda '$ may not agree anywhere, each always agree with some respective $\Lambda _R$ and $\Lambda '_R$ on a patch of radius $R$ around the origin, and the latters in turns agreeing on some patch of radius $R$. In order to prove theorem \ref{theo:loc.compact} we will make use of the following fact :
\vspace{0.2cm}
\begin{prop}\label{prop:regional.proximality}\cite{BaKe} Let $\Lambda $ and $\Lambda '$ be two multiple sets of the hull of a repetitive Meyer multiple set. Then $\Pi(\Lambda )=\Pi (\Lambda ')$ if and only if $\Lambda$ et $\Lambda '$ are strongly regionaly proximal.
\end{prop}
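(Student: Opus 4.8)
The plan is to prove Proposition~\ref{prop:regional.proximality} by relating the strong regional proximality relation on the combinatoric hull $\mathbf{X}_{\Lambda _0}$ to the ordinary regional proximality relation on the local hull $\mathbb{X}_{\Lambda _0}$, for which Theorem~\ref{theo:regional.proximality} already gives the desired characterization of the fibers of $\pi$. Since $\Pi$ and $\pi$ agree as set maps (the vertical maps in the commuting diagram are the identity), proving $\Pi(\Lambda )=\Pi(\Lambda ')$ is the same as proving $\pi(\Lambda )=\pi(\Lambda ')$. By Theorem~\ref{theo:regional.proximality} this holds precisely when $\Lambda $ and $\Lambda '$ are regionally proximal in the sense of the local metric $d$. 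So the entire content of the proposition is that, under the Meyer and repetitivity hypotheses, regional proximality with respect to $d$ is equivalent to strong regional proximality with respect to the ultrametric $\mathsf{d}$ (Definition~\ref{def:strong.regional.proximality}).

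\textbf{The easy direction.} First I would show that strong regional proximality implies regional proximality. Suppose $\Lambda $ and $\Lambda '$ are strongly regionally proximal. For each $R>0$ the witnesses $\Lambda _R,\Lambda '_R$ satisfy exact equalities on balls, and exact agreement on $B(0,R)$ forces $\mathsf{d}(\Lambda ,\Lambda _R)\leq \tfrac{1}{1+R}$, hence also $d(\Lambda ,\Lambda _R)\leq \tfrac{1}{1+R}$ since $d\leq \mathsf{d}$. Likewise $d(\Lambda ',\Lambda '_R)$ is small, and the agreement of $\Lambda _R$ and $\Lambda '_R$ on $B(t,R)$ yields, after translating by $-t$, that $d((\Lambda _R).t,(\Lambda '_R).t)$ is small. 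Letting $R\to \infty $ produces exactly the three inequalities required in Theorem~\ref{theo:regional.proximality}, so $\Lambda $ and $\Lambda '$ are regionally proximal and $\pi(\Lambda )=\pi(\Lambda ')$.

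\textbf{The hard direction.} The reverse implication, that $\pi(\Lambda )=\pi(\Lambda ')$ forces strong regional proximality, is where the Meyer property must enter, and I expect this to be the main obstacle. The difficulty is that regional proximality only provides approximate agreement up to small shifts $|t|,|t'|<\tfrac{1}{1+R}$ in the metric $d$, whereas strong regional proximality demands \emph{exact} combinatorial agreement with no shift at all. To bridge this gap I would proceed as follows. Given $R>0$, apply regional proximality to obtain $x_\varepsilon ,x_\varepsilon '$ and $t\in \mathbb{R}^d$ with the three $d$-estimates for $\varepsilon $ small relative to $R$. The $d$-closeness of $\Lambda $ to $x_\varepsilon $ means they agree on a large ball up to a small translation; the key point is that finite local complexity, and more strongly the Meyer property through Proposition~\ref{prop:meyer}, controls how patterns can be repositioned. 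A small translation that maps one admissible patch exactly onto another forces, by uniform discreteness of the relevant difference sets, the translation vector to lie in a uniformly discrete set, so that a sufficiently small shift must in fact be the zero shift on the combinatorial level. I would use this rigidity to replace each approximate match by an exact one: adjusting $x_\varepsilon ,x_\varepsilon '$ by the small translations (which, being elements of the hull, remain in $\mathbf{X}_{\Lambda _0}$) to produce $\Lambda _R,\Lambda '_R$ agreeing with $\Lambda ,\Lambda '$ \emph{exactly} on $B(0,R)$, while preserving their mutual exact agreement on a ball of radius $R$ about the shifted center. The Meyer hypothesis is precisely what guarantees that these combinatorial adjustments can be made compatibly for all three conditions simultaneously, and that the agreement radii do not degrade.

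Assembling these two directions yields the stated equivalence. In writing the hard direction carefully I would track the interplay between the two metrics: the inequality $d\leq \mathsf{d}$ is automatic, but promoting $d$-closeness to $\mathsf{d}$-closeness (exact combinatorial matching) is exactly where uniform discreteness of the iterated difference set $\underline{\Lambda }_0-\underline{\Lambda }_0$ is indispensable, and this is the step I would spell out in full detail.
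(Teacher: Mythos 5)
First, a point of reference: the paper does not prove this proposition at all --- it is imported from \cite{BaKe} --- so your attempt can only be measured against the ingredients the paper itself establishes, chiefly Proposition \ref{prop:meyer} and Lemma \ref{lem:differences}. Your reduction is sound: $\Pi$ and $\pi$ coincide as set maps, so by Theorem \ref{theo:regional.proximality} (applicable since the hull is compact and minimal here) the proposition is exactly the claim that regional proximality for $d$ coincides with strong regional proximality, and your easy direction is correct as written.

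The hard direction, however, has a genuine gap at precisely the step you single out. After shifting $x_\varepsilon$, $x'_\varepsilon$ into witnesses $y$, $y'$ that agree with $\Lambda$, $\Lambda'$ \emph{exactly} on a large ball around $0$, the third condition of regional proximality only yields that $y-t$ and $y'-t$ agree exactly on a large ball up to one residual relative shift $w$ with $\vert w \vert \leq 6\varepsilon$; the accumulated small shifts do not cancel, and there is no freedom left to translate $y'$ away, since that would destroy its exact agreement with $\Lambda'$ at the origin. You then assert $w=0$ because a small translation matching one admissible patch onto another must ``lie in a uniformly discrete set''. As stated this is unjustified, indeed false: a priori $w$ lies only in $\underline{y}-\underline{y'}$, the difference set of two \emph{distinct} hull elements, and such sets are not controlled by the Meyer property --- for any $a\in \mathbb{R}^d$ the pattern $y'=y-a$ is again in the hull, so difference sets of pairs of hull elements contain arbitrarily small nonzero vectors. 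Proposition \ref{prop:meyer} controls iterated differences of $\Lambda_0$, i.e.\ relative positions \emph{within} hull elements, not relative positions \emph{between} two of them. The missing step is the anchoring device the paper proves as Lemma \ref{lem:differences}: by relative density pick $q\in \underline{\Lambda}\cap B(0,R_0)\subseteq \underline{y}$, $q'\in \underline{\Lambda'}\cap B(0,R_0)\subseteq \underline{y'}$, and a point $p$ of the common patch of $y-t$ and $(y'-t)+w$, so that $p+t\in \underline{y}$ and $p+t-w\in \underline{y'}$; then
\begin{align*}
w \;=\; \bigl[(p+t)-q\bigr]\;+\;\bigl[q-q'\bigr]\;+\;\bigl[q'-(p+t-w)\bigr]\;\in\; (\underline{\Lambda}_0-\underline{\Lambda}_0)\;+\;F\;+\;(\underline{\Lambda}_0-\underline{\Lambda}_0),
\end{align*}
where the outer brackets lie in $\underline{y}-\underline{y}$ and $\underline{y'}-\underline{y'}$, hence in $\underline{\Lambda}_0-\underline{\Lambda}_0$ as in the proof of Lemma \ref{lem:differences}, and $F:=\bigl(\underline{\Lambda}\cap B(0,R_0)\bigr)-\bigl(\underline{\Lambda'}\cap B(0,R_0)\bigr)$ is a \emph{fixed finite} set depending only on $\Lambda,\Lambda'$. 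By Proposition \ref{prop:meyer} the right-hand side is a finite union of translates of a uniformly discrete set, hence locally finite, so there is $\delta>0$ independent of $\varepsilon$ such that it meets $B(0,\delta)$ at most in $\{0\}$; taking $\varepsilon<\delta/6$ forces $w=0$, and your construction then closes. Note the correct logic: one does not show that $w$ lies in a uniformly discrete group containing $0$, but that it lies in a fixed locally finite set, so the witnesses provided by regional proximality for sufficiently small $\varepsilon$ \emph{must} have $w=0$. With this insertion your proof is complete; without it, the rigidity claim you invoke is exactly the content that remains unproven.
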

\vspace{0.2cm}
We add here a simple but useful lemma:

\vspace{0.2cm} 
\begin{lem}\label{lem:differences} Let $\Lambda _0$ be a point pattern of $\mathbb{R}^d$ with hull $\mathbf{X}_{\Lambda _0}$. If two point patterns $\Lambda ,\Lambda '$ in $\mathbf{X}_{\Lambda _0}$ are strongly regionally proximal then $\underline{\Lambda }-\underline{\Lambda }' \subseteq \underline{\Lambda }_0 - \underline{\Lambda } _0 +\underline{\Lambda }_0 - \underline{\Lambda } _0 + \underline{\Lambda }_0 - \underline{\Lambda } _0 $.
\end{lem}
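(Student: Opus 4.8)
The plan is to reduce the two-pattern statement to a one-pattern fact: for \emph{any} $\Lambda$ lying in the hull $\mathbf{X}_{\Lambda _0}$ one has $\underline{\Lambda }-\underline{\Lambda }\subseteq \underline{\Lambda }_0-\underline{\Lambda }_0$. To prove this auxiliary inclusion I fix $\gamma ,\delta \in \underline{\Lambda }$ and choose $R>|\gamma -\delta |$. Since $\delta \in \underline{\Lambda }$, the patch $(\Lambda -\delta )\cap B(0,R)$ belongs to $\mathcal{L}_\Lambda \subseteq \mathcal{L}_{\Lambda _0}$, so it occurs in $\Lambda _0$: there is $\eta \in \underline{\Lambda }_0$ with $(\Lambda _0-\eta )\cap B(0,R)=(\Lambda -\delta )\cap B(0,R)$. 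The point $\gamma -\delta $ lies in the left-hand configuration, hence corresponds to a point $\zeta -\eta $ of the right-hand one with $\zeta \in \underline{\Lambda }_0$, so $\gamma -\delta =\zeta -\eta \in \underline{\Lambda }_0-\underline{\Lambda }_0$. Note this step uses only the language inclusion and not the Meyer hypothesis.

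Next I would bridge $\Lambda $ and $\Lambda '$ through their strong regional proximality. Fixing $\gamma \in \underline{\Lambda }$ and $\gamma '\in \underline{\Lambda }'$, I choose $R$ larger than $|\gamma |$, $|\gamma '|$ and a relative-density radius $R_0$ valid for $\underline{\Lambda }_0$ and inherited by all hull elements. Applying Definition~\ref{def:strong.regional.proximality} produces $\Lambda _R,\Lambda '_R\in \mathbf{X}_{\Lambda _0}$ and $t\in \mathbb{R}^d$ with $\Lambda \cap B(0,R)\equiv \Lambda _R\cap B(0,R)$, $\Lambda '\cap B(0,R)\equiv \Lambda '_R\cap B(0,R)$ and $\Lambda _R\cap B(t,R)\equiv \Lambda '_R\cap B(t,R)$. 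The first two agreements give $\gamma \in \underline{\Lambda _R}$ and $\gamma '\in \underline{\Lambda '_R}$, since both points sit strictly inside $B(0,R)$. By relative density the ball $B(t,R)$ carries a point $\sigma \in \underline{\Lambda _R}$, and the third agreement forces $\sigma \in \underline{\Lambda '_R}$ as well.

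It then suffices to split $\gamma -\gamma '=(\gamma -\sigma )+(\sigma -\gamma ')$. The first summand lies in $\underline{\Lambda _R}-\underline{\Lambda _R}$ and the second in $\underline{\Lambda '_R}-\underline{\Lambda '_R}$; as $\Lambda _R,\Lambda '_R\in \mathbf{X}_{\Lambda _0}$, the one-pattern fact carries each into $\underline{\Lambda }_0-\underline{\Lambda }_0$. Hence $\gamma -\gamma '\in (\underline{\Lambda }_0-\underline{\Lambda }_0)+(\underline{\Lambda }_0-\underline{\Lambda }_0)$, which is a fortiori contained in the three-fold combination asserted in the statement (the extra copy is free, since $0\in \underline{\Lambda }_0-\underline{\Lambda }_0$). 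In fact the argument already yields the sharper two-fold containment.

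There is no serious obstacle here; the statement is essentially bookkeeping. The two points that need genuine care are the correct reading of $\mathcal{L}_\Lambda \subseteq \mathcal{L}_{\Lambda _0}$ used in the auxiliary inclusion, and the existence of a single bridging point $\sigma $ belonging simultaneously to $\underline{\Lambda _R}$ and $\underline{\Lambda '_R}$, which I secure by taking $R$ beyond the relative-density radius before invoking the overlap agreement on $B(t,R)$. The real weight of the Meyer property does not enter this lemma at all: it is felt only downstream, where this containment is combined with Proposition~\ref{prop:meyer} to conclude that $\underline{\Lambda }-\underline{\Lambda }'$ is uniformly discrete.
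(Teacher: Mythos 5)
Your proof is correct, and it takes a genuinely different (and in fact sharper) route than the paper's. The paper works at the fixed relative-density radius $R_0$: it extracts from the definition of strong regional proximality three common points $p_1\in \underline{\Lambda }\cap \underline{\Lambda }_1$, $p_2\in \underline{\Lambda }'\cap \underline{\Lambda }_2$ (both near the origin) and $p_3\in \underline{\Lambda }_1\cap \underline{\Lambda }_2$ (near $t$), and telescopes $x-p_2=(x-p_1)+(p_1-p_3)+(p_3-p_2)$, landing in three copies of $\underline{\Lambda }_0-\underline{\Lambda }_0$. You instead let the radius $R$ grow with the particular pair $(\gamma ,\gamma ')$, so that the agreement balls swallow $\gamma $ and $\gamma '$ themselves; then a single bridging point $\sigma \in \underline{\Lambda _R}\cap \underline{\Lambda '_R}$, supplied by relative density inside $B(t,R)$, suffices, and the two-term split $(\gamma -\sigma )+(\sigma -\gamma ')$ gives a two-fold containment, stronger than the stated three-fold one. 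Beyond the better constant, your version has an advantage of precision: the paper's telescoping is anchored at the one specific point $p_2\in \underline{\Lambda }'$, so as written it establishes $\underline{\Lambda }\subseteq (\underline{\Lambda }_0-\underline{\Lambda }_0)+(\underline{\Lambda }_0-\underline{\Lambda }_0)+(\underline{\Lambda }_0-\underline{\Lambda }_0)+\underline{\Lambda }'$ --- which is the form actually invoked later, e.g. in Propositions \ref{prop:loc.compact} and \ref{prop:internal.group} --- rather than the stated inclusion for \emph{every} pair in $\underline{\Lambda }-\underline{\Lambda }'$; pushing the paper's decomposition to an arbitrary $\gamma '\in \underline{\Lambda }'$ would cost a fourth difference set, whereas your radius-inflation handles all pairs uniformly. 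Your detailed verification of the one-pattern fact $\underline{\Lambda }-\underline{\Lambda }\subseteq \underline{\Lambda }_0-\underline{\Lambda }_0$ via the language inclusion (which the paper dismisses as clear), and your closing remark that the Meyer property plays no role in the lemma itself and only enters downstream through Proposition \ref{prop:meyer}, both agree with the paper.
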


\vspace{0.2cm} 
\begin{proof} Given some point pattern $\Lambda \in \mathbf{X}_{\Lambda _0}$, it is clear since any local configuration of $\Lambda $ appears somewhere in $\Lambda _0$ that $\underline{\Lambda }-\underline{\Lambda } \subseteq \underline{\Lambda }_0-\underline{\Lambda }_0$. Now if we suppose $\Lambda $ and $\Lambda '$ strongly regionally proximal then there exists, for $R_0$ to be some radius of relative density of any multiple set in $\mathbf{X}_{\Lambda _0}$, some $\Lambda _1$ and $\Lambda _2$ together with a vector $t\in \mathbb{R}^d$ such that one can find points $p_1$, $p_2$ et $p_3$ satisfying
\begin{align*} p_1 \in \Lambda \cap B(0,R_0) & \equiv \Lambda _1 \cap B(0,R_0)\\ p_2 \in \Lambda ' \cap B(0,R_0) & \equiv \Lambda _2\cap B(0,R_0)\\ p_3 \in \Lambda _1\cap B(t,R_0) & \equiv \Lambda _2 \cap B(t,R_0)
\end{align*}
It ensure that $\Lambda = \Lambda -p_1 +(p_1-p_3)+(p_3-p_2)+p_2 $ is supported into $ (\underline{\Lambda }-\underline{\Lambda } )+ (\underline{\Lambda }_1-\underline{\Lambda }_1)+ (\underline{\Lambda }_2-\underline{\Lambda }_2) +\underline{\Lambda }'$, yielding the proof.
\end{proof}

\vspace{0.2cm} 
\textbf{Proof of the theorem.} Let us start by supposing that $\Lambda _0$ is a Delone multiple set of finite local complexity in $\mathbb{R}^d$. The space $\mathbf{X}_{\Lambda _0}$ is consequently locally compact and one can form the Ellis semigroup $E(\mathbf{X}_{\Lambda _0}, \mathbb{R}^d)$ (see definition \ref{def:ellis.loc}), which is a right-topological semigroup.

\vspace{0.2cm}
\begin{prop}\label{prop:morphisme.ellis.equi} The mapping given by the composition $\Pi \circ ev_{\Lambda _0}$, $ev_{\Lambda _0}$ denoting the evaluation map at $\Lambda _0$ on the Ellis semigroup $E(\mathbf{X}_{\Lambda _0}, \mathbb{R}^d)$,
\begin{align*}\begin{psmatrix}[colsep=1.5cm,
rowsep=0cm]
\chi : E(\mathbf{X}_{\Lambda _0}, \mathbb{R}^d) \; \; &  \; \; \mathbf{X}_{eq} &
\psset{arrows=->,linewidth=0.2pt, labelsep=1pt
,nodesep=0pt}
\ncline{1,1}{1,2}
\end{psmatrix}  
\end{align*}
is a continuous semigroup morphism.
\end{prop}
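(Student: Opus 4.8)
The plan is to establish the two assertions separately: continuity by a direct composition argument, and the morphism property by a two–step density argument that I would deliberately carry out in the coarser group $\mathbb{X}_{eq}$ rather than in $\mathbf{X}_{eq}$.

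For continuity, I would simply note that $\chi = \Pi \circ ev_{\Lambda _0}$ is a composition of two continuous maps. The evaluation $ev_{\Lambda _0}\colon E(\mathbf{X}_{\Lambda _0},\mathbb{R}^d)\to \mathbf{X}_{\Lambda _0}$, $g\mapsto \Lambda _0.g$, is continuous because it is the restriction to $E(\mathbf{X}_{\Lambda _0},\mathbb{R}^d)$ of a coordinate projection of the pointwise topology on $\widehat{\mathbf{X}}_{\Lambda _0}^{\,\widehat{\mathbf{X}}_{\Lambda _0}}$, and its image lands in $\mathbf{X}_{\Lambda _0}$ since every element of $E(\mathbf{X}_{\Lambda _0},\mathbb{R}^d)$ preserves $\mathbf{X}_{\Lambda _0}$; and $\Pi$ is continuous, being a quotient map. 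On the dense subgroup $T^*\subseteq E(\mathbf{X}_{\Lambda _0},\mathbb{R}^d)$ the restriction of $\chi$ is moreover a group homomorphism: from the $\mathbb{R}^d$-equivariance $\Pi(\Lambda +t)=\Pi(\Lambda )+t^*$ and the relation $s^*t^*=(s+t)^*$ in $T^*$ one reads off $\chi(s^*t^*)=\chi(s^*)+\chi(t^*)$.

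The delicate point is the identity $\chi(gh)=\chi(g)+\chi(h)$ for arbitrary $g,h\in E(\mathbf{X}_{\Lambda _0},\mathbb{R}^d)$. This is a purely algebraic statement about elements of the group $\mathbf{X}_{eq}$, so I would verify it after composing $\chi$ with the continuous identity map $\mathbf{X}_{eq}\to \mathbb{X}_{eq}$ onto the compact topological Abelian group $\mathbb{X}_{eq}$, calling the resulting continuous map $\bar\chi$. The reason for this detour is exactly the main obstacle: at this stage $\mathbf{X}_{eq}$ is not yet known to be a topological group (this is precisely what Theorem \ref{theo:loc.compact} will assert), so translations on $\mathbf{X}_{eq}$ need not be continuous, whereas on $\mathbb{X}_{eq}$ they are. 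Since $\mathbf{X}_{eq}$ and $\mathbb{X}_{eq}$ share the same underlying group, establishing $\bar\chi(gh)=\bar\chi(g)+\bar\chi(h)$ is equivalent to the desired identity for $\chi$.

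Finally I would run the density argument in two steps, exploiting the one-sided continuity of the Ellis semigroup. First, fixing $s^*\in T^*$: the map $h\mapsto \bar\chi(s^*h)$ is continuous, because $s^*$ is a homeomorphism of $\mathbf{X}_{\Lambda _0}$ and composition with it preserves pointwise limits, while $h\mapsto \bar\chi(s^*)+\bar\chi(h)$ is continuous as a translation in the topological group $\mathbb{X}_{eq}$; these two maps agree on the dense set $T^*$ by the homomorphism property above, hence on all of $E(\mathbf{X}_{\Lambda _0},\mathbb{R}^d)$, giving $\bar\chi(s^*h)=\bar\chi(s^*)+\bar\chi(h)$ for every $h$. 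Second, fixing $h$: the map $g\mapsto \bar\chi(gh)$ is continuous because $E(\mathbf{X}_{\Lambda _0},\mathbb{R}^d)$ is right-topological, so $g\mapsto gh$ is continuous, while $g\mapsto \bar\chi(g)+\bar\chi(h)$ is again a continuous translation in $\mathbb{X}_{eq}$; these agree on the dense $T^*$ by the first step, hence on all of $E(\mathbf{X}_{\Lambda _0},\mathbb{R}^d)$. This yields $\bar\chi(gh)=\bar\chi(g)+\bar\chi(h)$, and therefore $\chi$ is a continuous semigroup morphism. The crux is the passage to the coarser group $\mathbb{X}_{eq}$ to secure continuity of translations; everything else is routine density and one-sided continuity bookkeeping.
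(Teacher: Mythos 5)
Your proof is correct and is essentially the paper's own argument: the paper likewise transfers the verification through the continuous identity map onto the compact group $\mathbb{X}_{eq}$, where addition is continuous, and combines density of $T^*$ with the one-sided continuity of $E(\mathbf{X}_{\Lambda _0}, \mathbb{R}^d)$ (the paper obtains your Step 1 more directly, as the identity $\chi (g.t)=\chi (g)+t^*$ via equivariance of $\Pi$, and runs your Step 2 as an explicit net argument with uniqueness of limits). One caveat on bookkeeping: your appeal to right-topologicity in Step 2 is valid for the function-composition order $(gh)(x)=g(h(x))$, which is the order your justifications implicitly use, whereas the paper writes actions on the right so that its stated continuity is that of $g\mapsto h.g$; since the target group is Abelian the two orderings yield the same morphism identity, so this is only a matter of keeping the convention consistent, not a gap.
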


\vspace{0.2cm}
\begin{proof} The map $\chi $ is continuous, being the composition of two continuous maps. The image under $\chi$ of the identity map in $E(\mathbf{X}_{\Lambda _0}, \mathbb{R}^d)$ is $\mathfrak{o}$ from the choice of the latter, and for each $t\in \mathbb{R}^d$ and $g\in E(\mathbf{X}_{\Lambda _0}, \mathbb{R}^d)$ we have the equality $\chi (g.t)= \Pi(\Lambda _0.g+t)=  \Pi(\Lambda _0.g)+ t^*= \chi (g)+t^*$. Select then two transformations $g$ and $h$ in $E(\mathbf{X}_{\Lambda _0}, \mathbb{R}^d)$, and consider a net $(t_\lambda )_\lambda \subset \mathbb{R}^d$ converging to $h$ in $E(\mathbf{X}_{\Lambda _0}, \mathbb{R}^d)$ (that is, pointwise on $\mathbf{X}_{\Lambda _0}$). From the right-continuity of the composition law on $E(\mathbf{X}_{\Lambda _0}, \mathbb{R}^d)$ we have on one hand that $\chi (g.h)$ is the limit of $ \chi (g.t_\lambda )= \chi (g)+ t_\lambda^*$ in $\mathbf{X}_{eq}$, and on the other hand that $ \chi (h)= \Pi(\Lambda _0.h)$ is the limit of $\Pi(\Lambda _0+t_\lambda)= t_\lambda^*$ in $\mathbf{X}_{eq}$. But the identity map from $\mathbf{X}_{eq}$ onto $\mathbb{X}_{eq}$ is continuous so these convergences also hold in $\mathbb{X}_{eq}$. Since for this latter the sum is continuous we deduce that $\chi (g)+ t_\lambda^*$ converges to $\chi (g)+ \chi (h)$ in $\mathbb{X}_{eq}$, giving $\chi (g.h)=\chi(g)+\chi (h)$, as desired.
\end{proof}

\vspace{0.2cm}
Consider the one-point compactification $\hat{\mathbf{X}}_{\Lambda _0}$ of  the space$\mathbf{X}_{\Lambda _0}$. It writes as a disjoint union $\mathbf{X}_{\Lambda _0}\cup \left\lbrace \infty \right\rbrace $, with basis of open neighborhoods of $\infty $ given by the complementary sets of the compact subsets of $\mathbf{X}_{\Lambda _0}$. Consider the extended $\mathbb{R}^d$-action by homeomorphisms keeping the point at infinity $\infty $ fixed, and consider the compact right-topological semigroup $E(\hat{\mathbf{X}}_{\Lambda _0}, \mathbb{R}^d)$. This later naturally contains an isomorphic copy of $E(\mathbf{X}_{\Lambda _0}, \mathbb{R}^d)$, as well as the map $\bowtie _{\mathbf{X}_{\Lambda _0}}$ identically equal to $\infty $ on $\hat{\mathbf{X}}_{\Lambda _0}$ by proposition \ref{prop: application.infini}.

\vspace{0.2cm}
\begin{prop}\label{prop:ellis} Suppose that $\Lambda _0$ has the Meyer property. If a transformation $g$ in $E(\hat{\mathbf{X}}_{\Lambda _0}, \mathbb{R}^d)$ gives $\Lambda .g\neq \infty $ for some $\Lambda \in \mathbf{X}_{\Lambda _0}$, then $\Lambda .g\neq \infty $ for any other $\Lambda \in \mathbf{X}_{\Lambda _0}$. In other words we have $$E(\mathbf{X}_{\Lambda _0}, \mathbb{R}^d )= E(\hat{\mathbf{X}}_{\Lambda _0}, \mathbb{R}^d )\backslash \lbrace\bowtie _{\mathbf{X}_{\Lambda _0}}\rbrace $$
\end{prop}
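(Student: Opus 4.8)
The plan is to represent $g$ as a pointwise limit of genuine translations and to prove the contrapositive dichotomy: if $\Lambda.g\neq\infty$ for some $\Lambda$, then $\Lambda'.g\neq\infty$ for every $\Lambda'$. Write $g=\lim_\lambda t_\lambda^*$ as the limit of a net in $E(\hat{\mathbf{X}}_{\Lambda _0},\mathbb{R}^d)$, so that $\Delta.g=\lim_\lambda (\Delta.t_\lambda)$ in $\hat{\mathbf{X}}_{\Lambda _0}$ for every $\Delta$. Fix $\Lambda$ with $\Lambda.g=\Gamma\neq\infty$ and an arbitrary $\Lambda'$. The whole point will be to exhibit a single compact set $K\subseteq\mathbf{X}_{\Lambda _0}$ eventually containing the net $(\Lambda'.t_\lambda)_\lambda$: since the sets $\hat{\mathbf{X}}_{\Lambda _0}\setminus K$ ($K$ compact) form a neighbourhood basis at $\infty$, a net that is eventually inside $K$ cannot converge to $\infty$, which forces $\Lambda'.g\neq\infty$.

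The key input, and the only place the Meyer hypothesis enters, is a uniform discreteness statement. For any two patterns of the hull one has $\underline{\Lambda}-\underline{\Lambda}\subseteq\underline{\Lambda}_0-\underline{\Lambda}_0$ and likewise for $\Lambda'$, so that
\[ (\underline{\Lambda'}-\underline{\Lambda})-(\underline{\Lambda'}-\underline{\Lambda})=(\underline{\Lambda'}-\underline{\Lambda'})-(\underline{\Lambda}-\underline{\Lambda})\subseteq(\underline{\Lambda}_0-\underline{\Lambda}_0)+(\underline{\Lambda}_0-\underline{\Lambda}_0). \]
By Proposition \ref{prop:meyer} the right-hand side is uniformly discrete, hence $0$ is isolated in the difference set on the left, which says precisely that $D:=\underline{\Lambda'}-\underline{\Lambda}$ is itself uniformly discrete in $\mathbb{R}^d$. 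This is the same computation already used in Lemma \ref{lem:differences}, now applied to two a priori unrelated patterns.

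Next I would exploit the combinatoric convergence $\Lambda.t_\lambda\to\Gamma$. Choose any $c\in\underline{\Gamma}$; for every radius $R>|c|$ there is a tail of the net along which $(\Lambda-t_\lambda)\cap B(0,R)=\Gamma\cap B(0,R)$, so in particular $c\in\underline{\Lambda-t_\lambda}$, i.e. $q_\lambda:=c+t_\lambda\in\underline{\Lambda}$. Thus eventually $t_\lambda=q_\lambda-c$ with $q_\lambda\in\underline{\Lambda}$, whence
\[ \underline{\Lambda'.t_\lambda}=\underline{\Lambda'}-t_\lambda=(\underline{\Lambda'}-q_\lambda)+c\subseteq(\underline{\Lambda'}-\underline{\Lambda})+c=D+c=:S. \]
So along a tail of the net every $\Lambda'.t_\lambda$ has its support contained in the single uniformly discrete set $S$. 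The set $K:=\{\Delta\in\mathbf{X}_{\Lambda _0}\mid\underline{\Delta}\subseteq S\}$ is closed in $\mathbf{X}_{\Lambda _0}$, and since $S\cap B(0,R)$ is finite for each $R$ only finitely many patterns $\Delta\cap B(0,R)$ occur among its elements; hence $K$ is totally bounded for the ultrametric $\mathsf{d}$, and therefore compact. As $(\Lambda'.t_\lambda)_\lambda$ is eventually in $K$ it cannot converge to $\infty$, so $\Lambda'.g\neq\infty$, establishing the dichotomy.

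Finally the displayed identity follows formally: by the dichotomy any $g\neq\bowtie_{\mathbf{X}_{\Lambda _0}}$ sends no point to $\infty$ and fixes $\infty$, hence lies in $\mathcal{F}_{\mathbf{X}_{\Lambda _0}}$ and so in $E(\mathbf{X}_{\Lambda _0},\mathbb{R}^d)$ by Definition \ref{def:ellis.loc}, whereas $\bowtie_{\mathbf{X}_{\Lambda _0}}$ is excluded by Proposition \ref{prop: application.infini}. I expect the main obstacle to be the middle step — recognizing that the Meyer property forces $\underline{\Lambda'}-\underline{\Lambda}$ to be uniformly discrete for arbitrary patterns of the hull, thereby trapping the translated supports in one fixed uniformly discrete set — together with the care needed to argue with nets rather than sequences once one passes to the one-point compactification.
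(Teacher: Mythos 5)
Your proof is correct and takes essentially the same approach as the paper's: both anchor the net of translations inside $\underline{\Lambda}$ (you through a point $c$ of the limit pattern $\Lambda.g$, the paper through a common point of the stabilized patches $(\Lambda_1+t_\lambda)\cap B(0,R_0)$, which is the same device), both invoke Proposition \ref{prop:meyer} to obtain uniform discreteness of a fourfold difference set of $\underline{\Lambda}_0$, and both conclude by trapping the net $(\Lambda'.t_\lambda)$ eventually inside a compact subset of $\mathbf{X}_{\Lambda_0}$, so that its limit cannot be $\infty$. The only cosmetic difference is the trapping set: you take $K=\lbrace \Delta\in\mathbf{X}_{\Lambda_0} \, \vert \, \underline{\Delta}\subseteq S\rbrace$ and prove total boundedness (implicitly using the easy fact that $(\mathbf{X}_{\Lambda_0},\mathsf{d})$ is complete), whereas the paper uses a finite union of translates of the compact transversal $\Xi$; both are valid.
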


\vspace{0.2cm}
\begin{proof} Consider a transformation $g\in E(\hat{\mathbf{X}}_{\Lambda _0}, \mathbb{R}^d )$ with some $\Lambda _1\in \mathbf{X}_{\Lambda _0}$ with $\Lambda _1.g \neq \infty $, and let $\Lambda _2\in \mathbf{X}_{\Lambda _0}$. We can chose a net $(t_\lambda )_\lambda \subset \mathbb{R}^d $ converging to $g$ in $E(\mathbf{X}_{\Lambda _0}, \mathbb{R}^d )$, which has to satisfy \begin{tabular}{l} $\Lambda _1+t_\lambda \longrightarrow \Lambda _1.g $ \\ $\Lambda _2+t_\lambda \longrightarrow \Lambda _2.g $ \end{tabular} in $\mathbf{X}_{\Lambda _0}$.
Let $r_0$ and $R_0$ be some radii of uniform discretness and relative density of any multiple set of $\mathbf{X}_{\Lambda _0}$. As $\Lambda _1+t_\lambda $ converges to $ \Lambda _1.g $ with respect to the combinatoric metric, there exists an index $\lambda _0$ so that for any $\lambda > \lambda _0$ we may find a common point $$p\in (\Lambda _1+t_\lambda )\cap B(0 , R_0)\equiv (\Lambda _1+t_{\lambda '})\cap B(0, R_0)\equiv (\Lambda _1.g) \cap B(0, R_0)$$
We hence have $t_\lambda - t_{\lambda '}\in \Lambda _1-\Lambda _1$ for any $\lambda , \lambda'>\lambda_0$. Select then a net $v_\lambda $ of points, each taken within the patch $(\Lambda _2+t_\lambda )\cap B(0, R_0)$. Each $v _\lambda$ can be written as $p_\lambda + t_\lambda $ for some point $p_\lambda\in \Lambda _2$. Consider then the collection $\left\lbrace v_\lambda \right\rbrace _{\lambda >\lambda _0}$, lying inside the relatively compact set $B(0,2R_0)$: the difference $v_\lambda -v_{\lambda '}$ is equal to $ (p_\lambda + t_\lambda) -( p_{\lambda '} + t_{\lambda '} )= (p_\lambda - p_{\lambda '})+ (t_\lambda - t_{\lambda '} )$, so falls into $\Lambda _2-\Lambda _2+ \Lambda _1-\Lambda _1\subset \Lambda _0-\Lambda _0+ \Lambda _0-\Lambda _0$, uniformly discrete due to the Meyer property. The collection $\left\lbrace v_\lambda \right\rbrace _{\lambda >\lambda _0}$ is consequently uniformly discrete, and so is finite. Pick up some finite set of representatives $v_{\lambda _1}, ..., v_{\lambda _l}$: the net $( \Lambda _2+t_\lambda )_{\lambda >\lambda _0} $ stay within the compact subset $\bigcup _{i=1}^l \Xi + v_{\lambda _i} $ of $\mathbf{X}_{\Lambda _0}$, where $\Xi $ is the canonical transversal. Thus the limit $\Lambda _2.g$ lies inside $\mathbf{X}_{\Lambda _0}$, and as $\Lambda _2$ has been chosen arbitrary in $\mathbf{X}_{\Lambda _0}$ the statement is proved.
\end{proof}

\vspace{0.2cm}
\begin{cor}\label{cor:big.enough} Suppose that $\Lambda _0$ has the Meyer property. Then the space $E(\mathbf{X}_{\Lambda _0}, \mathbb{R}^d)$ is locally compact and the evaluation map
\begin{align*}\begin{psmatrix}[colsep=1.3cm,
rowsep=0cm]
ev_{\Lambda _0}: E(\mathbf{X}_{\Lambda _0}, \mathbb{R}^d) \; \; &  \; \;\mathbf{X}_{\Lambda _0} &
\psset{arrows=->>,linewidth=0.2pt, labelsep=1pt
,nodesep=0pt}
\ncline{1,1}{1,2}
\end{psmatrix}  
\end{align*}
is onto and proper.
\end{cor}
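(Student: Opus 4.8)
The plan is to read off all three assertions directly from Proposition \ref{prop:ellis}, which identifies $E(\mathbf{X}_{\Lambda_0}, \mathbb{R}^d)$ as the complement of the single transformation $\bowtie_{\mathbf{X}_{\Lambda_0}}$ inside the compact Hausdorff semigroup $E(\hat{\mathbf{X}}_{\Lambda_0}, \mathbb{R}^d)$. Indeed $E(\hat{\mathbf{X}}_{\Lambda_0}, \mathbb{R}^d)$, being a closed subspace of the product $\hat{\mathbf{X}}_{\Lambda_0}^{\hat{\mathbf{X}}_{\Lambda_0}}$, is compact and Hausdorff, and it carries $E(\mathbf{X}_{\Lambda_0}, \mathbb{R}^d)$ with the induced topology by Definition \ref{def:ellis.loc}. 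For local compactness I would simply observe that the singleton $\{\bowtie_{\mathbf{X}_{\Lambda_0}}\}$ is closed in this compact Hausdorff space, so by Proposition \ref{prop:ellis} its complement $E(\mathbf{X}_{\Lambda_0}, \mathbb{R}^d)$ is open therein; and an open subspace of a compact Hausdorff space is locally compact.

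For surjectivity I would exploit that $\Lambda_0$ has dense orbit in $\mathbf{X}_{\Lambda_0}$ — this is where repetitivity enters (a standing hypothesis of Theorem \ref{theo:loc.compact}), through the criterion that $(\mathbf{X}_{\Lambda_0}, \mathbb{R}^d)$ is minimal precisely when $\Lambda_0$ is repetitive. Given any $\Lambda \in \mathbf{X}_{\Lambda_0}$, pick a net $t_\lambda$ with $\Lambda_0 + t_\lambda \to \Lambda$; by compactness of $E(\hat{\mathbf{X}}_{\Lambda_0}, \mathbb{R}^d)$ a subnet of $(t_\lambda^*)$ converges to some $g$, and since evaluation at a fixed point is continuous for the product topology one gets $\Lambda_0 . g = \Lambda \neq \infty$. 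Proposition \ref{prop:ellis} then places $g$ in $E(\mathbf{X}_{\Lambda_0}, \mathbb{R}^d)$, so $ev_{\Lambda_0}(g) = \Lambda$ and the map is onto.

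Properness is the part requiring the most care, though it too collapses onto Proposition \ref{prop:ellis}. I would let $K \subseteq \mathbf{X}_{\Lambda_0}$ be compact; it is closed in $\hat{\mathbf{X}}_{\Lambda_0}$ and avoids $\infty$. The extended evaluation $\widehat{ev}_{\Lambda_0} : E(\hat{\mathbf{X}}_{\Lambda_0}, \mathbb{R}^d) \to \hat{\mathbf{X}}_{\Lambda_0}$, $g \mapsto \Lambda_0 . g$, is continuous, so $\widehat{ev}_{\Lambda_0}^{-1}(K)$ is closed in the compact space $E(\hat{\mathbf{X}}_{\Lambda_0}, \mathbb{R}^d)$, hence compact. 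The crucial point is that this preimage is already contained in $E(\mathbf{X}_{\Lambda_0}, \mathbb{R}^d)$: any $g$ with $\Lambda_0 . g \in K$ satisfies $\Lambda_0 . g \neq \infty$, so by Proposition \ref{prop:ellis} no point of $\mathbf{X}_{\Lambda_0}$ is sent to $\infty$ by $g$, whence $g \neq \bowtie_{\mathbf{X}_{\Lambda_0}}$. Consequently $\widehat{ev}_{\Lambda_0}^{-1}(K)$ coincides with $ev_{\Lambda_0}^{-1}(K)$ and, being compact in the ambient space while sitting inside the subspace (whose induced topology agrees, as noted after Definition \ref{def:ellis.loc}), is compact in $E(\mathbf{X}_{\Lambda_0}, \mathbb{R}^d)$.

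The genuine obstacle in this circle of ideas has already been dispatched in Proposition \ref{prop:ellis} — namely the Meyer-property argument showing that a transformation cannot send a single point to $\infty$ without sending every point there. Once that dichotomy is in hand, the remaining work is purely topological bookkeeping: recognizing a compact space minus a closed point, extracting convergent subnets, and checking that preimages of compacta avoiding $\infty$ live inside $E(\mathbf{X}_{\Lambda_0}, \mathbb{R}^d)$.
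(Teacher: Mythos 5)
Your proposal is correct and follows essentially the same route as the paper: both rest on Proposition \ref{prop:ellis} to identify $E(\mathbf{X}_{\Lambda _0}, \mathbb{R}^d)$ with $E(\hat{\mathbf{X}}_{\Lambda _0}, \mathbb{R}^d)\backslash \lbrace \bowtie _{\mathbf{X}_{\Lambda _0}}\rbrace$, use the continuous evaluation map on the compact semigroup together with the density of the $\mathbb{R}^d$-orbit of $\Lambda _0$ to get surjectivity, and deduce properness from the dichotomy that no transformation other than $\bowtie _{\mathbf{X}_{\Lambda _0}}$ ever sends a point of $\mathbf{X}_{\Lambda _0}$ to $\infty$ (your write-up in fact spells out the properness step that the paper dismisses with ``properness easily follows''). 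One minor correction: the density of the orbit of $\Lambda _0$ in $\mathbf{X}_{\Lambda _0}$ does not come from repetitivity or minimality --- it holds for the distinguished point $\Lambda _0$ by the very definition of the hull through language containment, which matters here because the corollary assumes only the Meyer property and not repetitivity.
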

\vspace{0.2cm}
\begin{proof} The evaluation map $ev_{\Lambda _0}:  E(\hat{\mathbf{X}}_{\Lambda _0}, \mathbb{R}^d) \longrightarrow \hat{\mathbf{X}}_{\Lambda _0} $ from the very definition of the topology on $E(\hat{\mathbf{X}}_{\Lambda _0}, \mathbb{R}^d)$, and is onto since its image is a compact set containing the $\mathbb{R}^d$-orbit of $\Lambda _0$, dense in $\mathbf{X}_{\Lambda _0}$ and consequently dense in $\hat{\mathbf{X}}_{\Lambda _0}$ (the space $\mathbf{X}_{\Lambda _0}$ being never compact, even for $\Lambda _0$ a lattice, the point at infinity $\hat{\mathbf{X}}_{\Lambda _0}$ is always an accumulation point of $\mathbf{X}_{\Lambda _0}$). Any antecedent transformation of the point $\infty$ under the map $ev_{\Lambda _0}$ in $E(\hat{\mathbf{X}}_{\Lambda _0}, \mathbb{R}^d)$ must maps $\Lambda _0$ onto $\infty $, and so has to be $\bowtie _{\mathbf{X}_{\Lambda _0}}$ from the previous proposition. Hence the restriction of $ev_{\Lambda _0}$ outside $\bowtie _{\mathbf{X}_{\Lambda _0}}$ takes its values in $\mathbf{X}_{\Lambda _0}$ and is onto. Properness easily follows.
\end{proof}
\vspace{0.2cm}
\begin{prop}\label{prop:loc.compact} Suppose that $\Lambda _0$ has the Meyer property and is repetitive. Then the space $\mathbf{X}_{eq}$ is locally compact, and $ \Pi$ is a proper map.
\end{prop}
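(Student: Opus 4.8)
The plan is to prove local compactness of $\mathbf{X}_{eq}$ by transferring compactness through the chain of maps established in the preceding propositions. Recall that we have a continuous semigroup morphism $\chi = \Pi \circ ev_{\Lambda_0} : E(\mathbf{X}_{\Lambda_0}, \mathbb{R}^d) \twoheadrightarrow \mathbf{X}_{eq}$ (Proposition \ref{prop:morphisme.ellis.equi}), and from Corollary \ref{cor:big.enough} the evaluation map $ev_{\Lambda_0} : E(\mathbf{X}_{\Lambda_0}, \mathbb{R}^d) \twoheadrightarrow \mathbf{X}_{\Lambda_0}$ is onto and proper. Since $\Pi$ factors as $\chi$ composed with a section-free quotient, the first task is to confirm that $\chi$ is \emph{surjective}: its image is $\Pi(\mathbf{X}_{\Lambda_0})= \mathbf{X}_{eq}$ because $ev_{\Lambda_0}$ is onto and $\Pi$ is onto by construction. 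The key structural observation is that $\Pi$ is the quotient of $\mathbf{X}_{\Lambda_0}$ by the strong regional proximality relation, which by Proposition \ref{prop:regional.proximality} is exactly the fibre relation of $\Pi$.

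First I would establish properness of $\Pi$ directly. Let $K \subseteq \mathbf{X}_{eq}$ be compact; I must show $\Pi^{-1}(K)$ is compact in $\mathbf{X}_{\Lambda_0}$. The strategy is to lift $K$ into the Ellis semigroup: by continuity and surjectivity of $\chi$, the preimage $\chi^{-1}(K)$ is a closed subset of the compact space $E(\hat{\mathbf{X}}_{\Lambda_0}, \mathbb{R}^d)$ intersected with $E(\mathbf{X}_{\Lambda_0}, \mathbb{R}^d)$. The crucial point from Proposition \ref{prop:ellis} is that $E(\mathbf{X}_{\Lambda_0}, \mathbb{R}^d) = E(\hat{\mathbf{X}}_{\Lambda_0}, \mathbb{R}^d) \setminus \{\bowtie_{\mathbf{X}_{\Lambda_0}}\}$, and since $\chi(\bowtie_{\mathbf{X}_{\Lambda_0}})$ is not defined on $\mathbf{X}_{eq}$ the set $\chi^{-1}(K)$ stays away from $\bowtie_{\mathbf{X}_{\Lambda_0}}$; together with properness of $ev_{\Lambda_0}$ this forces $\chi^{-1}(K)$ to be compact. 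Then $\Pi^{-1}(K) = ev_{\Lambda_0}(\chi^{-1}(K))$ is the continuous image of a compact set, hence compact. This gives properness of $\Pi$.

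Local compactness of $\mathbf{X}_{eq}$ then follows almost formally. Pick any point $\mathfrak{e} \in \mathbf{X}_{eq}$; since $\mathbb{X}_{eq}$ is a compact metric space it is in particular locally compact, so choose a compact neighbourhood $K$ of $\mathfrak{e}$ in the coarser topology of $\mathbb{X}_{eq}$. Because the identity map $\mathbf{X}_{eq} \to \mathbb{X}_{eq}$ is continuous, the fibre structure of $\Pi$ coincides with that of $\pi$, and properness of $\Pi$ lets me pull $K$ back to a compact saturated set in $\mathbf{X}_{\Lambda_0}$, whose image under $\Pi$ is then a compact neighbourhood of $\mathfrak{e}$ in $\mathbf{X}_{eq}$. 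Hence every point of $\mathbf{X}_{eq}$ admits a compact neighbourhood, which is local compactness.

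The main obstacle is the second paragraph: controlling the fibre over the point at infinity. One must verify carefully that no transformation in $\chi^{-1}(K)$ can be $\bowtie_{\mathbf{X}_{\Lambda_0}}$ and, more delicately, that $\chi^{-1}(K)$ does not \emph{accumulate} onto $\bowtie_{\mathbf{X}_{\Lambda_0}}$ within the compact semigroup $E(\hat{\mathbf{X}}_{\Lambda_0}, \mathbb{R}^d)$ — for otherwise the closure of $\chi^{-1}(K)$ in $E(\hat{\mathbf{X}}_{\Lambda_0}, \mathbb{R}^d)$ could fail to lie inside $E(\mathbf{X}_{\Lambda_0}, \mathbb{R}^d)$. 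This is precisely where the Meyer property enters, through Proposition \ref{prop:ellis}: a net of transformations whose $\chi$-images remain in $K$ keeps $\Lambda_0$ (and hence every point of $\mathbf{X}_{\Lambda_0}$) at bounded distance from the origin by the uniform discreteness of the iterated difference sets, so the limit transformation stays in $\mathbf{X}_{\Lambda_0}$ and avoids $\bowtie_{\mathbf{X}_{\Lambda_0}}$. Turning this Meyer-driven uniform-discreteness bound into a genuine compactness statement for $\chi^{-1}(K)$ is the technical heart of the argument.
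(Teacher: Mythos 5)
Your properness argument is circular at its decisive step. You want $\chi^{-1}(K)$ compact, and you propose to get this from Proposition \ref{prop:ellis} plus properness of $ev_{\Lambda_0}$. But $\chi^{-1}(K)=ev_{\Lambda_0}^{-1}\bigl(\Pi^{-1}(K)\bigr)$, so properness of $ev_{\Lambda_0}$ converts compactness of $\Pi^{-1}(K)$ into compactness of $\chi^{-1}(K)$ and vice versa: the two statements are equivalent, and passing through the Ellis semigroup gains nothing. The content you still owe is exactly the statement you are trying to prove, namely that a net in $\mathbf{X}_{\Lambda_0}$ cannot escape every compact set while its $\Pi$-images stay in a compact subset of $\mathbf{X}_{eq}$. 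Proposition \ref{prop:ellis} does not give this: it only says that the single transformation $\bowtie_{\mathbf{X}_{\Lambda_0}}$ is the unique element of $E(\hat{\mathbf{X}}_{\Lambda_0},\mathbb{R}^d)$ sending some (equivalently every) point to $\infty$; it says nothing about whether a net with $\chi$-images in $K$ can accumulate at $\bowtie_{\mathbf{X}_{\Lambda_0}}$. Your sentence ``keeps $\Lambda_0$ at bounded distance from the origin by the uniform discreteness of the iterated difference sets'' is precisely the missing estimate, and it cannot be extracted from Proposition \ref{prop:ellis}; it needs the Meyer property applied to \emph{pairs of strongly regionally proximal} patterns, i.e.\ Proposition \ref{prop:regional.proximality} together with Lemma \ref{lem:differences} and Proposition \ref{prop:meyer}. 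This is what the paper's proof does: it extends the fibre relation of $\Pi$ to the one-point compactification $\hat{\mathbf{X}}_{\Lambda_0}$ by adjoining $(\infty,\infty)$, and shows the extended graph is closed by deriving, for a net of proximal pairs $(\Lambda^1_\lambda,\Lambda^2_\lambda)$ with $\Lambda^1_\lambda$ convergent and $\Lambda^2_\lambda\to\infty$, the containment of the differences $v_\lambda-v_{\lambda'}$ in the uniformly discrete set $8(\underline{\Lambda}_0-\underline{\Lambda}_0)$, forcing $\Lambda^2_\lambda$ to stay in a finite union of translates of $\Xi$. Working with pairs in the graph is also what lets one use compactness of $\hat{\mathbf{X}}_{\Lambda_0}\times\hat{\mathbf{X}}_{\Lambda_0}$ to extract convergent subnets; in your setup a convergent net $\Pi(x_\lambda)$ in the quotient topology cannot in general be lifted to a convergent net of proximal partners, so the estimate has no way to enter.

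Your deduction of local compactness from properness is also flawed: you take a compact neighbourhood $K$ of $\mathfrak{e}$ in $\mathbb{X}_{eq}$ (the coarse, compact topology) and apply properness of $\Pi$ to it, but properness of $\Pi:\mathbf{X}_{\Lambda_0}\to\mathbf{X}_{eq}$ concerns compact subsets of $\mathbf{X}_{eq}$ in the \emph{finer} combinatoric topology; a set compact in $\mathbb{X}_{eq}$ need not be compact in $\mathbf{X}_{eq}$ (the whole space $\mathbb{X}_{eq}$ is compact while $\mathbf{X}_{eq}$ is not, and $\Pi^{-1}(\mathbb{X}_{eq})=\mathbf{X}_{\Lambda_0}$ is not compact). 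Moreover, even granting properness, concluding that $\Pi$ of a compact neighbourhood is a neighbourhood requires a closedness property of $\Pi$ (saturations of closed sets being closed), and Hausdorffness of $\mathbf{X}_{eq}$ is never addressed --- yet you use it implicitly when treating compact subsets of $\mathbf{X}_{eq}$ as closed. The paper's route through $Z:=\hat{\mathbf{X}}_{\Lambda_0}/_{\hat{\sim}}$ settles all of this at once: a quotient of a compact Hausdorff space by a closed equivalence relation is compact Hausdorff, so $Z$ is, and then $\mathbf{X}_{eq}$ is identified with the open subset $Z\backslash\left\lbrace \pi_Z(\infty)\right\rbrace$, which is locally compact, with $\Pi$ proper as the restriction of the quotient map. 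Without some substitute for that argument, both halves of your proposal remain unproved.
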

\vspace{0.2cm}
\begin{proof} Let $\sim $ be the closed $\mathbb{R}^d$-invariant equivalence relation induced by $\Pi$ on $\mathbf{X}_{\Lambda _0}$ with graph $\mathcal{G}(\sim )$ in $\mathbf{X}_{\Lambda _0}\times \mathbf{X}_{\Lambda _0}$. This relation coincides with the strong regional proximality by proposition \ref{prop:regional.proximality}.
Define the relation $\hat{\sim }$ on the one-point compactification $\hat{\mathbf{X}}_{\Lambda _0}$ as the relation of graph $\mathcal{G}(\hat{\sim }):= \mathcal{G}(\sim )\cup \left\lbrace (\infty, \infty )\right\rbrace $ in $\hat{\mathbf{X}}_{\Lambda _0}\times \hat{\mathbf{X}}_{\Lambda _0}$. Clearly, $\hat{\sim }$ is an equivalence relation and is moreover $\mathbb{R}^d$-invariant on $\hat{\mathbf{X}}_{\Lambda _0}$. Let us show that it is closed:

\vspace{0.2cm}
Let $\mathcal{G}(\hat{\sim })\supset (\Lambda ^1_{\lambda }, \Lambda ^2_{\lambda })_\lambda \longrightarrow (x,y)$ be some net converging in $\hat{\mathbf{X}}_{\Lambda _0}\times \hat{\mathbf{X}}_{\Lambda _0}$. We may suppose that $(\Lambda ^1_{\lambda }, \Lambda ^2_{\lambda })\in \mathcal{G}(\sim )$, and for a contradiction we may also suppose that the limit pair $(x,y)$ falls outside the graph $\mathcal{G}(\hat{\sim })$. From the very construction of $\hat{\sim }$ we hence have (up to a switch of $x$ and $y$) $x=\Lambda \in \mathbf{X}_{\Lambda _0}$ and $y= \infty $. Let $r_0$ and $R_0$ be some radii of uniform discretness and relative density of any multiple set of $\mathbf{X}_{\Lambda _0}$. From proposition \ref{prop:regional.proximality} any $\Lambda ^1_{\lambda }$ and $ \Lambda ^2_{\lambda }$ are strongly regionaly proximal, and thus by lemma \ref{lem:differences} satisfy
\begin{align}\label{inclusion1} \underline{\Lambda}^1_{\lambda } - \underline{\Lambda}^2_{\lambda } \subseteq 3(\underline{\Lambda }_0 - \underline{\Lambda } _0)
\end{align}
Now as $\Lambda ^1_\lambda$ converges to $x= \Lambda $ in $\mathbf{X}$ there is some index $\lambda _0$ such that for any indices $\lambda , \lambda'>\lambda_0$ there is some common point $p$ within both $ \Lambda ^1_\lambda, \Lambda ^1_{\lambda '}$ and $\Lambda $, and consequently for any $\lambda , \lambda'>\lambda_0$ one obtains 
\begin{align}\label{inclusion2} \underline{\Lambda}^1_{\lambda } - \underline{\Lambda}^1_{\lambda '} =  \underline{\Lambda}^1_{\lambda } - p +p -\underline{\Lambda}^1_{\lambda '} \subseteq (\underline{\Lambda}^1_{\lambda } - \underline{\Lambda}^1_{\lambda }) + (\underline{\Lambda}^1_{\lambda '} - \underline{\Lambda}^1_{\lambda '}) \subseteq 2(\underline{\Lambda }_0 - \underline{\Lambda } _0)
\end{align}
Let then $\left\lbrace v_\lambda \right\rbrace _\lambda$ be a collection of points each contained into $\Lambda ^2_\lambda  \cap B(0, R_0)$. Then by (\ref{inclusion1}) and (\ref{inclusion2})
\begin{align*}v_\lambda-v_{\lambda '} \in \underline{\Lambda}^2_{\lambda } - \underline{\Lambda}^2_{\lambda '} \subseteq \left[ 3(\underline{\Lambda }_0 - \underline{\Lambda } _0) + \underline{\Lambda}^1_{\lambda }\right] - \left[ 3(\underline{\Lambda }_0 - \underline{\Lambda } _0) + \underline{\Lambda}^1_{\lambda '}\right] \subseteq 8(\underline{\Lambda }_0 - \underline{\Lambda } _0)
\end{align*}

the right term being uniformly discrete by proposition \ref{prop:meyer}. The collection $\left\lbrace v_\lambda \right\rbrace _\lambda $ is consequently uniformly discrete and lies inside the precompact set $B(0, R_0)$, so is a finite set. Taking a finite set of representatives $v_{\lambda _1}, ..., v_{\lambda _l}$ we obtain that the net $( \Lambda ^2_\lambda )_\lambda $ remains into the compact subset $\bigcup _{i=1}^l \Xi + v_{\lambda _i} $ of $\mathbf{X}_{\Lambda _0}$. This contradict the fact that $( \Lambda ^2_\lambda )_\lambda $ conververges to $\infty $, giving that $\hat{\sim }$ is a closed relation.\\
Let $Z:= \hat{\mathbf{X}}_{\Lambda _0}/_{\hat{\sim }}$ with $\hat{\pi }_Z: \hat{\mathbf{X}}_{\Lambda _0}\twoheadrightarrow Z$ onto continuous $\mathbb{R}^d$-equivariant map of compact spaces. It is clear that $Z\backslash \left\lbrace \pi _Z(\infty )\right\rbrace $ is locally compact, and the restriction $\pi _Z: \mathbf{X}_{\Lambda _0} \twoheadrightarrow Z\backslash \left\lbrace \pi _Z(\infty )\right\rbrace$ is continuous, onto and proper. But from the construction of $\hat{\sim }$ we have $Z\backslash \left\lbrace \pi _Z(\infty )\right\rbrace$ under bijective correspondance with $\mathbf{X}_{eq}$, so by definition of the quotient topology we get a continuous bijective map $i: \mathbf{X}_{eq}\longrightarrow Z\backslash \left\lbrace \pi _Z(\infty )\right\rbrace $. The map $i$ is proper: for, if $K$ is compact in $\mathbf{X}_{eq}$, its preimage $(\Pi)^{-1}(K)$ writes as $ (\pi _Z)^{-1}(i(K))$, so is compact. It follows that $i$ is an homeomorphism, and thus $\mathbf{X}_{eq}$ is a locally compact space and the map $\Pi$ is proper.
\end{proof}

Now we can establish the proof of theorem \ref{theo:loc.compact}:

\begin{proof} From corrolary \ref{cor:big.enough} together with proposition \ref{prop:loc.compact} we now that the morphism
\begin{align*}\begin{psmatrix}[colsep=1.5cm,
rowsep=0cm]
\chi : E(\mathbf{X}_{\Lambda _0}, \mathbb{R}^d)  & \mathbf{X}_{eq} &
\psset{arrows=->,linewidth=0.4pt, labelsep=1.5pt
,nodesep=0pt}
\ncline{1,1}{1,2}
\end{psmatrix}  
\end{align*}
is onto and proper between locally compact spaces.
Let now $(v_\lambda )_\lambda $ be a net in $\mathbf{X}_{eq}$ converging to some $v$, and consider $w\in \mathbf{X}_{eq}$: we have to show that $v_\lambda +w$ converges to $v+w$ in $\mathbf{X}_{eq}$. This is equivalent to the fact that each subnet $v_{\lambda '}+w$ accumulates at $v+w$. Thus let us consider a subnet $v_{\lambda '}+w$. Then $v_{\lambda '}$ still converges to $v$. If we consider $(g_{\lambda '})_{\lambda '}$ and $h$ to be liftings in $E(\mathbf{X}_{\Lambda _0}, \mathbb{R}^d )$ of the subnet and $w$ respectively, then for any compact neighborhood $V$ of $v$ the net $(g_{\lambda '})_{\lambda '}$ eventually lies into the compact set $\chi ^{-1}(V)$. Consequently we may select an accumulation point $g$ (that is, a transformation lying into $\bigcap _{\lambda _0} \overline{\left\lbrace g_{\lambda '}\right\rbrace _{\lambda '>\lambda _0}}$ contained into $\bigcap _{V\ni v}\chi ^{-1}(V)= \chi ^{-1}(v)$, so that $\chi (g)=v$. Since $(g_{\lambda '})_{\lambda '}$ accumulates at $g$ the net $(h.g_{\lambda '})_{\lambda '}$, as image of the net $(g_{\lambda '})_{\lambda '}$ through the continuous map $s\mapsto h.s$ of $E(\mathbf{X}_{\Lambda _0}, \mathbb{R}^d)$, accumulates at $h.g$. Taking image under the continuous morphism $\chi $ we obtain that $\chi (h.g_{\lambda '})= w+ v_{\lambda '}$ accumulates at $\chi (h.g)=w+v$ in $\mathbf{X}_{eq}$. This shows that addition is right-continuous on $\mathbf{X}_{eq}$, and since it is Abelian, addition is then separately continuous. It implies by \cite{Au}, since the topology on $\mathbf{X}_{eq}$ is locally compact, that addition is jointly continuous and inversion is continuous as well, so that $\mathbf{X}_{eq}$ is a locally compact Abelian group.
\end{proof}

\vspace{0.2cm}
\section{The internal system of a hull of point patterns}

Given a point pattern $\Lambda _0$ of $\mathbb{R}^d$, its generated group is the countable subgroup $\langle \underline{\Lambda }_0\rangle$ of $\mathbb{R}^d$ generated by the support of $\Lambda _0$. It naturally contains the diffrence set $ \underline{\Lambda }_0-  \underline{\Lambda }_0$, and thus the difference set $ \underline{\Lambda }- \underline{\Lambda }$ of any other point pattern $\Lambda \in \mathbb{X}_{\Lambda _0}$. From now on, we shall consider a subgroup 
\begin{align*} \langle \underline{\Lambda }_0\rangle \leqslant \Gamma \leqslant \mathbb{R}^d
\end{align*}
endowed with discrete topology, which we keep fixed. We don't assume any countability assumtion, so in particular it may be the entire Euclidean space.

%Given a Delone multiple set $\Lambda _0$ of finite local complexity, we call the set of differences $\Lambda _0-\Lambda _0$ the \textit{return times set} of $\Lambda _0$. Any multiple set $\Lambda$ in $\mathbb{X}_{\Lambda _0}$ has its return times set included into that of $\Lambda _0$, and under the repetitivity assumption this set is independent of the chosen multiple set in the hull. We call the subgroup $\Gamma ^\mathcal{R}:= <\Lambda _0-\Lambda _0>$ generated by these differences the \textit{return times group} of $\Lambda _0$.
\vspace{0.2cm}
\begin{de}\label{def:discrete.subsystem} Given a point pattern $\Lambda _0$ with combinatoric hull $\mathbf{X}_{\Lambda _0}$, the subsystem associated with $\Gamma$ is the space with combinatoric metric and restricted $\Gamma$-action
\begin{align*}\Xi ^{\Gamma }:= \left\lbrace \Lambda \in \mathbb{X}_{\Lambda _0} \; \vert \; \underline{\Lambda } \subset \Gamma \right\rbrace 
\end{align*}

\end{de}
\vspace{0.2cm}
It is easy to observe that $\Xi ^\Gamma$ remains stable under the $\Gamma $-action on $\mathbf{X}_{\Lambda _0}$, and is by homeomorphisms, providing so a dynamical system $(\Xi ^\Gamma, \Gamma )$. From the very choice of $\Gamma$ the point pattern $\Lambda _0$ lies in $\Xi ^\Gamma$, and it can be verified that its $\Gamma$-orbit is dense in this latter space. The dynamical system $(\Xi ^\Gamma, \Gamma )$ is minimal if and only if $\Lambda _0$ is repetitive.

\vspace{0.2cm} 
\begin{lem}\label{lem:subsystem} The transversal $\Xi$ is a clopen subset of the space $\Xi ^\Gamma$, which is in turns a clopen subset of the combinatoric hull $\mathbf{X}_{\Lambda _0}$.
\end{lem}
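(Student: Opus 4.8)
The plan is to exploit the rigidity of the difference set so that membership in $\Xi^\Gamma$ becomes a condition readable from an arbitrarily small neighbourhood of the origin, and then deduce both clopenness statements. The starting point is the inclusion $\underline{\Lambda}-\underline{\Lambda}\subseteq \underline{\Lambda}_0-\underline{\Lambda}_0$, valid for every $\Lambda\in\mathbf{X}_{\Lambda_0}$ since any local configuration of $\Lambda$ occurs in $\Lambda_0$; this is exactly the inclusion recorded at the opening of the section, giving $\underline{\Lambda}-\underline{\Lambda}\subseteq\langle\underline{\Lambda}_0\rangle\leqslant\Gamma$. Two consequences follow immediately. First, if $0\in\underline{\Lambda}$ then $\underline{\Lambda}=\underline{\Lambda}-0\subseteq\underline{\Lambda}-\underline{\Lambda}\subseteq\Gamma$, so that $\Xi\subseteq\Xi^\Gamma$ and the first assertion even makes sense. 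Second, for any $\Lambda\in\mathbf{X}_{\Lambda_0}$ one has the dichotomy that either $\underline{\Lambda}\subseteq\Gamma$ or $\underline{\Lambda}\cap\Gamma=\emptyset$: indeed if some $p\in\underline{\Lambda}$ lies in $\Gamma$, then every $q\in\underline{\Lambda}$ satisfies $q=(q-p)+p\in\Gamma$. Hence $\Lambda\in\Xi^\Gamma$ if and only if $\underline{\Lambda}$ meets $\Gamma$ in at least one point.

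With this characterization I would prove that $\Xi^\Gamma$ is clopen in $\mathbf{X}_{\Lambda_0}$. Fix $\Lambda\in\mathbf{X}_{\Lambda_0}$ and choose, using relative density, a radius $R_0$ with $\underline{\Lambda}\cap B(0,R_0)\neq\emptyset$. Any $\Lambda'$ with $\mathsf{d}(\Lambda,\Lambda')<\tfrac{1}{1+R_0}$ agrees with $\Lambda$ on $B(0,R_0)$, so that $\underline{\Lambda'}\cap B(0,R_0)=\underline{\Lambda}\cap B(0,R_0)$. If $\Lambda\in\Xi^\Gamma$ this common patch contains a point of $\Gamma$, whence $\Lambda'\in\Xi^\Gamma$ by the dichotomy; while if $\Lambda\notin\Xi^\Gamma$ the same patch contains only points outside $\Gamma$, whence $\Lambda'\notin\Xi^\Gamma$. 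Thus both $\Xi^\Gamma$ and its complement are open, which settles the second assertion.

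For the first assertion I would combine the above with the fact, recorded just after the definition of the combinatoric metric, that $\Xi$ is clopen in $\mathbf{X}_{\Lambda_0}$ (openness because $0\in\underline{\Lambda}$ persists on a small central ball, closedness because $0\notin\underline{\Lambda}$ forces, by uniform discreteness, an empty central ball, each detected on a fixed $B(0,R)$). Since $\Xi\subseteq\Xi^\Gamma$, the set $\Xi=\Xi\cap\Xi^\Gamma$ is the trace on $\Xi^\Gamma$ of a clopen subset of $\mathbf{X}_{\Lambda_0}$, hence clopen in the subspace $\Xi^\Gamma$. I expect the only genuinely substantive point to be the local detectability of membership in $\Xi^\Gamma$; once the difference-set dichotomy reduces it to inspecting a single bounded patch, everything else is a routine continuity-of-restriction argument in the ultrametric topology, and no Meyer hypothesis is needed.
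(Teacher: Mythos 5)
Your proof is correct and follows essentially the same route as the paper's: the key mechanism in both is the inclusion $\underline{\Lambda}'-\underline{\Lambda}'\subseteq\langle\underline{\Lambda}_0\rangle\leqslant\Gamma$, valid for every $\Lambda'$ in the hull, which makes membership in $\Xi^\Gamma$ readable off a single support point near the origin (the paper packages this as the neighborhood $\Xi+\gamma\subseteq\Xi^\Gamma$ for $\gamma\in\underline{\Lambda}$, you as a dichotomy $\underline{\Lambda}\subseteq\Gamma$ or $\underline{\Lambda}\cap\Gamma=\emptyset$). The only cosmetic difference is that the paper proves closedness of $\Xi^\Gamma$ by a direct net argument, while you show the complement is open using relative density; both steps are routine and equivalent in content.
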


\vspace{0.2cm}

\begin{proof} The fact that $\Xi$ is a clopen subset of the space $\Xi ^\Gamma$ is obvious. Now the space $\Xi ^{\Gamma }$ is closed in $\mathbf{X}_{\Lambda _0}$ since any net of $\mathbf{X}_{\Lambda _0}$ supported on $\Gamma$ and converging for the combinatoric metric admits a limit point pattern also supported on $\Gamma$. On the other hand, for each $\Lambda \in \Xi ^\Gamma$ the set $\Xi + \gamma $, with $\gamma $ some point taken into then support of $\Lambda$, is an open neighborhood of $\Lambda$ in $\mathbf{X}_{\Lambda _0}$ which is included in $\Xi ^\Gamma $: For if $\Lambda '\in \Xi + \gamma $ then $\underline{\Lambda } ' = \underline{\Lambda }'-\gamma + \gamma \subset (\underline{\Lambda }' - \underline{\Lambda }') + \underline{\Lambda } \subset \langle \underline{\Lambda }_0\rangle +\Gamma = \Gamma$. Thus $\Xi ^{\Gamma }$ is also open in $\mathbf{X}_{\Lambda _0}$.
\end{proof}

The following result is more or less folklore: It asserts that under the finite local complexity assumption the hull $\mathbb{X}_{\Lambda _0}$ is the $\mathbb{R}^d$-\textit{suspension} of the subsystem $(\Xi ^{\Gamma }, \Gamma )$:
\vspace{0.2cm}
\begin{prop}\label{prop:suspension} If $\Lambda _0$ is of finite local complexity, there is a topological conjugacy
\begin{align*} \begin{psmatrix}[colsep=1cm,
rowsep=0cm]
 \Xi ^\Gamma  \times _\Gamma \mathbb{R}^d \; \; & \; \; \mathbb{X} \\
\psset{arrows=->>,linewidth=0.2pt, labelsep=1pt
,nodesep=0pt}
\ncline{1,1}{1,2}
 [\Lambda ,t ]_\Gamma \; \; & \; \; \Lambda -t 
 \psset{arrows=|->,linewidth=0.2pt, labelsep=1pt
,nodesep=0pt}
\ncline{2,1}{2,2}
\end{psmatrix}
\end{align*}
where the left hand side is the quotient of $\Xi ^\Gamma  \times  \mathbb{R}^d$ under the diagonal $\Gamma$-action $(\Lambda , t).\gamma := (\Lambda +\gamma , t+\gamma )$, and equiped with an $\mathbb{R}^d$-action via $[\Lambda , t]_\Gamma .t':= [\Lambda , t+t']_\Gamma$.
\end{prop}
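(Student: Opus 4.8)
The plan is to exhibit the stated assignment as a well-defined, equivariant, continuous bijection and then promote it to a homeomorphism by a compactness argument. Write $\phi(\Lambda,t):=\Lambda-t$ on $\Xi^\Gamma\times\mathbb{R}^d$, with descent $\bar\phi$ to the quotient. \emph{Well-definedness} is immediate since replacing $(\Lambda,t)$ by $(\Lambda+\gamma,t+\gamma)$ leaves $\Lambda-t$ unchanged, and \emph{equivariance} follows from $[\Lambda,t]_\Gamma.t'=[\Lambda,t+t']_\Gamma\mapsto \Lambda-(t+t')=(\Lambda-t)-t'$, which is the image of $[\Lambda,t]_\Gamma$ translated by $t'$ in $\mathbb{X}$. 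For \emph{continuity} I would exploit that the combinatoric topology on $\Xi^\Gamma\subseteq\mathbf{X}_{\Lambda_0}$ is finer than the local topology, so the inclusion $\Xi^\Gamma\hookrightarrow\mathbb{X}_{\Lambda_0}$ is continuous; composing with the jointly continuous $\mathbb{R}^d$-action $(\Lambda,t)\mapsto\Lambda-t$ on the local hull shows $\phi$ is continuous from $\Xi^\Gamma\times\mathbb{R}^d$ into $\mathbb{X}$, hence $\bar\phi$ is continuous on the quotient.

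Next I would establish \emph{bijectivity}, which is precisely where the hypothesis $\langle\underline{\Lambda}_0\rangle\leqslant\Gamma$ is used. For surjectivity, given $\Lambda'\in\mathbb{X}$ pick a site $p\in\underline{\Lambda'}$; the support of $\Lambda'-p$ is $\underline{\Lambda'}-p\subseteq\underline{\Lambda'}-\underline{\Lambda'}\subseteq\underline{\Lambda}_0-\underline{\Lambda}_0\subseteq\Gamma$, so $\Lambda:=\Lambda'-p$ lies in $\Xi^\Gamma$ (indeed in $\Xi$, as $0\in\underline{\Lambda}$) and $\bar\phi([\Lambda,-p]_\Gamma)=\Lambda+p=\Lambda'$. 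For injectivity, if $\Lambda_1-t_1=\Lambda_2-t_2$ with $\Lambda_1,\Lambda_2\in\Xi^\Gamma$, then $\underline{\Lambda_1}=\underline{\Lambda_2}+(t_1-t_2)$; comparing a site of each support, both of which lie in $\Gamma$, forces $\gamma:=t_1-t_2\in\Gamma$, whence $(\Lambda_1,t_1)=(\Lambda_2+\gamma,t_2+\gamma)$ and the two classes coincide.

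The crux, and the step I expect to be the main obstacle, is \emph{compactness} of the suspension $\Xi^\Gamma\times_\Gamma\mathbb{R}^d$; note that $\Xi^\Gamma$ itself is only locally compact in the combinatoric topology, so compactness must come from the quotient. Here finite local complexity gives that $\Xi$ is compact, and by Lemma~\ref{lem:subsystem} it is clopen in $\Xi^\Gamma$. I would show every class has a representative $(\Lambda,t)$ with $\Lambda\in\Xi$ and $t$ in a fixed closed ball: translating a support point to the origin yields a representative with $\Lambda\in\Xi$, and relative density of $\underline{\Lambda}$ with some radius $R_0$ furnishes a site $q\in\underline{\Lambda}\subseteq\Gamma$ with $|t-q|\leqslant R_0$, so replacing $(\Lambda,t)$ by $(\Lambda-q,t-q)$ via the $\Gamma$-action keeps the first coordinate in $\Xi$ while confining the second to $\overline{B(0,R_0)}$. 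Thus the restriction of the quotient map to the compact set $\Xi\times\overline{B(0,R_0)}$ is onto the suspension, which is therefore compact. Since a continuous bijection from a compact space onto a Hausdorff space (here the metric hull $\mathbb{X}$) is a homeomorphism, $\bar\phi$ is a homeomorphism, and being $\mathbb{R}^d$-equivariant it is the desired topological conjugacy.
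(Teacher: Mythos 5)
Your proposal is correct and follows essentially the same route as the paper's own proof: the same map $(\Lambda,t)\mapsto \Lambda-t$, the same surjectivity argument (translating a support point to the origin, using $\langle\underline{\Lambda}_0\rangle\leqslant\Gamma$ to land in $\Xi\subset\Xi^\Gamma$), the same injectivity argument forcing $t_1-t_2\in\Gamma$, and the identical compactness step exhibiting the suspension as the image of $\Xi\times\overline{B}(0,R_0)$ before invoking that a continuous bijection from a compact space to a Hausdorff space is a homeomorphism. Your added remarks on continuity (the combinatoric topology being finer than the local one) and the explicit verification that $\Xi\subset\Xi^\Gamma$ merely flesh out steps the paper treats as immediate.
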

\vspace{0.2cm}
\begin{proof} Consider the map $\Xi ^\Gamma  \times\mathbb{R}^d \longrightarrow \mathbb{X}_{\Lambda _0}$ which to any pair $(\Lambda ,t)$ associates $\Lambda -t$. It is clearly continuous, and is $\mathbb{R}^d$-equivariant if on the respective spaces the $\mathbb{R}^d$-actions writes $(\Lambda ,t).t':= (\Lambda , t+t')$ and $\Lambda .t:= \Lambda -t$. Moreover, if for a given $\Lambda \in \mathbb{X}_{\Lambda _0}$ we take a vector $t$ such that $\Lambda +t$ contains $0$ in its support then $\Lambda = (\Lambda +t)-t$ with $\Lambda +t \in \Xi \subset \Xi ^\Gamma$, which shows that this map is onto. It is also invariant under the $\Gamma$- diagonal action, so taking the quotient under this $\Gamma $-action provides the continous, onto and $\mathbb{R}^d$-equivariant map of the statement. This map is 1-to-1: for ,if $\Lambda -t= \Lambda '-t'$ then $t'-t\in \underline{\Lambda }'-\underline{\Lambda } \subset \Gamma - \Gamma = \Gamma $, and thus $[\Lambda ,t ]_\Gamma = [\Lambda +(t'-t),t +(t'-t)]_\Gamma = [\Lambda ',t' ]_\Gamma$.
To show that it is a homeomorphism it suffices to show that the quotient space $\Xi ^\Gamma  \times _\Gamma \mathbb{R}^d$ is compact. Let $\Xi $ be the canonical transversal of the hull $\mathbb{X}_{\Lambda _0}$. It is a compact subset of $\Xi ^\Gamma $, and if $R_0$ stands for some radius of relative density for each multi-point set in $\mathbb{X}_{\Lambda _0}$ then $\Xi ^\Gamma  \times _\Gamma \mathbb{R}^d = \left[ \Xi \times \overline{B}(0,R_0)\right] _\Gamma $: for, any $(\Lambda ,t)\in \Xi ^\Gamma \times \mathbb{R}^d $ has some $\gamma \in \Lambda \cap B(t,R_0)$, yielding $(\Lambda -\gamma , t-\gamma) \in \Xi \times \overline{B}(0,R_0)$ with $[\Lambda -\gamma , t-\gamma]_\Gamma =  [\Lambda ,t ]_\Gamma $. This shows that the space $\Xi ^\Gamma  \times _\Gamma \mathbb{R}^d$ is compact, completing the proof.
\end{proof}

\vspace{0.2cm}
%An important fact is that if the point pattern $\Lambda _0$ is a  repetitivity, is \textit{saturated} over the space $\mathbf{X}_{eq}$:
%\vspace{0.2cm}
%\begin{prop}\label{prop:saturated} Let $\Lambda $ be a repetitive Meyer multiple set of $\mathbb{R}^d$, and let $\Gamma$ be a subgroup of $\mathbb{R}^d$ containing the return times group $\Gamma ^{\mathcal{R}}$. Then the associated subsystem $\Xi ^\Gamma $ is saturated over $\mathbf{X}_{eq}$, that is, we have $\Xi ^\Gamma = \Pi ^{-1}(\Pi (\Xi ^\Gamma ))$.
%\end{prop}
%\vspace{0.2cm}
%\begin{proof} Let $\Lambda $ be chosen in $ \Pi^{-1}(\Pi (\Xi ^\Gamma ))$. Then there exists some $\Lambda '\in \Xi ^{\Gamma }$ with $\Pi(\Lambda )= \Pi(\Lambda ')$, and so $\Lambda $ and $\Lambda '$ are strongly regionaly proximal. By lemma \ref{lem:differences} included in $  \Gamma ^\mathcal{R} +  \Gamma ^\mathcal{R} +\Gamma ^\mathcal{R}+ \Gamma = \Gamma $ so that $\Lambda \in \Xi ^{\Gamma }$.
%\end{proof}

\section{Construction of a cut $\&$ project scheme}

All along this section we assume that $\Lambda _0$ is a repetitive Meyer multiple set of $\mathbb{R}^d$, and we let $\Gamma$ as before. Recall the existence of the group morphism (\ref{group.morphism}) from $\mathbb{R}^d$ into the locally compact Abelian group $\mathbf{X}_{eq}$. We now set the ingredients in order to construct a cut $\&$ project scheme.
 
\vspace{0.2cm}
\begin{prop}\label{prop:internal.group} The set $H:= \Pi (\Xi ^\Gamma)$ is a clopen locally compact subgroup of $\mathbf{X}_{eq}$, such that $H= \overline{\;  \Gamma ^*\; }^{ \mathbf{X}_{eq}}$.
\end{prop}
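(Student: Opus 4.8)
The plan is to reduce everything to a single set-theoretic identity, namely that the $\Pi$-saturation of $\Xi^\Gamma$ is $\Xi^\Gamma$ itself:
\begin{align*} \Pi^{-1}\big(\Pi(\Xi^\Gamma)\big) = \Xi^\Gamma . \end{align*}
Granting this, the clopen character of $H$ is immediate, since $\mathbf{X}_{eq}$ carries the quotient topology: a subset of $\mathbf{X}_{eq}$ is open (resp.\ closed) precisely when its $\Pi$-preimage is open (resp.\ closed) in $\mathbf{X}_{\Lambda _0}$, and $\Xi^\Gamma$ is clopen by Lemma \ref{lem:subsystem}. Local compactness of $H$ then follows at once from that of $\mathbf{X}_{eq}$ (Theorem \ref{theo:loc.compact}), a clopen subset of a locally compact (Hausdorff) space being locally compact.

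First I would establish the inclusion $\Pi^{-1}(\Pi(\Xi^\Gamma)) \subseteq \Xi^\Gamma$, the reverse being trivial. So let $\Lambda$ satisfy $\Pi(\Lambda) = \Pi(\Lambda ')$ for some $\Lambda ' \in \Xi^\Gamma$. By Proposition \ref{prop:regional.proximality} the patterns $\Lambda$ and $\Lambda '$ are strongly regionally proximal, whence Lemma \ref{lem:differences} yields $\underline{\Lambda } - \underline{\Lambda }' \subseteq 3(\underline{\Lambda }_0 - \underline{\Lambda }_0)$. Since $\underline{\Lambda }_0 \subseteq \Gamma$ and $\Gamma$ is a group, the right-hand side lies in $\Gamma$; choosing any $\gamma ' \in \underline{\Lambda }'$ (nonempty as $\Lambda '$ is Delone) one writes each $\gamma \in \underline{\Lambda }$ as $\gamma = (\gamma - \gamma ') + \gamma '$, a sum of two elements of $\Gamma$, so that $\underline{\Lambda } \subseteq \Gamma$, i.e.\ $\Lambda \in \Xi^\Gamma$. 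This step is the crux of the argument and is exactly where the Meyer hypothesis is consumed: it enters both through Proposition \ref{prop:regional.proximality}, which identifies the equicontinuous structure relation with strong regional proximality, and through the uniform discreteness of iterated differences underlying Lemma \ref{lem:differences}.

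It remains to identify $H$ algebraically. The group morphism (\ref{group.morphism}) restricts on $\Gamma$ to $\gamma \mapsto \gamma ^*$, and the equivariance relation $\Pi(\Lambda _0 + \gamma) = \Pi(\Lambda _0) + \gamma ^* = \gamma ^*$ (recall $\Pi(\Lambda _0) = \mathfrak{o}$ is the unit) shows that $\Pi$ carries the $\Gamma$-orbit of $\Lambda _0$ onto $\Gamma ^*$; in particular $\Gamma ^* \subseteq H$. As this orbit is dense in $\Xi^\Gamma$ and the restriction $\Pi \colon \Xi^\Gamma \twoheadrightarrow H$ is a continuous surjection, its image $\Gamma ^*$ is dense in $H$. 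Since $H$ is closed by the first paragraph and contains $\Gamma ^*$, we have $\overline{\Gamma ^*}^{\mathbf{X}_{eq}} \subseteq H$, while density of $\Gamma ^*$ in $H$ gives the reverse inclusion; hence $H = \overline{\Gamma ^*}^{\mathbf{X}_{eq}}$. Finally $\Gamma ^*$ is a subgroup of $\mathbf{X}_{eq}$, being the homomorphic image of $\Gamma$ under (\ref{group.morphism}), so its closure $H$ is a closed subgroup, completing the proof. The only genuine difficulty is the preimage identity above; the remaining assertions are formal consequences of the quotient-topology description of $\mathbf{X}_{eq}$ and of standard facts on closures of subgroups in topological groups.
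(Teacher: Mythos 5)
Your proof is correct and follows essentially the same route as the paper: the crux --- that $\Xi^\Gamma$ is $\Pi$-saturated --- is established exactly as in the paper via Proposition \ref{prop:regional.proximality} and Lemma \ref{lem:differences}, and the clopen and locally compact group structure then follow by the same formal arguments. The only inessential difference is at the last step, where the paper obtains $H=\overline{\;\Gamma^*\;}^{\mathbf{X}_{eq}}$ from minimality of the $\Gamma$-action on $\Xi^\Gamma$ (repetitivity), whereas you use density of the $\Gamma$-orbit of $\Lambda_0$ in $\Xi^\Gamma$ together with continuity and surjectivity of $\Pi\vert_{\Xi^\Gamma}$; both are immediate.
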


\vspace{0.2cm}
\begin{proof} Let us show first that $\Xi ^{\Gamma } $ is saturated over the set $H$ with respect to the mapping $\Pi$: For if $\Lambda $ lies in $\Xi ^\Gamma$ then any $\Lambda '$ with $\Pi (\Lambda ) = \Pi (\Lambda ')$ must be strongly regionally proximal to $\Lambda $ from proposition \ref{prop:regional.proximality}, and thus by lemma \ref{lem:differences} one gets that $\underline{\Lambda }' \subset 3(\underline{\Lambda }_0 - \underline{\Lambda }_0) + \underline{\Lambda } \subset \langle \underline{\Lambda }_0\rangle +\Gamma = \Gamma$. This shows that $\Lambda ' \in \Xi ^\Gamma$, as wished. Now as the space $\Xi ^\Gamma$ is clopen in $\mathbf{X}_{\Lambda _0}$ we obtains, by definition of the quotient topology, that $H$ is also clopen in $\mathbf{X}_{eq}$. Since $\Xi ^\Gamma$ is $\Gamma$-invariant and $\Pi$ is $\mathbb{R}^d$-equivariant we thus obtain that $H$ is $\Gamma$-invariant in $\mathbf{X}_{\Lambda _0}$ as well. As $\Lambda _0$ is supposed repetitive the $\Gamma$-action is minimal on $\Xi ^\Gamma$, and thus the translation action of $\Gamma$ on $H$ is also minimal. Since $\Lambda _0 \in \Xi ^\Gamma$ from the very choice of $\Gamma$, the neutral element $\mathfrak{o} = \Pi (\Lambda _0)$ of the group $\mathbf{X}_{\Lambda _0}$ lies in $H$. It comes by minimality that $H= \overline{\;  \mathfrak{o}.\Gamma \; }^{ \mathbf{X}_{eq}}= \overline{\;  \Gamma ^*\; }^{ \mathbf{X}_{eq}}$. Therefore $H$ is the closure of a subgroup of $\mathbf{X}_{eq}$, and thus is itself a locally compact Abelian group.
\end{proof}

\vspace{0.2cm}
From this process we have extracted a locally compact Abelian group which will be the internal space for our construction of cut $\&$ project scheme. Note that if we consider two different groups with $\Gamma \leqslant \Gamma '$ then we get two different internal spaces, say $H^\Gamma$ and $H^{\Gamma '}$, in $\mathbf{X}_{eq}$. It is clear from the previous proposition that $H^\Gamma$ appears as a clopen subgroup of $H^{\Gamma '}$. The two extreme cases about this construction are when $\Gamma = \langle \underline{\Lambda }_0\rangle$ with associated locally compact Abelian group $H^\mathfrak{o}$, and when $\Gamma = \mathbb{R}^d$ which in this case gives rise to the full locally compact Abelian group $\mathbf{X}_{eq}$. Each resulting $H^\Gamma$ is then interpolated by these two groups.

\vspace{0.2cm}

%One may remark that the arguments of the previous proof also show that
%\begin{align}\label{stabilisateur.*-map} H^\Gamma \cap (\mathbb{R}^d)^*= \Gamma ^*
%\end{align}

% In order to show that $H^{\Gamma }$ is closed in $\mathbf{X}_{eq}$ it is sufficient to show the equality $$(H^{\Gamma })^c= \bigcup _{t\in \mathbb{R}^d\backslash \Gamma }H^{\Gamma }+ t^*$$
%First, as for each $\Lambda $ we may find some $\Lambda '\in \Xi ^{\Gamma } $ and a vector $t$ with $\Lambda '+t=\Lambda $ we get the equality $\mathbf{X}=\bigcup _{t\in \mathbb{R}^d}\Xi ^{\Gamma }+ t$. Now if $\Lambda $ is itself within $ \Xi ^{\Gamma }$ then the condition $\Lambda -t \in \Xi ^{\Gamma }$ is equivalent to $t\in \Gamma -\Gamma = \Gamma$, giving $(\Xi ^{\Gamma })^c= \bigcup _{t\in \mathbb{R}^d\backslash \Gamma }\Xi ^{\Gamma }+ t$. Since $\Pi$ is $\mathbb{R}^d$-equivariant, any $\Xi ^{\Gamma }+ t$ is saturated above $H^{\Gamma }+ t^*$, giving the desired equality. This shows that $H^{\Gamma }$ has an open complementary set in $\mathbf{X}_{eq}$, and so is closed.\\

\begin{de}\label{def*-map} The restriction $\Gamma \longrightarrow H$ of the morphism (\ref{group.morphism}) on $\Gamma $ is the *-map associated to $\Gamma$.
\end{de}

\vspace{0.2cm}
From proposition \ref{prop:internal.group} the *-map as defined here takes its values in $H$ and has dense range in this latter. We will generically write $\gamma$ for an element of $\Gamma$, as well as $\gamma ^*$ for its image under the *-map. Having constructed an internal space and a $*$-map as well, we now need to find out a window in this internal space. This may be done by considering for each index $i\in I$ the associated transversal $\Xi _i:= \left\lbrace \Lambda \in \mathbb{X}_{\Lambda _0} \, \vert \, 0\in \Lambda _i \right\rbrace $ of the hull. Each is compact and open in $\mathbf{X}_{\Lambda _0}$, and is contained in the canonical transversal $\Xi$ and thus in the subsystem $\Xi ^\Gamma$. We thus consider the compact subsets of $H$
\begin{align*} W_i:= -\Pi (\Xi _i) \qquad \forall \; i\in I
\end{align*}

% We point out that the kernel $\mathcal{K}$ of the *-map is so that
%\begin{align}\label{noyau}\mathcal{K}\subset \langle \underline{\Lambda }_0\rangle
%\end{align}
%For, if $\gamma \in \mathcal{K}$ then $\pi (\Lambda _0+\gamma )= \pi (\Lambda _0)+\gamma ^*= \pi (\Lambda _0)$ and so by proposition \ref{prop:regional.proximality} $\Lambda _0+\gamma $ and $\Lambda _0$ are strongly regionaly proximal. Thus from lemma \ref{lem:differences} $\gamma \in (\underline{\Lambda }_0 +\gamma) -\underline{\Lambda }_0 \subset 3(\underline{\Lambda }_0-\underline{\Lambda }_0) \subset \langle \underline{\Lambda }_0\rangle$, as desired.

\begin{theo}\label{theo:construction.CPS} The data $(H, \Sigma , \mathbb{R}^d)$ with associated diagram
\begin{align*}\begin{psmatrix}[colsep=1.5cm,
rowsep=0cm]
H \; \; & \; \; H \times \mathbb{R}^d  \; \; & \; \; \mathbb{R}^d \\
\cup \; \; & \; \;  \cup \; \; & \; \; \cup \\
\Gamma ^* \; \; & \; \; \Sigma  \; \; & \; \; \Gamma 
\psset{arrows=->>,linewidth=0.2pt, labelsep=1.3pt
,nodesep=0pt}
\ncline{1,2}{1,3}
\psset{arrows=<->,linewidth=0.2pt, labelsep=1.3pt
,nodesep=0pt}
\ncline{3,2}{3,3}
\psset{arrows=<<-,linewidth=0.2pt, labelsep=1.3pt
,nodesep=0pt}
\ncline{1,1}{1,2}
\ncline{3,1}{3,2}
\end{psmatrix}\end{align*}
where $\Sigma := \left\lbrace (\gamma ^*, \gamma ) \in H \times \mathbb{R}^d \; \vert \, \gamma \in \Gamma \right\rbrace $, is a cut $\&$ project scheme, with quotient $H \times _{\Sigma }\mathbb{R}^d$ isomorphic, as compact group, with $\mathbb{X}_{eq}$. Moreover, the family $\left\lbrace  W_i \right\rbrace _{i\in I}$ is a window in $H$.
\end{theo}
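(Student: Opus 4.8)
The plan is to verify the three defining conditions of Definition \ref{CPS} separately, then identify the quotient with $\mathbb{X}_{eq}$, and finally check the window axioms of Definition \ref{defwindow}. The conditions on the projections are almost formal: by construction $\Sigma$ is the graph of the $*$-map $\gamma\mapsto\gamma^*$, so the projection $(\gamma^*,\gamma)\mapsto\gamma$ is a bijection $\Sigma\to\Gamma$, and the density of $\Gamma^*$ in $H$ is exactly the content of Proposition \ref{prop:internal.group}, which gives $H=\overline{\Gamma^*}$. The substance is therefore that $\Sigma$ is a lattice, i.e. discrete and co-compact, together with the identification $H\times_\Sigma\mathbb{R}^d\simeq\mathbb{X}_{eq}$; these will be obtained together.

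The backbone is the continuous group morphism $\phi:H\times\mathbb{R}^d\to\mathbb{X}_{eq}$, $(h,t)\mapsto h-t^*$, continuous because the inclusion $H\hookrightarrow\mathbf{X}_{eq}$ followed by the continuous identity $\mathbf{X}_{eq}\to\mathbb{X}_{eq}$ is continuous and $t\mapsto t^*$ is continuous into $\mathbb{X}_{eq}$. I would first compute $\ker\phi=\Sigma$. The inclusion $\Sigma\subseteq\ker\phi$ is immediate. For the reverse, $\phi(h,t)=0$ forces $t^*=h\in H$; writing $t^*=\Pi(\Lambda_0+t)$ and invoking that $\Xi^\Gamma$ is saturated under $\Pi$ (the key observation in the proof of Proposition \ref{prop:internal.group}), membership $t^*\in H=\Pi(\Xi^\Gamma)$ yields $\Lambda_0+t\in\Xi^\Gamma$, hence $t\in\Gamma$ and $(h,t)=(\gamma^*,\gamma)\in\Sigma$. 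Surjectivity of $\phi$ follows since the factor map onto $\mathbb{X}_{eq}$ is onto and, by the suspension description $\mathbb{X}_{\Lambda_0}\simeq\Xi^\Gamma\times_\Gamma\mathbb{R}^d$ of Proposition \ref{prop:suspension}, every point writes as $\Lambda-t$ with $\Lambda\in\Xi^\Gamma$, whose image is $\Pi(\Lambda)-t^*=\phi(\Pi(\Lambda),t)$. Thus $\bar\phi:H\times_\Sigma\mathbb{R}^d\to\mathbb{X}_{eq}$ is a continuous bijective homomorphism. To upgrade it to an isomorphism I would exhibit the domain as compact: the assignment $[\Lambda,t]_\Gamma\mapsto[\Pi(\Lambda),t]_\Sigma$ is a well-defined continuous surjection from the compact space $\Xi^\Gamma\times_\Gamma\mathbb{R}^d\simeq\mathbb{X}_{\Lambda_0}$ onto $H\times_\Sigma\mathbb{R}^d$, and since $\Sigma=\ker\phi$ is closed the target is Hausdorff; hence it is compact and $\bar\phi$ is a homeomorphism. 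This simultaneously delivers $H\times_\Sigma\mathbb{R}^d\simeq\mathbb{X}_{eq}$ and the co-compactness of $\Sigma$.

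The remaining lattice property, discreteness of $\Sigma$, is where the Meyer hypothesis re-enters, and I expect it to be the main obstacle. I would show that a net $(\gamma_\lambda^*,\gamma_\lambda)\to(0,0)$ in $H\times\mathbb{R}^d$ is eventually trivial. Since $\gamma_\lambda^*=\Pi(\Lambda_0+\gamma_\lambda)\to\mathfrak{o}$ and $\Pi$ is proper (Proposition \ref{prop:loc.compact}), the net $(\Lambda_0+\gamma_\lambda)$ remains in a compact subset of $\mathbf{X}_{\Lambda_0}$ for the combinatoric topology; as this topology is finer than the local one, along which $\Lambda_0+\gamma_\lambda\to\Lambda_0$ because $\gamma_\lambda\to0$, every combinatoric limit point must equal $\Lambda_0$, so $\Lambda_0+\gamma_\lambda\to\Lambda_0$ combinatorially. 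Fixing a large $R$ and a point $p\in\underline{\Lambda}_0\cap B(0,R)$, eventual exact agreement on $B(0,R)$ forces $p-\gamma_\lambda\in\underline{\Lambda}_0$, i.e. $\gamma_\lambda\in\underline{\Lambda}_0-\underline{\Lambda}_0$; this difference set being uniformly discrete by the Meyer property (Definition \ref{defmeyer}) while $\gamma_\lambda\to0$, we conclude $\gamma_\lambda=0$ eventually. Hence $\Sigma$ is discrete, completing the proof that it is a lattice.

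Finally I would check that $\{W_i\}_{i\in I}$ with $W_i=-\Pi(\Xi_i)$ is a window. Finiteness is the finiteness of $I$, and each $W_i$ is compact as the continuous image of the compact $\Xi_i$. The two delicate points are topological regularity $W_i=\overline{\mathring{W_i}}$ and the non-emptiness of $NS^H$. For regularity I would show that the non-singular configurations of $\Xi_i$---those $\Lambda$ for which the clopen condition $0\in\underline{\Lambda}_i$ persists throughout the strongly regionally proximal $\Pi$-fibre---map into $\mathring{W_i}$ and are dense in $\Xi_i$, using repetitivity together with the proper structure of $\Pi$, so that $W_i=-\Pi(\overline{\Xi_i^{\mathrm{ns}}})\subseteq\overline{\mathring{W_i}}$. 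Non-emptiness of $NS^H$ is automatic when $\Gamma=\langle\underline{\Lambda}_0\rangle$ is countable, by the Baire argument noted after Definition \ref{defwindow}, and in general should follow from the density of these same non-singular configurations. Matching the purely combinatorial clopen structure of the $\Xi_i$ with the interior/boundary dichotomy inside $H$ is, alongside the discreteness of $\Sigma$, the other place where genuine work is required.
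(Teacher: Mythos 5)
Your treatment of the lattice $\Sigma$ and of the identification $H\times_\Sigma\mathbb{R}^d\simeq\mathbb{X}_{eq}$ is correct, and in fact routes around the paper differently: where the paper proves injectivity of $[w,t]_\Sigma\mapsto w-t^*$ by appealing to the maximality of the equicontinuous factor (Lemma \ref{lem:parametrization.cps}), you compute $\ker\phi=\Sigma$ directly from the saturation of $\Xi^\Gamma$ over $H$; and where the paper proves discreteness of $\Sigma$ by covering $(\Pi^\Gamma)^{-1}(-W)$ with finitely many translates $\Xi+F$ and exploiting uniform discreteness of $\underline{\Lambda}-F$ (Lemma \ref{lem:discret.dense}), you combine properness of $\Pi$, the comparison between the combinatoric and local topologies, and the Meyer uniform discreteness of $\underline{\Lambda}_0-\underline{\Lambda}_0$. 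Both alternatives are sound; for discreteness one should only rephrase ``the net remains in a compact set'' as ``the net \emph{eventually} lies in the compact set $\Pi^{-1}(V)$'', where $V$ is a compact neighbourhood of $\mathfrak{o}$ in the locally compact group $H$, which is all the argument needs.

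The genuine gap is in the window part. Both the topological regularity of $W_i$ and the non-emptiness of $NS^H$ rest, in your sketch, on the density in $\Xi_i$ of configurations whose entire $\Pi$-fibre stays inside $\Xi_i$; you assert this density ``using repetitivity together with the proper structure of $\Pi$'' and say the general non-emptiness of $NS^H$ ``should follow'', but no proof is given, and repetitivity plus properness do not yield it. This density is exactly the non-trivial input the paper imports from Veech: Proposition \ref{prop:pi.open} produces a residual set of points where the factor map $\pi$ is open, Proposition \ref{prop:Pi.open} transfers this openness to $\Pi$, and Proposition \ref{prop:image.reguliere} then derives both $W_i=\overline{\mathring{W_i}}$ and the inclusion $H^0\subseteq NS^H$ (using saturation of the set of openness points, together with the equalities $\Pi^\Gamma(U\cap \Xi ^{\Gamma ,0})= int(\Pi ^\Gamma(U))\cap H^0$). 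Note also that your Baire fallback for $NS^H$ covers only countable $\Gamma$, whereas the theorem is stated for arbitrary $\Gamma\leqslant\mathbb{R}^d$ (the paper explicitly allows $\Gamma=\mathbb{R}^d$), so the openness-points argument cannot be dispensed with there. Without a proof of, or a citation for, this density statement, the assertion that $\left\lbrace W_i\right\rbrace _{i\in I}$ is a window is not established.
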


\vspace{0.2cm}
\textbf{Proof of the theorem.} The map $\Pi$ restricts in a mapping
\begin{align}\label{restriction} \begin{psmatrix}[colsep=1.3cm,
rowsep=0cm]
\Pi ^\Gamma : \Xi ^\Gamma \; \; & \; \; H 
\psset{arrows=->,linewidth=0.2pt, labelsep=1.2pt
,nodesep=0pt}
\ncline{1,1}{1,2}
\end{psmatrix}
\end{align} 
which is continuous and proper as $\Pi$ was, $\Gamma$-equivariant, and onto since $\Pi$ was onto and $\Xi ^\Gamma$ is saturated over $H$. Since it is the restriction of $\Pi$, one has $\Pi ^\Gamma(\Lambda ) = \Pi ^\Gamma(\Lambda ')$ if and only if $\Lambda $ and $\Lambda '$ are strongly regionally proximal.

\vspace{0.2cm}
\begin{lem}\label{lem:parametrization.cps} The group $H\times _{\Sigma } \mathbb{R}^d$ obtained as the quotient of $H \times \mathbb{R}^d$ by the subgroup $\Sigma $ is a compact Abelian group conjugated with $\mathbb{X}_{eq}$ through the isomorphism
\begin{align*} \begin{psmatrix}[colsep=1cm,
rowsep=0cm]
H \times _{\Sigma} \mathbb{R}^d \; \; & \; \; \mathbb{X}_{eq} \\
\left[ w,t\right] _{\Sigma} \; \; & \; \; w-t^*
\psset{arrows=<->,linewidth=0.2pt, labelsep=0.8pt
,nodesep=0pt}
\ncline{1,1}{1,2}
\ncline{2,1}{2,2}
\end{psmatrix}
\end{align*}
\end{lem}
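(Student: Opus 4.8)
The plan is to realise the displayed map as a continuous group isomorphism, first promoting it to a map out of the product $H\times\mathbb{R}^d$ and then playing off the combinatoric topology against the coarser topology of $\mathbb{X}_{eq}$. First I would introduce the assignment $H\times\mathbb{R}^d\to\mathbb{X}_{eq}$, $(w,t)\mapsto w-t^*$, where the difference is taken in the compact Abelian group $\mathbb{X}_{eq}$. Since $t\mapsto t^*$ is the restriction of the morphism (\ref{group.morphism}), this is a group homomorphism, and it is constant on $\Sigma$-cosets because $(w+\gamma^*)-(t+\gamma)^*=w-t^*$ for $\gamma\in\Gamma$. The key point for continuity is that, although $t\mapsto t^*$ is \emph{not} continuous for the combinatoric topology on $\mathbf{X}_{eq}$, it is continuous as a map into $\mathbb{X}_{eq}$: indeed $t^*=\pi(\Lambda_0+t)$, a composite of the continuous factor map $\pi$ with the jointly continuous translation action in the local topology. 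Combined with continuity of the inclusion $H\hookrightarrow\mathbf{X}_{eq}$ followed by the identity $\mathbf{X}_{eq}\to\mathbb{X}_{eq}$, and of subtraction in $\mathbb{X}_{eq}$, this yields a continuous map that, by the universal property of the quotient topology, descends to a continuous homomorphism $\phi:H\times_\Sigma\mathbb{R}^d\to\mathbb{X}_{eq}$, $[w,t]_\Sigma\mapsto w-t^*$.

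Next I would prove compactness of the source. The map $\Pi^\Gamma:\Xi^\Gamma\to H$ of (\ref{restriction}) is continuous, onto and $\Gamma$-equivariant, so $(\Lambda,t)\mapsto(\Pi^\Gamma(\Lambda),t)$ on $\Xi^\Gamma\times\mathbb{R}^d$ is continuous, onto and equivariant for the diagonal $\Gamma$-action matching $\Sigma$; hence it descends to a continuous surjection $\Xi^\Gamma\times_\Gamma\mathbb{R}^d\twoheadrightarrow H\times_\Sigma\mathbb{R}^d$. Since $\Xi^\Gamma\times_\Gamma\mathbb{R}^d\simeq\mathbb{X}_{\Lambda_0}$ is compact by Proposition \ref{prop:suspension}, the quotient $H\times_\Sigma\mathbb{R}^d$ is compact as well.

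It then remains to check that $\phi$ is a bijection. For surjectivity, $\mathrm{im}(\phi)$ is a subgroup of $\mathbb{X}_{eq}$ containing every $t^*$ (take $w=\mathfrak{o}$); being the continuous image of a compact space inside the Hausdorff group $\mathbb{X}_{eq}$ it is closed, and as $(\mathbb{X}_{eq},\mathbb{R}^d)$ is a transitive Kronecker system the orbit $\{t^*\}_{t\in\mathbb{R}^d}$ of the unit $\mathfrak{o}$ is dense, forcing $\mathrm{im}(\phi)=\mathbb{X}_{eq}$. For injectivity, suppose $w-t^*=w'-t'^*$; then $w-w'=(t-t')^*$ lies in $H$. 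Here I would invoke the saturation of $\Xi^\Gamma$ over $H$ established in the proof of Proposition \ref{prop:internal.group}: since $(t-t')^*=\Pi(\Lambda_0+(t-t'))\in H=\Pi(\Xi^\Gamma)$, saturation gives $\Lambda_0+(t-t')\in\Xi^\Gamma$, hence $t-t'\in\Gamma$ because $\underline{\Lambda}_0\subseteq\Gamma$. Setting $\gamma:=t'-t\in\Gamma$ one checks $(w',t')=(w+\gamma^*,t+\gamma)$, so $[w,t]_\Sigma=[w',t']_\Sigma$.

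Finally, a continuous bijection from a compact space onto a Hausdorff space is a homeomorphism; as $\phi$ is also a homomorphism it is an isomorphism of topological groups, so $H\times_\Sigma\mathbb{R}^d$ is a compact Abelian group conjugate to $\mathbb{X}_{eq}$ (in particular $\Sigma$ is automatically closed). I expect the genuine obstacle to be the injectivity step: it is there that the Meyer and repetitivity hypotheses enter, solely through the saturation property, to guarantee that the fibres of $\phi$ are exactly the $\Sigma$-orbits, i.e. that $s^*\in H$ forces $s\in\Gamma$.
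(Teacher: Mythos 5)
Your proposal is correct, and up to the bijectivity step it coincides with the paper's proof: the paper also descends $(w,t)\mapsto w-t^{*}$ from $H\times\mathbb{R}^{d}$ to the quotient, and also obtains compactness of $H\times_{\Sigma}\mathbb{R}^{d}$ from the factor map $\Xi^{\Gamma}\times_{\Gamma}\mathbb{R}^{d}\simeq\mathbb{X}_{\Lambda_{0}}\twoheadrightarrow H\times_{\Sigma}\mathbb{R}^{d}$ induced by $\Pi^{\Gamma}$ together with Proposition~\ref{prop:suspension}. Where you genuinely diverge is in proving that the descended morphism is $1$-to-$1$. The paper never computes a fibre: it observes that $H\times_{\Sigma}\mathbb{R}^{d}$, being a compact Abelian group on which $\mathbb{R}^{d}$ acts by rotation, is an equicontinuous factor of $\mathbb{X}_{\Lambda_{0}}$, hence by the maximality in Theorem~\ref{theoremfactor} it also factors through $\mathbb{X}_{eq}$; comparing the two resulting factor maps (equivariant, and agreeing at the image of $\Lambda_{0}$, hence on dense orbits) produces mutually inverse morphisms, giving injectivity abstractly. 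You instead compute the fibres directly: from $w-t^{*}=w'-t'^{*}$ you deduce $(t-t')^{*}\in H$, and the saturation $\Pi^{-1}(H)=\Xi^{\Gamma}$ established inside the proof of Proposition~\ref{prop:internal.group} forces $t-t'\in\Gamma$, so the fibres are exactly the $\Sigma$-cosets. Both routes are sound. Yours is more self-contained and pinpoints exactly where repetitivity and the Meyer property enter (solely through saturation, i.e.\ through Proposition~\ref{prop:regional.proximality} and Lemma~\ref{lem:differences}); it also spells out surjectivity via the closed-subgroup-containing-a-dense-orbit argument, which the paper compresses into the unproved assertion that $(w,t)\mapsto w-t^{*}$ is onto --- and since $H$ need not be open in $\mathbb{X}_{eq}$, some such closed-image argument really is needed. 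The paper's route is shorter and leverages the universal property of the maximal equicontinuous factor, but leaves the final identification as a sketch (``with this argument one can show\dots''). Your closing remark that $\Sigma$ is automatically closed is a correct by-product of the homeomorphism, and you rightly avoid relying on the discreteness of $\Sigma$, which is only established afterwards in Lemma~\ref{lem:discret.dense}.
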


\begin{proof} The mapping $\Pi ^\Gamma$ naturally provides a factor map from $\Xi ^\Gamma \times _{\Gamma} \mathbb{R}^d $ onto $H \times _{\Sigma} \mathbb{R}^d$ by letting $[\Lambda , t]_\Gamma$ mapped onto $[\Pi ^\Gamma (\Lambda ),t]_\Sigma$. In addition the map from $H \times \mathbb{R}^d$ to $\mathbb{X}_{eq}$ mapping $(w,t)$ onto $w-t^*$ is well defined, and an onto continuous group morphism which is obviously $\Sigma$-invariant, and thus defines a continuous group morphism from $H \times _{\Sigma} \mathbb{R}^d$ onto $\mathbb{X}_{eq}$. This shows that $H \times _{\Sigma} \mathbb{R}^d$ is a compact (Hausdorff) Abelian group. Now observe that from the conjugacy of $\mathbb{X}_{\Lambda _0}$ with $\Xi ^\Gamma \times _{\Gamma} \mathbb{R}^d $ of proposition \ref{prop:suspension} one has $H \times _{\Sigma} \mathbb{R}^d$ as an equicontinuous factor of $\mathbb{X}_{\Lambda _0}$ an thus must be also a factor of $\mathbb{X}_{eq}$. With this argument one can show that the morphism from $H \times _{\Sigma} \mathbb{R}^d$ onto $\mathbb{X}_{eq}$ is $1$-to-$1$, setting the proof.
\end{proof}

\vspace{0.2cm}
Our wish now is to show that the compact subsets $W_i$ have non-empty interior in $H$, and that they are in fact the closure of their interior. Our strategy is to show that there are point patterns $\Lambda \in \Xi ^\Gamma$ where the mapping $\Pi ^\Gamma$ is \textit{open} (recall that a map $\pi : X \longrightarrow Y$ is open at $x\in X$ if any neighborhood of $x\in X$ is mapped onto a neighborhood of $\pi (x)\in Y$). If we find such point pattern $\Lambda$ within for instance $\Xi _i$, which is an open subset of $\Xi ^\Gamma$, then $\Pi (\Lambda )$ will automatically remains into the interior of $\Pi (\Xi _i)=-W_i$, showing the non-emptiness of this latter interior and thus the non-emptiness of the interior of $W_i$. The next proposition asserts that such point patterns does exists:

\vspace{0.2cm}
\begin{prop}\label{prop:pi.open}\cite{Vee} Let  $(\mathbb{X},\mathbb{R}^d)$ a minimal dynamical system over $\mathbb{X}$ a compact metric space, and $\pi : \mathbb{X} \twoheadrightarrow \mathbb{X}_{eq}$ the factor map onto its maximal equicontinuous factor. Then there exists a dense residual $\mathbb{R}^d$-invariant subset $\mathbb{X}^0_{eq} \subseteq \mathbb{X}_{eq}$ such that $\pi $ is open at any point of the saturated $\mathbb{R}^d$-invariant subset $\mathbb{X}^0:= \pi ^{-1}(\mathbb{X}^0_{eq})\subset \mathbb{X}$.
\end{prop}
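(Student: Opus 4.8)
The plan is to recognize the openness of $\pi$ as the lower semicontinuity of the fiber multifunction, and then to extract $\mathbb{X}^0_{eq}$ from a general semicontinuity principle. Concretely, I would work with the set-valued map
\[
\Phi : \mathbb{X}_{eq} \longrightarrow 2^{\mathbb{X}}, \qquad \Phi(y):= \pi^{-1}(y),
\]
taking values in the collection of nonempty compact subsets of $\mathbb{X}$ equipped with the Hausdorff metric. Since $\mathbb{X}$ is compact metric and $\pi$ is continuous, each fibre is compact and $\Phi$ is \emph{upper} semicontinuous; this is immediate from the closedness of the graph of $\pi$ together with compactness of $\mathbb{X}$, and requires nothing beyond continuity.

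The first key step is the elementary equivalence between lower semicontinuity of $\Phi$ at a point and openness of $\pi$ over that point. I would show that if $\Phi$ is lower semicontinuous at $y\in\mathbb{X}_{eq}$, then $\pi$ is open at every $x\in\pi^{-1}(y)$. Indeed, given an open $U\ni x$, the open set $U$ meets $\Phi(y)$ at $x$, so by lower semicontinuity there is a neighbourhood $N$ of $y$ with $U\cap\Phi(y')\neq\emptyset$ for all $y'\in N$; since $U\cap\pi^{-1}(y')\neq\emptyset$ means precisely $y'\in\pi(U)$, this gives $N\subseteq\pi(U)$, so $\pi(U)$ is a neighbourhood of $y=\pi(x)$. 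Hence openness of $\pi$ over $y$ is governed by the \emph{continuity} of $\Phi$ at $y$, i.e. by $\Phi$ being simultaneously upper and lower semicontinuous there.

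The second step produces the residual set, and is where the genuine content lies. I would invoke the classical semicontinuity principle of Fort: an upper semicontinuous, compact-valued multifunction from a Baire space into a metric space is continuous exactly on a dense $G_\delta$ subset of its domain. As $\mathbb{X}_{eq}$ is a compact metric space, it is Baire, and $\Phi$ is upper semicontinuous with compact values, so the set $\mathbb{X}^0_{eq}$ of continuity points of $\Phi$ is a dense residual subset of $\mathbb{X}_{eq}$. Combined with the previous step, $\pi$ is then open at every point of $\mathbb{X}^0:=\pi^{-1}(\mathbb{X}^0_{eq})$. The main obstacle is precisely this passage to continuity points: if one preferred a self-contained argument avoiding Fort's theorem, one would instead fix a countable basis $\{U_n\}$ of $\mathbb{X}$ and show that each $\pi(U_n)$ is contained in the closure of its interior (a semi-openness property, obtained from minimality together with the homogeneity of the group $\mathbb{X}_{eq}$ and a Baire category argument), whence $\bigcup_n\bigl(\pi(U_n)\setminus\operatorname{int}\pi(U_n)\bigr)$ is meagre and its complement is the desired residual set; this is the laborious route.

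Finally, to see that $\mathbb{X}^0_{eq}$ is $\mathbb{R}^d$-invariant, I would use equivariance. From $\pi(x.t)=\pi(x).t^*$ one computes $\Phi(y.t^*)=\Phi(y).t$, and right translation by $t$ is a homeomorphism of $\mathbb{X}$ inducing a homeomorphism of the hyperspace $2^{\mathbb{X}}$; continuity of $\Phi$ at $y$ therefore transfers to continuity at $y.t^*$, so the continuity set is already invariant, and $\mathbb{X}^0$ is saturated by its very definition. I note that minimality is used here only to guarantee that $\mathbb{X}_{eq}$ is a compact metric group rotation, the homogeneous setting in which the density and semi-openness statements are clean; the equivalence ``openness $\Leftrightarrow$ fibrewise lower semicontinuity'' and the invariance bookkeeping use nothing more than continuity, compactness, and $\mathbb{R}^d$-equivariance of $\pi$.
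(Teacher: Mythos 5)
Your proof is correct. Note that the paper itself contains no argument for this proposition: it is stated as a quotation from Veech \cite{Vee}, so there is no in-paper proof to compare against. Your reconstruction --- viewing the fibers as an upper semicontinuous compact-valued multifunction $y\mapsto \pi^{-1}(y)$, invoking Fort's theorem to get a residual set of continuity points, observing that lower semicontinuity of the fiber map at $y$ gives openness of $\pi$ at every point of $\pi^{-1}(y)$, and using equivariance $\pi^{-1}(y.t^*)=\pi^{-1}(y).t$ for invariance of the continuity set --- is exactly the classical semicontinuity/Baire-category argument that underlies the cited result, and all steps (upper semicontinuity from compactness, the openness equivalence, metrizability of $\mathbb{X}_{eq}$ so that Fort's theorem applies, and the invariance bookkeeping) check out.
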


\vspace{0.2cm}
\begin{prop}\label{prop:Pi.open} For $\Lambda _0$ a repetitive Meyer multiple set of $\mathbb{R}^d$, the factor map $\pi : \mathbb{X}_{\Lambda _0}\twoheadrightarrow \mathbb{X}_{eq}$ is open exactly where $\Pi : \mathbf{X}_{\Lambda _0}\twoheadrightarrow \mathbf{X}_{eq}$ is.
\end{prop}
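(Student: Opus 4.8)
The plan is to prove the two openness properties equivalent by showing that, near any point, the passage from the combinatoric topology to the local topology is the very same operation upstairs and downstairs, namely a product with the Euclidean flow direction; in suitable charts the map $\pi$ then appears as $\Pi$ times the identity of a small ball, and the equivalence drops out formally.

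First I would reduce to a convenient base point. Both $\pi$ and $\Pi$ are $\mathbb{R}^d$-equivariant and $\mathbb{R}^d$ acts by homeomorphisms on each of the four spaces involved, so openness of either map at $\Lambda$ is equivalent to openness at any translate $\Lambda.p$. Since $\Lambda_0$ is relatively dense, every $\Lambda$ has a support point $p\in\underline{\Lambda}$, and then $\Lambda.p=\Lambda-p$ satisfies $0\in\underline{\Lambda-p}$ together with $\underline{\Lambda-p}\subseteq\underline{\Lambda}-\underline{\Lambda}\subseteq\langle\underline{\Lambda}_0\rangle\subseteq\Gamma$. Thus I may assume $\Lambda\in\Xi\cap\Xi^{\Gamma}$ and set $\mathfrak{e}:=\pi(\Lambda)=\Pi(\Lambda)\in H$.

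Next I would build two local charts. Upstairs, for $R$ large the combinatoric ball $\mathbf{U}:=\{\Lambda':\Lambda'\cap B(0,R)=\Lambda\cap B(0,R)\}$ is clopen, contained in $\Xi\cap\Xi^{\Gamma}$ by Lemma \ref{lem:subsystem}, and on it the combinatoric and local topologies coincide; for $\epsilon$ smaller than a radius of uniform discreteness the box map $(\Lambda',s)\mapsto\Lambda'-s$ is a homeomorphism of $\mathbf{U}\times B(0,\epsilon)$ onto a local-open neighbourhood of $\Lambda$, injectivity being exactly the transversality of the flow to $\Xi$. Downstairs, I would invoke the parametrization isomorphism of Lemma \ref{lem:parametrization.cps}, which realises $\mathbb{X}_{eq}$ as the quotient of $H\times\mathbb{R}^d$ (with $H$ in its combinatoric locally compact topology) by the discrete subgroup $\Sigma$, through $[\mathfrak{f},s]_{\Sigma}\mapsto\mathfrak{f}-s^*$. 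The quotient map $q\colon H\times\mathbb{R}^d\to\mathbb{X}_{eq}$ is open and continuous, and because $\Sigma$ is discrete a small product $\mathbf{V}\times B(0,\epsilon)$, with $\mathbf{V}\ni\mathfrak{e}$ combinatorially open, meets each $\Sigma$-coset at most once; hence $q$ restricts there to a homeomorphism $\beta$ onto a local-open neighbourhood of $\mathfrak{e}$. After shrinking $R$ so that $\Pi(\mathbf{U})\subseteq\mathbf{V}$, the equivariance relation $\pi(\Lambda'-s)=\Pi(\Lambda')+s^*$ shows that in these charts $\pi$ becomes $q\circ(\Pi|_{\mathbf{U}}\times\mathrm{id})$ up to the harmless sign homeomorphism $s\mapsto-s$ on the ball.

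With the charts in place the conclusion is formal: openness at a point is preserved by pre- and post-composition with homeomorphisms and by passing to an open neighbourhood, so $\pi$ is open at $\Lambda$ if and only if $\Pi|_{\mathbf{U}}\times\mathrm{id}$ is open at $(\Lambda,0)$, and the elementary fact that $f\times\mathrm{id}_{B(0,\epsilon)}$ is open at $(x,0)$ exactly when $f$ is open at $x$ then identifies this with openness of $\Pi|_{\mathbf{U}}$, equivalently of $\Pi$, at $\Lambda$. I expect the substantive step to be the downstairs chart: the difficulty is that $\Gamma^*$ is dense in $H$, so the flow $s\mapsto s^*$ returns near $H$ for arbitrarily small times, and one must know that these returns are combinatorially far from $\mathfrak{e}$. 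This is precisely the discreteness of $\Sigma$ in $H\times\mathbb{R}^d$ provided by Lemma \ref{lem:parametrization.cps} (resting on the Meyer and uniform-discreteness input through Proposition \ref{prop:loc.compact}); granting it, the local product structure downstairs mirrors the transversal structure upstairs and the equivalence follows.
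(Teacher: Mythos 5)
Your overall strategy --- flow boxes upstairs, a local trivialization of $H\times_{\Sigma}\mathbb{R}^d$ downstairs, and a formal transfer of openness through the charts --- is a localized version of the paper's own argument, which runs the same transfer globally through the suspension diagram of Proposition \ref{prop:suspension} and Lemma \ref{lem:parametrization.cps} applied with $\Gamma=\mathbb{R}^d$. But as written your proof is circular at exactly the step you yourself flag as substantive. The existence of the downstairs chart --- that some product $\mathbf{V}\times B(0,\epsilon)$ meets each $\Sigma$-coset at most once --- is precisely the discreteness of $\Sigma$ in $H\times\mathbb{R}^d$. That is not contained in Lemma \ref{lem:parametrization.cps} (which only provides the continuous bijective group morphism $H\times_{\Sigma}\mathbb{R}^d\simeq\mathbb{X}_{eq}$, and says nothing about $\Sigma$ being discrete or the quotient map being locally injective); it is Lemma \ref{lem:discret.dense}. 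In the paper, Lemma \ref{lem:discret.dense} is proved \emph{after} Proposition \ref{prop:Pi.open} and depends on it: its proof picks a point of $\Xi^{\Gamma,0}\cap\Xi$ and uses that its image lies in $-\mathring{W}$, which requires both the residual set $\Xi^{\Gamma,0}$ (obtained by combining Proposition \ref{prop:pi.open} with Proposition \ref{prop:Pi.open}) and the topological regularity of $W$ from Proposition \ref{prop:image.reguliere} (which again rests on Proposition \ref{prop:Pi.open}). So ``granting'' discreteness here is assuming a downstream consequence of the statement you are proving. Note also that the chart is genuinely needed only for the implication ``$\pi$ open at $\Lambda$ $\Rightarrow$ $\Pi$ open at $\Lambda$'' --- the converse needs nothing beyond openness of the two quotient maps, which is automatic --- and this is exactly the implication the paper uses afterwards, so it cannot be discarded.

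The gap is repairable, but you must supply an independent proof of discreteness of $\Sigma$ from what is available \emph{before} this proposition, namely properness of $\Pi$ (Proposition \ref{prop:loc.compact}), Lemma \ref{lem:differences} and Proposition \ref{prop:meyer}. One way: by Lemma \ref{lem:differences} every pattern in the fiber $\Pi^{-1}(\gamma^*)$, $\gamma\in\Gamma$, has support inside $\underline{\Lambda}_0+3(\underline{\Lambda}_0-\underline{\Lambda}_0)+\gamma$, and $4(\underline{\Lambda}_0-\underline{\Lambda}_0)$ is uniformly discrete by the Meyer property, so for $0<\vert\gamma\vert$ small the fibers over $\mathfrak{o}$ and over $\gamma^*$ consist of patterns with disjoint supports and hence $\gamma^*\neq\mathfrak{o}$; to upgrade this to an open separation, observe that $C_\epsilon:=\lbrace \Lambda_0+\gamma \,\vert\, \gamma\in\Gamma,\ 0<\vert\gamma\vert\le\epsilon\rbrace$ is combinatorially closed (exact agreement of nonempty patches forces the translation parameters to coincide, again by uniform discreteness), that $\Pi$ is a closed map because it is proper with locally compact Hausdorff target, and that $\mathfrak{o}\notin\Pi(C_\epsilon)$; the complement of $\Pi(C_\epsilon)$ then furnishes a neighborhood $U$ of $\mathfrak{o}$ with $(U\times B(0,\epsilon))\cap\Sigma=\lbrace(0,0)\rbrace$. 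With such a lemma placed before the proposition, your chart argument goes through and is, if anything, more explicit than the paper's proof, which phrases the transfer purely through open quotient maps and is itself quite terse about the reverse direction. As it stands, however, your proof cannot be inserted at this point of the paper's development.
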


\vspace{0.2cm}
\begin{proof} Due to the conjugacy maps of proposition \ref{prop:suspension} and of lemma \ref{lem:parametrization.cps}, one has, when applied to the case $\Gamma = \mathbb{R}^d$ so that we have $\Xi ^\Gamma = \mathbf{X}_{\Lambda _0}$ and $H = \mathbf{X}_{eq}$ in this case, a comutative diagram
\begin{align*}\begin{psmatrix}[colsep=1.5cm,
rowsep=1cm]
\mathbf{X}_{\Lambda _0} \; \; & \; \; \mathbf{X}_{\Lambda _0} \times \mathbb{R}^d  \; \; & \; \; \mathbf{X}_{\Lambda _0} \times _\Gamma \mathbb{R}^d \; \; & \; \; \mathbb{X}_{\Lambda _0} \\
\mathbf{X}_{eq} \; \; & \; \; \mathbf{X}_{eq} \times \mathbb{R}^d  \; \; & \; \; \mathbf{X}_{eq} \times _\Gamma \mathbb{R}^d \; \; & \; \; \mathbb{X}_{eq}
\psset{arrows=<->,linewidth=0.2pt, labelsep=1.3pt
,nodesep=0pt}
\ncline{1,3}{1,4}^{~\; \; \; \; \simeq }
\ncline{2,3}{2,4}^{~ \; \; \;\;  \simeq }
\psset{arrows=->>,linewidth=0.2pt, labelsep=1.3pt
,nodesep=0pt}
\ncline{1,2}{1,3}
\ncline{2,2}{2,3}
\ncline{1,1}{2,1}<{\Pi ~}
\ncline{1,2}{2,2}<{\Pi \times id ~}
\ncline{1,3}{2,3}<{\Pi \times _{\Gamma } id ~}
\ncline{1,4}{2,4}<{\pi ~}
\psset{arrows=<<-,linewidth=0.2pt, labelsep=1.3pt
,nodesep=0pt}
\ncline{1,1}{1,2}
\ncline{2,1}{2,2}
\end{psmatrix}\end{align*}
Now the right side horizontal maps are conjugacies, and the center horizontal maps are open maps, as quotient maps through group actions. Obviously the left side horizontal maps are open maps, and it easily follows that the right side vertical map $\pi$ is open exactly where the left side map $\Pi$ is.
\end{proof}

\vspace{0.2cm}
As a consequence of the two previous propositions the factor map $\Pi ^\Gamma : \Xi ^\Gamma \twoheadrightarrow H$ admits a dense residual $\Gamma$-invariant subset $\Xi ^{\Gamma , 0} \subseteq \Xi ^\Gamma$, which is saturated and with dense image $H^0 \subseteq H$, of elements where $\Pi ^\Gamma$ is open. This is because $\Xi ^\Gamma $ is saturated over $ H$ and that both of these are open in their respective space $\mathbf{X}_{\Lambda _0}$ and $\mathbf{X}_{eq}$.

\vspace{0.2cm}

\begin{prop}\label{prop:image.reguliere} Each $W_i$ is the closure of its interior in $H$. Moreover, the non-empty subset $H^0\subseteq H$ is included into $NS^H:= H\backslash \left[ \Gamma ^* -  \bigcup _{i\in I}\partial W_i \right]$.
\end{prop}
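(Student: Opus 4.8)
The strategy is to phrase both claims in terms of the compact set $\Pi (\Xi _i)=-W_i$ inside $H$ and to feed in the openness of $\Pi ^\Gamma$ along the saturated residual set $\Xi ^{\Gamma ,0}$ produced just above. The one geometric input used throughout is elementary: if $\mu \in \Xi ^{\Gamma ,0}$ happens to lie in the open set $\Xi _i$, then since $\Pi ^\Gamma$ is open at $\mu$ and $\Xi _i$ is an open neighbourhood of $\mu$, the image $\Pi ^\Gamma(\Xi _i)=\Pi (\Xi _i)$ is a neighbourhood of $\Pi (\mu )$; hence $\Pi (\mu )\in \mathring{\Pi (\Xi _i)}$. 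This gives the inclusion $\Pi (\Xi ^{\Gamma ,0}\cap \Xi _i)\subseteq \mathring{\Pi (\Xi _i)}$, which I will invoke twice.

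For the topological regularity of $W_i$: as $W_i=-\Pi (\Xi _i)$ and $w\mapsto -w$ is a homeomorphism of $H$, it is enough to check that $\Pi (\Xi _i)$ equals the closure of its interior. The inclusion $\overline{\mathring{\Pi (\Xi _i)}}\subseteq \Pi (\Xi _i)$ is automatic, $\Pi (\Xi _i)$ being compact hence closed. For the reverse inclusion I would note that $\Xi ^{\Gamma ,0}$ is dense in $\Xi ^\Gamma$ and $\Xi _i$ is open, so $\Xi ^{\Gamma ,0}\cap \Xi _i$ is dense in $\Xi _i$ (and nonempty, since $\Xi _i\neq \emptyset$ by relative density of $\Lambda _i$); by continuity of $\Pi$ its image is dense in $\Pi (\Xi _i)$, and by the inclusion above this image lies in $\mathring{\Pi (\Xi _i)}$. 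Hence $\mathring{\Pi (\Xi _i)}$ is dense in $\Pi (\Xi _i)$, giving both $\Pi (\Xi _i)=\overline{\mathring{\Pi (\Xi _i)}}$ and $\mathring{W_i}\neq \emptyset$.

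For the inclusion $H^0\subseteq NS^H$: fix $w\in H^0$, $\gamma \in \Gamma$ and $i\in I$; the goal is $\gamma ^*-w\notin \partial W_i$. Since $H^0=\Pi (\Xi ^{\Gamma ,0})$ with $\Xi ^{\Gamma ,0}$ being $\Gamma $-invariant, $H^0$ is invariant under translation by $\Gamma ^*$, so $v:=w-\gamma ^*\in H^0$; moreover, using $\partial (-A)=-\partial A$, the condition $\gamma ^*-w\in \partial W_i$ is equivalent to $v\in \partial \bigl(\Pi (\Xi _i)\bigr)$. If $v\notin \Pi (\Xi _i)$ there is nothing to prove. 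Otherwise pick $\mu '\in \Xi _i$ with $\Pi (\mu ')=v$; because $v\in H^0$ and $\Xi ^{\Gamma ,0}$ is saturated, i.e. $\Xi ^{\Gamma ,0}=(\Pi ^\Gamma)^{-1}(H^0)$, the preimage $\mu '$ already lies in $\Xi ^{\Gamma ,0}$. The inclusion $\Pi (\Xi ^{\Gamma ,0}\cap \Xi _i)\subseteq \mathring{\Pi (\Xi _i)}$ then yields $v\in \mathring{\Pi (\Xi _i)}$, so $v\notin \partial \bigl(\Pi (\Xi _i)\bigr)$, as required.

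The delicate point, and the reason the full saturation of $\Xi ^{\Gamma ,0}$ is needed rather than merely its density and $\Gamma $-invariance, is exactly this last step: an arbitrary preimage of $v$ need not lie in $\Xi _i$, so openness cannot be applied to it directly; saturation lets me move to a preimage $\mu '$ \emph{inside} $\Xi _i$ while keeping $\Pi ^\Gamma$ open there. Once the residual/openness structure recorded above is granted, the remainder is formal, the only bookkeeping being the sign identities $\partial (-A)=-\partial A$ and $\mathring{(-A)}=-\mathring{A}$ relating $W_i$ to $\Pi (\Xi _i)$.
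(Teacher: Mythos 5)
Your proposal is correct and takes essentially the same approach as the paper: both arguments rest on the openness of $\Pi ^\Gamma$ at points of the dense, saturated, $\Gamma$-invariant set $\Xi ^{\Gamma ,0}$, deducing topological regularity of $W_i$ from the density of $\Pi (\Xi ^{\Gamma ,0}\cap \Xi _i)$ in $\Pi (\Xi _i)$ together with its inclusion in $int(\Pi (\Xi _i))$, and deducing $H^0\subseteq NS^H$ from saturation plus the disjointness of the interior and boundary of the closed set $W_i$. The paper merely packages this differently (a preliminary identity for arbitrary open $U\subseteq \Xi ^\Gamma$, and a contrapositive formulation of the second claim), which is a cosmetic difference.
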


\vspace{0.2cm}
\begin{proof} Given an open subset $U$ of $\Xi ^\Gamma $, let us show the equalities
\begin{align}\label{égalités}\Pi ^\Gamma(U\cap \Xi ^{\Gamma , 0})= int(\Pi  ^\Gamma(U))\cap H^0= \Pi ^\Gamma(U)\cap H^0
\end{align}
For, each $\Lambda \in U\cap \Xi ^{\Gamma , 0}$ admits $U$ as neighborhood, so have an image $\Pi (\Lambda) \in H^0$ admiting $\Pi (U)$ as neighborhood in $H$, giving $\Pi(U\cap \Xi ^{\Gamma , 0})\subset int(\Pi  ^\Gamma(U))\cap H^0$. The inclusion of this latter into $\Pi ^\Gamma(U)\cap H^0$ is obvious, and in turns, any element $w$ of $\Pi ^\Gamma(U)\cap H^0$ admits an antecedent element $\Lambda$ in $U$, necessarily within $\Xi ^{\Gamma , 0}$ as this latter is saturated over $H^0$, which gives $\Pi ^\Gamma(U)\cap H^0 \subset \Pi ^\Gamma(U\cap \Xi ^{\Gamma , 0})$.

\vspace{0.2cm}
Let then $K$ be a compact and topologically regular subset of $\Xi ^\Gamma$: $int(K)\cap \Xi ^{\Gamma , 0}$ is dense in $K$, so the image $\Pi ^\Gamma (int(K))\cap H^0\subset int(\Pi(K))\cap H^0\subset int(\Pi(K))$ is dense in $\Pi (K)$. In particular, $int(\Pi(K))$ is dense in $\Pi (K)$ so this latter isthe closure of its interior. This is in particular true for $K= \Xi _i$ and $\Xi$, as they are both open and closed in $\Xi ^\Gamma$, and it follows that $W_i$ and $W$ are compact subsets which are the closure of their interior in $H$.

\vspace{0.2cm}
Now let us show that $H^0$ lies into $NS^\Gamma $: if $w$ lies into the complementary set of $NS^\Gamma$ in $H$, then it exactly means that $w-\gamma ^*$ lies within $ -\partial W_i$ for some $\gamma \in \Gamma$ and some $i\in I$. There exists then $\Lambda $, with $\Pi (\Lambda )= w$, such that $\Lambda -\gamma \in \Xi _i$. But $\Lambda $ cannot be in $\Xi ^{\Gamma , 0}$ because otherwise $\Lambda -\gamma$ would also be into $\Xi ^{\Gamma , 0}$, and because $\Lambda -\gamma \in \Xi _i$ we would have $w-\gamma ^*\in int(-W_i)=-int(W_i) $, which contradict the choice of $w-\gamma ^*\in -\partial W_i$. Therefore the complementary set of $NS^\Gamma$ lies into the complementary set of $H^0$ in $H^\Gamma$, giving the desired inclusion.
\end{proof}

\vspace{0.2cm}
Now, the theorem \ref{theo:construction.CPS} will be proven once we show the last following lemma. The key argument here, added to the fact that $W$ is topologicaly regular, is the fact that $\Pi $ is a proper map (proposition \ref{prop:loc.compact}).

\vspace{0.2cm}
\begin{lem}\label{lem:discret.dense} $\Sigma := \left\lbrace (\gamma ^*, \gamma ) \in H \times \mathbb{R}^d \; \vert \, \gamma \in \Gamma \right\rbrace $ is a discrete subgroup of $H \times \mathbb{R}^d$.
\end{lem}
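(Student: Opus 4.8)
The plan is to show that the identity $(\mathfrak{o},0)$ is isolated in $\Sigma$; since $\Sigma$ is the graph of the $*$-map it is automatically a subgroup of the locally compact group $H\times\mathbb{R}^d$, so it will then suffice to exhibit one neighbourhood of $(\mathfrak{o},0)$ meeting $\Sigma$ only at the identity. The point to keep in mind throughout is that $\Gamma^*$ is \emph{dense} in $H$ by Proposition \ref{prop:internal.group}, so no control can come from the $H$-coordinate alone: the separation must be produced entirely by the $\mathbb{R}^d$-coordinate, exploiting that the $*$-map $\gamma\mapsto\gamma^*=\Pi(\Lambda_0+\gamma)$ is discontinuous for the combinatoric topology.

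First I would fix a compact neighbourhood $V$ of $\mathfrak{o}$ in $H$ and a radius $\varepsilon<r_0/2$, where $r_0$ is a uniform-discreteness radius for the hull, and examine the set of $\gamma\in\Gamma$ with $\gamma^*\in V$ and $|\gamma|<\varepsilon$. Since $\Lambda_0\in\Xi^\Gamma$ (and we may assume $0\in\underline{\Lambda}_0$) and $\Pi^\Gamma(\Lambda_0+\gamma)=\gamma^*$, each corresponding translate $\Lambda_0+\gamma$ lies in $(\Pi^\Gamma)^{-1}(V)$, which is compact because $\Pi^\Gamma$ is proper (being the restriction of the proper map $\Pi$ of Proposition \ref{prop:loc.compact}). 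The key observation is that for $|\gamma|<\varepsilon<r_0/2$ uniform discreteness forces $(\Lambda_0+\gamma)\cap B(0,\varepsilon)=\{\gamma\}$, so that two distinct such translates already disagree inside $B(0,\varepsilon)$ and therefore lie at combinatoric distance at least $1/(1+\varepsilon)$. A subset of a compact metric space whose pairwise distances are bounded below is finite, so only finitely many $\gamma$ satisfy $\gamma^*\in V$ and $|\gamma|<\varepsilon$.

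It then remains to discard the finitely many nonzero ones. Here I would use that if $\gamma^*=\mathfrak{o}$ then $\Lambda_0+\gamma$ and $\Lambda_0$ are strongly regionally proximal by Proposition \ref{prop:regional.proximality}, whence by Lemma \ref{lem:differences} the vector $\gamma$ lies in $3(\underline{\Lambda}_0-\underline{\Lambda}_0)$; this set is uniformly discrete by the Meyer property (Proposition \ref{prop:meyer}), so after shrinking $\varepsilon$ further the only small vector it contains is $0$. Consequently each of the finitely many nonzero $\gamma_1,\dots,\gamma_m$ found above has $\gamma_j^*\neq\mathfrak{o}$, and passing to a smaller neighbourhood $V'$ of $\mathfrak{o}$ avoiding the finite set $\{\gamma_1^*,\dots,\gamma_m^*\}$ yields $\Sigma\cap(V'\times B(0,\varepsilon))=\{(\mathfrak{o},0)\}$, which is discreteness.

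The main obstacle is exactly the discontinuity of the $*$-map: a priori a small nonzero $\gamma$ could have $\gamma^*$ arbitrarily close to, or even equal to, $\mathfrak{o}$, and the entire argument rests on ruling this out by combining the properness of $\Pi$ (to gain compactness, hence finiteness) with the uniform discreteness supplied by the Meyer property (used both for the combinatoric separation of small translates and for the conclusion $\gamma\in 3(\underline{\Lambda}_0-\underline{\Lambda}_0)$). The topological regularity of $W$ established in Proposition \ref{prop:image.reguliere} enters as the complementary ingredient guaranteeing that the $W_i$ genuinely have nonempty interior, so that the resulting data is a bona fide window and $(H,\Sigma,\mathbb{R}^d)$ a non-degenerate cut \& project scheme.
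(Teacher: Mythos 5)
Your proof is correct, but it takes a genuinely different route from the paper's. Both arguments rest on the same two pillars, namely properness of $\Pi^\Gamma$ (to produce a compact set of point patterns) and uniform discreteness of iterated difference sets $\underline{\Lambda}_0-\underline{\Lambda}_0+\dots-\underline{\Lambda}_0$ (Proposition \ref{prop:meyer}), but they deploy them differently. The paper works with the window itself: it covers the compact preimage $(\Pi ^\Gamma)^{-1}(-W)$ by finitely many transversal translates $\Xi +F$, picks a point $\Lambda \in \Xi \cap \Xi ^{\Gamma ,0}$ where $\Pi^\Gamma$ is open, so that $w=\Pi ^\Gamma (\Lambda )$ lies in $-\mathring{W}$ (this is where the openness results, Propositions \ref{prop:pi.open}, \ref{prop:Pi.open} and \ref{prop:image.reguliere}, enter), and then exhibits the single isolating neighbourhood $(\mathring{W}+w)\times B(0,\delta )$ with $\delta$ a uniform-discreteness radius of $\underline{\Lambda }-F$: any $(\gamma ^*,\gamma )\in \Sigma$ in this neighbourhood forces $\Lambda -\gamma \in \Xi +F$, i.e.\ $\gamma \in \underline{\Lambda }-F$, hence $\gamma =0$. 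You never touch the window: you take an arbitrary compact identity neighbourhood $V$, obtain finiteness of the candidate $\gamma$'s from the combinatoric separation of the translates $\Lambda _0+\gamma$ inside the compact set $(\Pi ^\Gamma )^{-1}(V)$, dispose of the exact-kernel case $\gamma ^*=\mathfrak{o}$ via the strong regional proximality characterization (Proposition \ref{prop:regional.proximality} plus Lemma \ref{lem:differences}), and finish by Hausdorff separation of $\mathfrak{o}$ from the finitely many remaining $\gamma _j^*$. What your version buys is that discreteness of $\Sigma$ becomes logically independent of the topological regularity of the $W_i$ and of the residual set of openness points, and it isolates cleanly the only genuine danger (small nonzero $\gamma$ with $\gamma ^*$ equal or close to $\mathfrak{o}$); what the paper's version buys is a one-shot explicit neighbourhood that comes essentially for free once the window machinery, needed anyway for Theorem \ref{theo:construction.CPS}, is in place. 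Two small points to tighten in yours: the normalization ``we may assume $0\in \underline{\Lambda }_0$'' deserves a word, since translating the basepoint changes the group structure of $\mathbf{X}_{eq}$ and hence replaces $\Sigma$ by its image under a translation homeomorphism of $H\times \mathbb{R}^d$ (so discreteness is indeed unaffected); alternatively you can avoid the issue entirely by running the same argument with an arbitrary $\Lambda \in \Xi$ in place of $\Lambda _0$, using $\Pi ^\Gamma (\Lambda +\gamma )=\Pi ^\Gamma (\Lambda )+\gamma ^*$. Also, your compactness claim for $(\Pi ^\Gamma )^{-1}(V)$ implicitly uses that $\Xi ^\Gamma$ is saturated over $H$, so that this preimage coincides with $\Pi ^{-1}(V)$; this is exactly how the paper justifies that $\Pi ^\Gamma$ inherits properness from $\Pi$, so it is available to you, but it is worth making explicit.
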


\vspace{0.2cm}
\begin{proof}First observe that the $\Gamma$-translates of the canonical transversal $\Xi$ forms an open cover of the subsystem $\Xi ^\Gamma$. Hence the preimage $(\Pi ^\Gamma )^{-1}(-W)$, with $-W$ a compact subset of $H$ and $\Pi ^\Gamma$ being proper, is covered by a finite number of translates of $\Xi$, say $\Xi + F$ where $F\subset \Gamma$ is finite. Let now $\Lambda \in \Xi ^{\Gamma , 0}$ be chosen, which can be selected in the open subset $ \Xi \subset \Xi ^\Gamma$ and for which we denote $w:=\Pi ^\Gamma (\Lambda )\in H^0\subseteq H$ for simplicity. Since $\Lambda $ is a Delone set anf $F$ is finite the difference subset $\underline{\Lambda } -F$ of $\mathbb{R}^d$ admits a radius of uniform discreteness $\delta >0$, and on the other hand $w\in - \mathring{W}$, so that we can form the open neighborhood of $(0,0)$ in $H\times \mathbb{R}^d$ given by $(\mathring{W}+w)\times B(0,\delta )$.

\vspace{0.2cm}
Now suppose that there is another $(\gamma ^*, \gamma)\in \Sigma$ in $(\mathring{W}+w)\times B(0,\delta )$: Then $\gamma ^* \in \mathring{W}+w$ and thus $w-\gamma ^*$ lies into $-W$, yielding that $\Lambda -\gamma $ lies into $ (\Pi ^\Gamma )^{-1}(-W)$. Thus there is some $f\in F$ such that $\Lambda -\gamma \in \Xi +f$, so that $\Lambda -(\gamma +f) \in \Xi$, which means that $\gamma +f$ lies into the support of $\Lambda$. This in turns means that $\gamma$ remains into $\underline{\Lambda } -F$, and from the choice of $\delta$ we must get $\gamma =0$, so that $(\gamma ^*, \gamma)=(0,0)$. This shows that $(0,0)$ is isolated in $\Sigma$, which is consequently discrete in $H \times \mathbb{R}^d$.
\end{proof}

\vspace{0.1cm}
\section{Embedding of repetitive Meyer multiple sets into model multiple sets}

We shall now consider the family of repetitive model multiple sets arising from the cut $\&$ project scheme $(H , \Sigma,\mathbb{R}^d)$ and window $\left\lbrace W_i \right\rbrace _{i\in I}$ given in theorem \ref{theo:construction.CPS}, as well as the connection with the point pattern $\Lambda _0$ we started with. Let us denote $\mathbb{X}_{MS}$ for the hull of repetitive model multiple sets coming from the data provided by theorem \ref{theo:construction.CPS}, which we will refer $\mathbb{X}_{MS}$ as \textit{the hull of repetitive model multiple sets associated with} $\Lambda _0$. The window $\left\lbrace W_i \right\rbrace _{i\in I}$ in $H$ which is provided in theorem \ref{theo:construction.CPS} is not necessarily irredundent, that is, the compact subgroup 
\begin{align*} \mathcal{R}:= \lbrace w\in H \, \vert \, W_i+w=W_i \, \forall \, i\in I \rbrace
\end{align*}
of $H$ may not be trivial. This unable us to parameterize the hull $\mathbb{X}_{MS}$ by the compact Abelian group $H \times _{\Sigma }\mathbb{R}^d$, but as discussed in section $1$ one equally obtains the same hull $\mathbb{X}_{MS}$ of point patterns by considering the cut $\&$ project scheme where the internal space arise as the quotient group $H_\mathcal{R}$ of $H$ by $\mathcal{R}$. In this way the window become irredundant, and by an application of theorem \ref{theo:parametrization.map} one has a parameterization map
\begin{align*}
\begin{psmatrix}[colsep=1.5cm,
rowsep=0cm]
\pi _{MS}: \mathbb{X}_{MS}\; \; & \; \;  H_\mathcal{R}\times _{\Sigma } \mathbb{R}^d 
\psset{arrows=->>,linewidth=0.2pt, labelsep=1.5pt
,nodesep=0pt}
\ncline{1,1}{1,2}
\end{psmatrix}
\end{align*}
%which satisfies $\pi _{MS}(\Delta )= [w_\Delta , t_\Delta ]_{\Sigma }$ if and only if there are inclusions
%\begin{align*}
%\mathfrak{P} (\mathring{[W_i]}_\mathcal{R}+w_\Delta  )-t_\Delta \subseteq \Delta_i \subseteq \mathfrak{P} ([W_i]_\mathcal{R}+w_\Delta )-t_\Delta
%\end{align*}
which is injective on the subset of non-singular point patterns of $\mathbb{X}_{MS}$.

\vspace{0.2cm}
\begin{theo}\label{theo:inclusion.in.model.set} Let $\Lambda _0$ be a repetitive Meyer multiple sets of $\mathbb{R}^d$ with hull $\mathbb{X}_{\Lambda _0}$, and let $\mathbb{X}_{MS}$ be its associated hull of repetitive model multiple sets. Then

\vspace{0.2cm}
\begin{flushleft}
$(i)$ Any point pattern $\Lambda$ of $ \mathbb{X}_{\Lambda _0}$ admits a $\Delta$ in $\mathbb{X}_{MS}$ such that $\Lambda _i\subseteq \Delta _i $ for each $ i\in I$.

\vspace{0.2cm}
$(ii)$ For residually many model multiple set $\Delta $ of $\mathbb{X}_{MS}$ there exist a strong regional proximality class $ C_\Delta $ included in the saturated subset $\mathbb{X}_{\Lambda _0}^0 \subseteq \mathbb{X}_{\Lambda _0}$ such that
\begin{align*} \Delta _i= \bigcup_{\Lambda \in C_\Delta } \Lambda _i
\end{align*}
$(iii)$ The group of topological eigenvalues $\mathcal{E}(\mathbb{X}_{MS},\mathbb{R}^d)$ is a subgroup of $\mathcal{E}(\mathbb{X}_{\Lambda _0},\mathbb{R}^d)$ and the quotient of the latter by the former is given by
\begin{align*} \widehat{\mathcal{R}}= \mathcal{E}(\mathbb{X}_{\Lambda _0},\mathbb{R}^d)\diagup \mathcal{E}(\mathbb{X}_{MS},\mathbb{R}^d)
\end{align*}
\end{flushleft}
\end{theo}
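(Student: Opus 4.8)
The plan is to exploit the cut $\&$ project scheme and window constructed in Theorem \ref{theo:construction.CPS}, together with the two parametrization maps already at our disposal: the map $\Pi^\Gamma : \Xi^\Gamma \twoheadrightarrow H$ coming from the combinatoric hull and the model-set parametrization $\pi_{MS}: \mathbb{X}_{MS} \twoheadrightarrow H_\mathcal{R} \times_\Sigma \mathbb{R}^d$. The strategy for $(i)$ is constructive: given $\Lambda \in \mathbb{X}_{\Lambda_0}$, translate it so that the origin lies in its support (placing it in $\Xi^\Gamma$), set $w := \Pi^\Gamma(\Lambda) \in H$, and define $\Delta_i := \mathfrak{P}(W_i + w)$. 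The containment $\Lambda_i \subseteq \Delta_i$ should follow by showing that each $\gamma \in \Lambda_i$ satisfies $\gamma^* \in W_i + w$: indeed $\Lambda - \gamma \in \Xi_i$ by definition, so $\Pi^\Gamma(\Lambda - \gamma) = w - \gamma^* \in -W_i$, giving $\gamma^* \in W_i + w$. I would then verify $\Delta$ is a genuine inter-model multiple set, hence lies in $\mathbb{X}_{(H,\Sigma,\mathbb{R}^d),\{W_i\}}$, and invoke Theorem \ref{theo:théoreme.élémentaire.CP} to push it to a repetitive representative in $\mathbb{X}_{MS}$.

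For $(ii)$, the idea is to work over the residual set $H^0 \subseteq H$ of Proposition \ref{prop:image.reguliere}, where $\Pi^\Gamma$ is open and $NS^H$-membership is automatic. For $\Delta \in \mathbb{X}_{MS}$ non-singular with parameter $w \in NS^{H_\mathcal{R}}$ lifting to $H^0$, the fiber $C_\Delta := (\Pi^\Gamma)^{-1}(w)$ is precisely one strong-regional-proximality class by Proposition \ref{prop:regional.proximality}, and it sits inside $\mathbb{X}_{\Lambda_0}^0$ since $H^0 \subseteq NS^H$. The content is the equality $\Delta_i = \bigcup_{\Lambda \in C_\Delta} \Lambda_i$. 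The inclusion $\supseteq$ is part $(i)$ applied fiberwise. For $\subseteq$, given $\gamma \in \Delta_i = \mathfrak{P}(W_i + w)$, one has $\gamma^* \in W_i + w$, so $w - \gamma^* \in -W_i = \Pi^\Gamma(\Xi_i)$, producing some $\Lambda' \in \Xi_i$ with $\Pi^\Gamma(\Lambda') = w - \gamma^*$; then $\Lambda := \Lambda' + \gamma$ lies in $C_\Delta$ and has $\gamma \in \Lambda_i$. Openness at $w$ and the genericity condition $w \in H^0$ are what guarantee the fiber elements are themselves in $\Xi^{\Gamma,0}$, closing the argument that the union is over a single proximality class.

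For $(iii)$, I would argue via Proposition \ref{prop:eigenvalue} and the duality $\widehat{\mathbb{X}_{eq}} \simeq \mathcal{E}(\mathbb{X}_{\Lambda_0},\mathbb{R}^d)$. By Corollary \ref{cor:model.set.almost.aut} the maximal equicontinuous factor of $(\mathbb{X}_{MS},\mathbb{R}^d)$ is $H_\mathcal{R} \times_\Sigma \mathbb{R}^d$, so $\mathcal{E}(\mathbb{X}_{MS},\mathbb{R}^d) \simeq \widehat{H_\mathcal{R} \times_\Sigma \mathbb{R}^d}$, while Lemma \ref{lem:parametrization.cps} identifies $\mathbb{X}_{eq} \simeq H \times_\Sigma \mathbb{R}^d$. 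The surjection $H \twoheadrightarrow H_\mathcal{R}$ induces a surjection $H \times_\Sigma \mathbb{R}^d \twoheadrightarrow H_\mathcal{R} \times_\Sigma \mathbb{R}^d$ of compact Abelian groups with kernel $\mathcal{R}$, and dualizing turns this into an injection of dual groups, exhibiting $\mathcal{E}(\mathbb{X}_{MS},\mathbb{R}^d)$ as a subgroup of $\mathcal{E}(\mathbb{X}_{\Lambda_0},\mathbb{R}^d)$ with quotient dual to the kernel, i.e. $\widehat{\mathcal{R}}$ by Pontryagin duality for the exact sequence.

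The main obstacle I anticipate is in part $(ii)$: establishing that for \emph{residually many} $\Delta$ the fiber $C_\Delta$ is exactly one proximality class contained in $\mathbb{X}_{\Lambda_0}^0$, and that the set-theoretic equality of the $\Delta_i$ with the union holds with no stray points. This requires carefully tracking how the residual open set $H^0$ (where $\Pi^\Gamma$ is open) interacts with the non-singular locus $NS^{H_\mathcal{R}}$ under the quotient $H \to H_\mathcal{R}$, and confirming that genericity of $w$ forces the window inclusions to be equalities so that no boundary ambiguity spoils the fiberwise union. The openness of $\Pi^\Gamma$ at these points, rather than any measure-theoretic input, is the delicate ingredient that makes the combinatoric and model-set descriptions coincide.
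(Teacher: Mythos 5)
Your parts (ii) and (iii) are essentially the paper's own arguments: (ii) rests on the double inclusion over a parameter $w\in H^0$, using $-W_i=\Pi^\Gamma(\Xi _i)$ to produce, for each $\gamma$ with $\gamma ^*\in W_i+w$, a pattern $\Lambda = \Lambda '+\gamma$ in the fiber with $\gamma \in \Lambda _i$, together with the bookkeeping of residual sets through the open quotient $H\to H_\mathcal{R}$; (iii) is exactly the dualization of the exact sequence $0\to \mathcal{R}\to H\times _\Sigma \mathbb{R}^d \to H_\mathcal{R}\times _{\Sigma }\mathbb{R}^d\to 0$. The genuine gap is in part (i), in the passage from generic to arbitrary $\Lambda$. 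Your computation $\Lambda _i\subseteq \mathfrak{P}(W_i+w)$ with $w=\Pi ^\Gamma (\Lambda )$ is valid for every $\Lambda \in \Xi ^\Gamma$, but the multiple set $\Delta :=\left( \mathfrak{P}(W_i+w)\right) _{i\in I}$ is only an element of the full family $\mathbb{X}_{(H,\Sigma ,\mathbb{R}^d),\left\lbrace W_i\right\rbrace _{i\in I}}$: when $w$ is singular, i.e. $w\notin NS^H$, this $\Delta$ need not be repetitive and hence need not lie in $\mathbb{X}_{MS}$ (recall that an inter-model multiple set belongs to $\mathbb{X}_{MS}$ if and only if it is repetitive), and nothing in the elementary theory provides a way to ``push it to a repetitive representative'' while preserving the containment of $\Lambda$. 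This is not cosmetic: over a singular parameter $w$ the elements of $\mathbb{X}_{MS}$ are only those inter-model sets arising as limits of non-singular ones, each satisfying $\mathfrak{P}(\mathring{W}_i+w)\subseteq \Delta '_i\subseteq \mathfrak{P}(W_i+w)$ with in general strict upper inclusion, and no element of $\mathbb{X}_{MS}$ need contain the maximal choice $\mathfrak{P}(W_i+w)$ --- picture a planar window such that $\Gamma ^*$ meets two opposite edges of $W_i+w$; a limit of non-singular model sets can retain the points sitting on at most one of these edges. So for non-generic $\Lambda$ your construction yields a containing inter-model set, but not one in $\mathbb{X}_{MS}$, which is what the statement demands.

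The paper closes precisely this gap with a limiting argument that your proposal lacks. It first proves (i) only for $\Lambda '\in \Xi ^{\Gamma ,0}$, where $w_{\Lambda '}=\Pi ^\Gamma (\Lambda ')\in H^0\subseteq NS^H$ guarantees that $\Delta ':=\left( \mathfrak{P}(W_i+w_{\Lambda '})\right) _{i\in I}$ is non-singular, hence repetitive and in $\mathbb{X}_{MS}$; this coincides with your first step. Then, for an arbitrary $\Lambda \in \mathbb{X}_{\Lambda _0}$, it invokes minimality of $(\mathbf{X}_{\Lambda _0},\mathbb{R}^d)$ to choose vectors $t_n$ with $\Lambda '-t_n\to \Lambda$ in the combinatoric topology; every $\gamma \in \Lambda _i$ then lies in $\Lambda '_i-t_n\subseteq \Delta '_i-t_n$ for $n$ large enough, so any accumulation point of the sequence $(\Delta '-t_n)_n$ in the compact $\mathbb{R}^d$-invariant space $\mathbb{X}_{MS}$ contains $\Lambda$ symbol-wise. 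Some such approximation step, exploiting compactness of $\mathbb{X}_{MS}$ and the persistence of symbol-wise containment under limits, must be added to your proof of (i). Note that your part (ii) is not contaminated by this gap, since there you apply (i) only at fibers over $H^0$, exactly where your direct argument suffices.
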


\vspace{0.1cm}
\begin{proof} Consider first a point pattern $\Lambda \in  \Xi ^{\Gamma ,0}$. For such a $\Lambda$, the element $w_\Lambda := \Pi ^\Gamma (\Lambda )$ lies in $H^0$, in turns included into $NS^H:= H\backslash \left[ \Gamma ^*- \bigcup _{i\in I}\partial W_i \right]$ by proposition \ref{prop:image.reguliere}. Thus $w_\Lambda$ is a non-singular position in $H$ and gives rise to a unique model multiple set in $\mathbb{X}_{MS}$, given for each $i\in I$ by
\begin{align*}
\mathfrak{P}(\mathring{W}_i+w_\Lambda)= \mathfrak{P}(W_i+w_\Lambda)= \left\lbrace \gamma \in \Gamma \, \vert \, \gamma ^*\in W_i+w_\Lambda \right\rbrace 
\end{align*}
Now any $\gamma \in \Lambda _i$ yields $\Lambda -\gamma \in \Xi _i$, so that $w_\Lambda -\gamma ^*\in -W_i$ and consequently $\gamma ^* \in W_i+ w_\Lambda$. This means that $\Lambda _i\subseteq \mathfrak{P}(W_i+w_\Lambda)$ for each $i\in I$, that is, for such $\Lambda $ the collection $\Psi (\Lambda )$ as defined in the statement contains $(\mathfrak{P}(W_i+w_\Lambda))_{i\in I}$ and is thus a non-empty family.

\vspace{0.2cm}
Let now $\Lambda $ be any point pattern in $\mathbb{X}_{\Lambda _0}$. By repetitivity of $\Lambda _0$ and thus by minimality of $(\mathbf{X}_{\Lambda _0}, \mathbb{R}^d)$ we can select some $\Lambda '\in \Xi ^{\Gamma ,0}$ as well as a sequence of vectors $t_n\in \mathbb{R}^d$ such that $\Lambda ' -t_n$ converges to $\Lambda $ with respect to the combinatoric topology. We showed that there exists some model multiple set $\Delta \in \mathbb{X}_{MS}$ containing $\Lambda '$: so if $\gamma $ is taken into $\Lambda _i$ then it lies within $\Lambda '_i -t_n$, and thus within $\Delta _i-t_n$ upon some rank, which shows that any accumulation point of $\Delta -t_n$ in the compact space $\mathbb{X}_{MS}$, which does exists, must contains $\Lambda $ index by index of $I$.

\vspace{0.2cm}
Let us rapidly prove $(iii)$: The parametrization map of the hull $\mathbb{X}_{MS}$ is over $H _{\mathcal{R}} \times _{\Sigma }\mathbb{R}^d$, for which there is the short exact sequence of compact Abelian groups
\begin{align*}\begin{psmatrix}[colsep=1.1cm,
rowsep=0cm]
0 \; \; & \; \; \mathcal{R} \; \;  & \; \; H \times _{\Sigma }\mathbb{R}^d \; \; & \; \; H _{\mathcal{R}} \times _{\Sigma }\mathbb{R}^d \; \; & \; \;  0
\psset{arrows=->,linewidth=0.4pt, labelsep=1pt
,nodesep=0pt}
\ncline{1,1}{1,2}
\ncline{1,2}{1,3}
\ncline{1,3}{1,4}
\ncline{1,4}{1,5}
\end{psmatrix}  
\end{align*}  
which dualizes in a short exact sequence of discrete groups
\begin{align*}\begin{psmatrix}[colsep=1.1cm,
rowsep=0cm]
0 \; \;  & \; \; \mathcal{E}(\mathbb{X}_{MS},\mathbb{R}^d) \; \;  & \; \;\mathcal{E}(\mathbb{X}_{\Lambda _0},\mathbb{R}^d) \; \; & \; \; \widehat{\mathcal{R}} \; \;  & \; \;  0
\psset{arrows=->,linewidth=0.4pt, labelsep=1pt
,nodesep=0pt}
\ncline{1,1}{1,2}
\ncline{1,2}{1,3}
\ncline{1,3}{1,4}
\ncline{1,4}{1,5}
\end{psmatrix}  
\end{align*} 

Now to prove $(ii)$ recall the existence of a dense residual subset $\mathbb{X}_{eq}^0$ such that $\pi$ is open at any point pattern over this subset. One has $\mathbb{X}_{eq} = H \times _{\Sigma }\mathbb{R}^d$ by theorem \ref{theo:construction.CPS} and the image of $\mathbb{X}_{eq}^0$ in $H _{\mathcal{R}} \times _{\Sigma }\mathbb{R}^d$ under the quotient map by $\mathcal{R}$, which is an open map, is then a dense residual subset. On the other hand the subset of non-singular positions in $H _{\mathcal{R}} \times _{\Sigma }\mathbb{R}^d$ is also dense residual, and thus intersect the former subset in a dense residual subset of $H _{\mathcal{R}} \times _{\Sigma }\mathbb{R}^d$. This corresponds to a dense residual sub-collection of non-singular model multiple sets of $\mathbb{X}_{MS}$. Pick up $\Delta $ one one these: its parameter $\pi _{MS} (\Delta ) = [[w]_\mathcal{R},t]_{\Sigma _\mathcal{R}}$ in $H_\mathcal{R} \times _{\Sigma _\mathcal{R}}\mathbb{R}^d$ is with $\mathcal{R}$-coset $[w]_\mathcal{R}$ included in $H^0$. Let $\Delta ':= \Delta +t$. Then this latter is given symbol-wise by
\begin{align*} \Delta '_i = \mathfrak{P}([W_i]_\mathcal{R}+[w]_\mathcal{R}) = \mathfrak{P}(W_i+w+\mathcal{R})= \mathfrak{P}(W_i+w)
\end{align*}
where $w$ is any representative of the coset $[w]_\mathcal{R}$ in $H^0\subseteq H$. Now the argument at the beginning of this proof shows that any $\Lambda \in \mathbb{X}_{\Lambda _0}$ with $\Pi ^\Gamma (\Lambda )= w $ is symbol-wise contained in this model multiple set. On the other hand, the condition $\gamma ^*\in W_i+w$ is equivalent to $w -\gamma ^* \in -W_i$, and since $-W_i = \Pi ^\Gamma (\Xi _i)$ there must exists a $\Lambda $ with $\Pi ^\Gamma (\Lambda )= w$ such that $\Lambda -\gamma \in \Xi _i$, that is, with $\gamma \in \Lambda _i$. From this we deduce that 
\begin{align*}\Delta '_i = \bigcup_{\Pi ^\Gamma(\Lambda )=[w, 0]_\Sigma} \Lambda _i \quad \text{and thus} \quad \Delta _i = \bigcup_{\pi(\Lambda )=[w, t]_\Sigma} \Lambda _i
\end{align*}
Now the set $\pi (\Lambda )=[w, t]_\Sigma$ is a strongly proximal class in $\mathbb{X}_{\Lambda _0}^0$, which finish the proof.

%Our aim now is to show that given $\Lambda \in \mathbb{X}^0_{\Lambda _0}$ the above non-singular model set is the unique one containing $\Lambda$. It was observed that such $\Lambda $ arise as some translate of a point pattern $\Lambda '\in \Xi ^{\Gamma , 0}$, and if some model multiple set contains $\Lambda '$ then some of its translates will contain $\Lambda$. Therefore to show the unicity of the model multiple set containing a $\Lambda \in \mathbb{X}^0_{\Lambda _0}$ it is sufficient to consider the case $\Lambda \in \Xi ^{\Gamma , 0}$. Denote $w_{\Lambda }:= \Pi ^\Gamma (\Lambda )$, a non-singular element in $H$. We will show that for such a point pattern
%\begin{align*} w_\Lambda +\mathcal{R} = \bigcap _{i\in I} \left[ \bigcap _{\gamma \in \Lambda _i} \gamma ^* -W_i\right] 
%\end{align*}

\end{proof}
\vspace{0.2cm}
The point $(ii)$ shows that point patterns in $\mathbb{X}_{MS}$ are independent of the choice of the group $\Gamma$ used to construct the cut $\&$ project scheme $(H,\Sigma ,\mathbb{R}^d)$ and window $\left\lbrace W_i \right\rbrace _{i\in I}$. Moreover, if one starts with a hull of repetitive model multiple sets then the resulting hull $\mathbb{X}_{MS}$ is exactly the original one.
\vspace{0.3cm}
\section{Characterization of model multiple sets by almost automorphic dynamical systems}

We apply here the result of theorem \ref{theo:inclusion.in.model.set} in the particular case of a hull with almost automorphic $\mathbb{R}^d$-action. We show in this context that the hull $\mathbb{X}_{\Lambda _0}$ precisely arise as the hull of repetitive model multiple sets formed from the cut $\&$ project scheme and window given in theorem \ref{theo:construction.CPS}. It leads to the following dynamical characterization of repetitive model multiple sets:
\vspace{0.2cm}
\begin{theo}\label{theo:principal} Let $\Lambda _0$ be a repetitive Meyer multiple set of $\mathbb{R}^d$. Then $\Lambda$ is a model multiple set if and only if the dynamical system $(\mathbb{X}_{\Lambda _0}, \mathbb{R}^d)$ is almost automorphic.
\end{theo}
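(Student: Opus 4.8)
The plan is to deduce the statement from the embedding result of Theorem \ref{theo:inclusion.in.model.set} together with Corollary \ref{cor:model.set.almost.aut}, by showing that almost automorphicity of $(\mathbb{X}_{\Lambda _0},\mathbb{R}^d)$ is exactly what forces the two hulls $\mathbb{X}_{\Lambda _0}$ and $\mathbb{X}_{MS}$ to coincide as collections of point patterns. The forward implication is immediate: if $\Lambda _0$ is a model multiple set then, being repetitive, its hull is a hull of repetitive model multiple sets, so Corollary \ref{cor:model.set.almost.aut} gives that $(\mathbb{X}_{\Lambda _0},\mathbb{R}^d)$ is almost automorphic. All the work lies in the converse.

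So I would assume $(\mathbb{X}_{\Lambda _0},\mathbb{R}^d)$ almost automorphic, form the cut $\&$ project scheme $(H,\Sigma ,\mathbb{R}^d)$ and window $\left\lbrace W_i\right\rbrace _{i\in I}$ of Theorem \ref{theo:construction.CPS}, with hull $\mathbb{X}_{MS}$ and parametrization $\pi _{MS}:\mathbb{X}_{MS}\twoheadrightarrow H_{\mathcal{R}}\times _{\Sigma }\mathbb{R}^d$. First I would reduce the goal to the single inclusion $\mathbb{X}_{MS}\subseteq \mathbb{X}_{\Lambda _0}$: the space $\mathbb{X}_{\Lambda _0}$ is compact and $\mathbb{R}^d$-invariant in the ambient space of point patterns, hence a closed invariant subset of $\mathbb{X}_{MS}$ as soon as the inclusion holds, and minimality of $(\mathbb{X}_{\Lambda _0},\mathbb{R}^d)$ then yields $\mathbb{X}_{MS}=\mathbb{X}_{\Lambda _0}$, so that $\Lambda _0\in \mathbb{X}_{MS}$ is a repetitive model multiple set. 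Since $\mathbb{X}_{\Lambda _0}$ is closed, it suffices to exhibit a dense subset of $\mathbb{X}_{MS}$ whose members are genuine point patterns of $\mathbb{X}_{\Lambda _0}$.

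Such a dense subset is produced by combining almost automorphicity with Theorem \ref{theo:inclusion.in.model.set}$(ii)$. Recall that $\mathbb{X}_{eq}\simeq H\times _{\Sigma }\mathbb{R}^d$ and that, by \cite{Vee}, a minimal system is almost automorphic precisely when its set of one-point fibres is residual; thus the set $S_1\subseteq \mathbb{X}_{eq}$ of parameters over which $\pi$ is injective is residual. Intersecting $S_1$ with the (residual) set of parameters $[w,t]_{\Sigma }$ having $w\in H^0$, where $H^0$ is the residual image on which $\Pi ^\Gamma $ is open and $H^0\subseteq NS^H$ by Proposition \ref{prop:image.reguliere}, yields a residual, hence dense, set $\tilde P\subseteq \mathbb{X}_{eq}$. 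For each $[w,t]_{\Sigma }\in \tilde P$ the fibre $\pi ^{-1}([w,t]_{\Sigma })$ is a single pattern $\Lambda $, and the computation inside the proof of Theorem \ref{theo:inclusion.in.model.set}$(ii)$ gives $\Delta _i=\bigcup _{\pi (\Lambda ')=[w,t]_{\Sigma }}\Lambda '_i=\Lambda _i$ for the non-singular model set $\Delta $ of parameter $q([w,t]_{\Sigma })$, where $q:H\times _{\Sigma }\mathbb{R}^d\twoheadrightarrow H_{\mathcal{R}}\times _{\Sigma }\mathbb{R}^d$ is the quotient by $\mathcal{R}$; hence $\Delta =\Lambda \in \mathbb{X}_{\Lambda _0}$. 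Since $q(\tilde P)$ is dense in $H_{\mathcal{R}}\times _{\Sigma }\mathbb{R}^d$ and $\pi _{MS}$ is open on a residual subset of $\mathbb{X}_{MS}$ (Proposition \ref{prop:pi.open} applied to $\mathbb{X}_{MS}$), these patterns $\Delta $ are dense in $\mathbb{X}_{MS}$, which completes the inclusion.

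The main obstacle is the bookkeeping around the redundancy group $\mathcal{R}$: the factor map $\pi $ lives over $\mathbb{X}_{eq}=H\times _{\Sigma }\mathbb{R}^d$ whereas $\pi _{MS}$ lives over the smaller group $H_{\mathcal{R}}\times _{\Sigma }\mathbb{R}^d$, so the one-point-fibre locus $S_1$ and the residual set of Theorem \ref{theo:inclusion.in.model.set}$(ii)$ a priori sit in different spaces and must be matched through the open quotient $q$. This is precisely why I would argue with density, which is preserved by the continuous surjection $q$, rather than trying to transport residuality to $H_{\mathcal{R}}\times _{\Sigma }\mathbb{R}^d$. A secondary point needing care is that the Baire-category manipulations (stability of residual sets under the intersections and pullbacks used above) are clean only when $\Gamma $ is countable, which I would guarantee by taking $\Gamma =\langle \underline{\Lambda }_0\rangle $.
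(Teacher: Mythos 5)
Your proof is correct and follows essentially the same route as the paper: both combine Theorem \ref{theo:inclusion.in.model.set}$(ii)$ with Veech's structure theory of almost automorphic systems to produce a non-singular $\Delta \in \mathbb{X}_{MS}$ whose fiber class collapses to a single pattern $\Lambda \in \mathbb{X}_{\Lambda _0}$, and then conclude that the two hulls coincide by minimality. The paper phrases the collapse via Veech's identification of the openness locus with the injectivity locus and finishes with minimality of both hulls, whereas you intersect residual parameter sets in $\mathbb{X}_{eq}$ and finish with a density-plus-closedness argument; these are cosmetic variants of the same argument.
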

\vspace{0.2cm}
\begin{proof} It is proven in \cite{Vee} for an almost automorphic dynamical system the subset of points where the factor map $\pi$ onto its maximal equicontinuous factor is $1$-to-$1$ corresponds to the saturated subset $\mathbb{X}_{\Lambda _0}^0$ where $\pi$ is open. Let us pick a $\Delta \in \mathbb{X}_{MS}$ such that point $(ii)$ of theorem \ref{theo:inclusion.in.model.set} holds, that is, for which there is a strong proximality class $C_\Delta$ in $\mathbb{X}_{\Lambda _0}^0$ such that
\begin{align*} \Delta _i= \bigcup_{\Lambda \in C_\Delta } \Lambda _i
\end{align*}
As $\pi$ is injective on $\mathbb{X}_{\Lambda _0}^0$ the class $C_\Delta $ must consists of a single point pattern $\Lambda \in \mathbb{X}_{\Lambda _0}$, yielding $\Lambda = \Delta$. Minimality of $(\mathbb{X}_{\Lambda _0}, \mathbb{R}^d)$ and $(\mathbb{X}_{MS}, \mathbb{R}^d)$ ensure that $\mathbb{X}_{\Lambda _0}= \mathbb{X}_{MS}$, that is, $\Lambda _0$ is a repetitive model multiple set of $\mathbb{R}^d$.
\end{proof}

\vspace{0.2cm}

We wish to end this article with a comment on the Meyer property in the case of uncolared point patterns. In \cite{KeSa} Kellendonk and Sadun investigated whether if a point pattern with hull conjugated with the hull of a Meyer set is itself a Meyer set. It turned out to be false, but still they showed the following:

\vspace{0.2cm}
\begin{theo}\label{theo:deformations} \cite{KeSa} Let $\Lambda _0$ be a repetitive Delone set of finite local complexity of $\mathbb{R}^d$ with hull $\mathbb{X}_{\Lambda _0}$. Then the dynamical system $( \mathbb{X}_{\Lambda _0}, \mathbb{R}^d)$ admits $d$ independent topological eigenvalues in $\mathbb{R}^d$ if and only if for each $\varepsilon >0$ there exists a repetitive Meyer set $\Lambda _\varepsilon $ of $\mathbb{R}^d$ such that:

\vspace{0.2cm}
$(i)$ The dynamical systems $( \mathbb{X}_{\Lambda _0}, \mathbb{R}^d)$ and $( \mathbb{X}_{\Lambda _\varepsilon}, \mathbb{R}^d)$ are conjugated.

$(ii)$ The Hausdorff distance between $\Lambda _0$ and $\Lambda _\varepsilon$ is less than $\varepsilon$.
\end{theo}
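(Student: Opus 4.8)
This theorem is due to Kellendonk and Sadun \cite{KeSa}; I would reconstruct their argument, whose engine is a deformation of $\Lambda_0$ guided by its continuous eigenfunctions.

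\emph{The implication ``Meyer $\Rightarrow$ eigenvalues''} is the formal one. Topological eigenvalues are invariants of topological conjugacy, since an $\mathbb{R}^d$-equivariant homeomorphism pulls eigenfunctions back to eigenfunctions; hence $\mathcal{E}(\mathbb{X}_{\Lambda_0},\mathbb{R}^d)=\mathcal{E}(\mathbb{X}_{\Lambda_\varepsilon},\mathbb{R}^d)$ already for one fixed $\varepsilon$. It thus suffices to see that the repetitive Meyer set $\Lambda_\varepsilon$ possesses $d$ linearly independent topological eigenvalues. This is internal to the present paper: applying Theorem \ref{theo:inclusion.in.model.set}(iii) to $\Lambda_\varepsilon$ gives $\mathcal{E}(\mathbb{X}_{MS},\mathbb{R}^d)\subseteq \mathcal{E}(\mathbb{X}_{\Lambda_\varepsilon},\mathbb{R}^d)$, while by Corollary \ref{cor:model.set.almost.aut} the group $\mathcal{E}(\mathbb{X}_{MS},\mathbb{R}^d)=\widehat{H\times_\Gamma\mathbb{R}^d}$ injects into $\widehat{\mathbb{R}^d}=\mathbb{R}^d$ (because $\mathbb{R}^d$ is dense in $H\times_\Gamma\mathbb{R}^d$) with image spanning $\mathbb{R}^d$, the model set being of full rank. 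These $d$ independent eigenvalues then lie in $\mathcal{E}(\mathbb{X}_{\Lambda_0},\mathbb{R}^d)$.

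\emph{For the converse} I would set up the torus map and then straighten. Fix $d$ independent eigenvalues and, for an integer scale $N$, use $N\omega_1,\dots,N\omega_d$ (still eigenvalues, as $\mathcal{E}$ is a group) with continuous unit eigenfunctions $f_1,\dots,f_d$. They assemble into an equivariant map $\beta_N\colon \mathbb{X}_{\Lambda_0}\to\mathbb{T}^d$ with $\beta_N(x.t)=\beta_N(x)+N\Omega t \bmod\mathbb{Z}^d$, where $\Omega$ has rows $\omega_j$ and is invertible. Post-composing with $(N\Omega)^{-1}$ yields $g_N\colon\mathbb{X}_{\Lambda_0}\to\mathbb{R}^d/L_N$ with $L_N=\tfrac1N\Omega^{-1}\mathbb{Z}^d$ a genuine rank-$d$ lattice of covolume $O(N^{-d})$, satisfying the standard flow $g_N(x.t)=g_N(x)+t\bmod L_N$. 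Approximating the continuous $g_N$ by a transversally locally constant $\tilde g_N$ within $\eta$ (possible as the transversal is totally disconnected), I deform each $x$ by sending every $\gamma\in\underline{x}$ to $\gamma+\{\tilde g_N(x-\gamma)\}$, where $\{\cdot\}$ picks the representative in a fundamental cell of $L_N$ about $0$. The displacement is at most $\tfrac12\operatorname{diam}(L_N)+\eta=O(N^{-1})+\eta$, which is $<\varepsilon$ for $N$ large and $\eta$ small; and the deformed copy $\Lambda_\varepsilon$ of $\Lambda_0$ lies in $a+L_N+B(0,\eta)$ with $a=g_N(\Lambda_0)$. Being contained in a lattice plus a bounded set, its difference set is uniformly discrete, so $\Lambda_\varepsilon$ is Meyer (Definition \ref{defmeyer}), and it remains Delone because the move is small.

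\emph{The genuine difficulty} is to confirm that this deformation is a topological conjugacy rather than merely a homeomorphism of hulls. Because the rule is transversally locally constant, $\Lambda_\varepsilon$ is locally derivable from $\Lambda_0$; for $\varepsilon$ below the finite-local-complexity scale the perturbed patches of $\Lambda_\varepsilon$ still determine their combinatorial type, so $\gamma=\tilde\gamma-\{\tilde g_N(\,\cdot\,)\}$ can be recovered locally, giving mutual local derivability and hence the desired conjugacy $(\mathbb{X}_{\Lambda_0},\mathbb{R}^d)\simeq(\mathbb{X}_{\Lambda_\varepsilon},\mathbb{R}^d)$. Controlling this step is where $d$-independence is indispensable — only then is $L_N$ of full rank, so that straightening pins every point to within $O(N^{-1})$ of the lattice in all directions — and it is the point at which one must rule out collisions and verify invertibility of the local rule; this is precisely the content carried by the deformation calculus underlying \cite{KeSa}, which I would invoke to finish.
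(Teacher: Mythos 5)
First, a point about the comparison itself: the paper contains no proof of this statement. Theorem \ref{theo:deformations} is imported verbatim from \cite{KeSa}, and Section 7's only original content is the deduction of Theorem \ref{theo:deformations2} from it. So your reconstruction can only be judged against Kellendonk--Sadun's own argument, and in outline it is faithful to it. The easy direction is essentially right: eigenvalues are conjugacy invariants, and a repetitive Meyer set has $d$ independent ones. Your justification of the latter (``the model set being of full rank'') is the one vague spot: the clean argument is that $\mathcal{E}(\mathbb{X}_{\Lambda _\varepsilon},\mathbb{R}^d)=\widehat{H\times _\Sigma \mathbb{R}^d}$ by Theorem \ref{theo:construction.CPS}, and if every eigenvalue were orthogonal to some $v\neq 0$ then every character of $H\times _\Sigma \mathbb{R}^d$ would be trivial on the one-parameter subgroup $\{[0,sv]_\Sigma \}_{s\in \mathbb{R}}$, forcing $(0,sv)\in \Sigma$ for all $s$ and contradicting the discreteness of $\Sigma$ (Lemma \ref{lem:discret.dense}); alternatively invoke \cite{BaKe}, Lemma 2.22, as the paper does after the theorem. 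In the hard direction your construction (rescale the eigenvalues by $N$, form $g_N$ with values in $\mathbb{R}^d/L_N$, approximate by a transversally locally constant $\tilde g_N$, displace each point into the coset determined by $g_N$) is the correct one, modulo two slips: with the paper's convention $\Lambda .t=\Lambda -t$ the displacement must be $-\{\tilde g_N(\,\cdot \,)\}$ rather than $+\{\tilde g_N(\,\cdot \,)\}$ (your sign is consistent only with the opposite action convention), and ``contained in a lattice plus a bounded set'' does not imply Meyer ($\mathbb{Z}+[0,1]=\mathbb{R}$); you need $2\eta$ smaller than half the shortest vector of $L_N$, which your order of choices ($N$ first, then $\eta$) permits but which must be said.

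The genuine shortfall is the step you yourself single out: the conjugacy. As written you ``invoke the deformation calculus underlying \cite{KeSa}'' to rule out collisions, but the statement being proved is precisely the main output of that calculus, so a blind proof cannot end there; fortunately the FLC-gap argument you gesture at closes it in a few lines. By finite local complexity there are finitely many patches of radius $2R_s$ up to translation ($R_s$ the range of $\tilde g_N$), so there is $\delta >0$ such that any two distinct centered patches are at Hausdorff distance at least $\delta$ (compare on a slightly smaller ball to absorb boundary effects). Choose $N$ and $\eta$ so that the total displacement $\varepsilon '$ satisfies $4\varepsilon '<\min (r_0,\delta )$. If two deformed patches around deformed points $\tilde{\gamma }_1,\tilde{\gamma }_2$ coincide, then the original centered patches around $\gamma _1,\gamma _2$ are within $4\varepsilon '<\delta$ of each other, hence equal; equality of these patches determines the displacement $s(\gamma )$, hence recovers $\gamma =\tilde{\gamma }-s(\gamma )$. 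This is an inverse local derivation rule of finite range, so both derivations induce continuous $\mathbb{R}^d$-equivariant maps between the hulls which are mutually inverse on the dense orbit of $\Lambda _0$, hence everywhere: a topological conjugacy, with no appeal to \cite{KeSa}. With that paragraph in place of the citation, and the two slips above repaired, your proposal is a complete proof of the quoted theorem.
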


\vspace{0.2cm}
If a repetitive Delone sets with finite local complexity admits an almost automorphic system $( \mathbb{X}_{\Lambda _0}, \mathbb{R}^d)$ then it admits $d$ independent topological eigenvalues in $\mathbb{R}^d$: For, the $\mathbb{R}^d$-action on $\mathbb{X}_{\Lambda _0}$ is locally free by uniform discreteness of any point pattern of this hull, and thus must be locally free on the maximal equicontinuous factor in case of almost automorphy. From \cite{BaKe} (lemma $2.22$ there) one has by dualisation an injective morphism
\begin{align*}\begin{psmatrix}[colsep=1.1cm,
rowsep=0cm]
 \mathcal{E}(\mathbb{X}_{\Lambda _0},\mathbb{R}^d)= \widehat{\mathbb{X}_{eq}} \; \;  & \; \;\widehat{\mathbb{R}^d} = \mathbb{R}^d 
\psset{arrows=->,linewidth=0.4pt, labelsep=1pt
,nodesep=0pt}
\ncline{1,1}{1,2}
\end{psmatrix}  
\end{align*} 
with relatively dense range in $\mathbb{R}^d$. Thus $\mathcal{E}(\mathbb{X}_{\Lambda _0},\mathbb{R}^d)$ as a relatively dense subgroup of $\mathbb{R}^d$ contains $d$ linearly independent eigenfunctions. Consequently we can provide a statement for repetitive Delone sets with finite local complexity:

\vspace{0.2cm}
\begin{theo}\label{theo:deformations2} Let $\Lambda _0$ be a repetitive Delone set of finite local complexity of $\mathbb{R}^d$ with hull $\mathbb{X}_{\Lambda _0}$. Then the dynamical system $( \mathbb{X}_{\Lambda _0}, \mathbb{R}^d)$ is almost automorphic if and only if for each $\varepsilon >0$ there exists a repetitive model set $\Lambda _\varepsilon $ of $\mathbb{R}^d$ such that:

\vspace{0.2cm}
$(i)$ the dynamical systems $( \mathbb{X}_{\Lambda _0}, \mathbb{R}^d)$ and $( \mathbb{X}_{\Lambda _\varepsilon}, \mathbb{R}^d)$ are conjugated.

$(ii)$ The Hausdorff distance between $\Lambda _0$ and $\Lambda _\varepsilon$ is less than $\varepsilon$.
\end{theo}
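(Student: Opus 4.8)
The plan is to obtain this statement as a synthesis of the characterization of Theorem \ref{theo:principal} with the deformation result of Kellendonk and Sadun (Theorem \ref{theo:deformations}), using throughout that almost automorphy is a conjugacy invariant. The first step I would record is this invariance: almost automorphy is defined (Definition \ref{def:al.aut}) purely through the factor map $\pi$ onto the maximal equicontinuous factor and the existence of a one-point fiber. Since the maximal equicontinuous factor is the universal equicontinuous quotient (Theorem \ref{theoremfactor}), a topological conjugacy $\phi : \mathbb{X}_{\Lambda _0} \to \mathbb{X}_{\Lambda _\varepsilon}$ between minimal systems descends to a conjugacy $\phi _{eq}$ of the two maximal equicontinuous factors with $\pi _{\Lambda _\varepsilon}\circ \phi = \phi _{eq}\circ \pi _{\Lambda _0}$, so a one-point fiber of one factor map is carried to a one-point fiber of the other. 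Hence almost automorphy passes freely along condition $(i)$.

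For the forward implication I would assume $(\mathbb{X}_{\Lambda _0},\mathbb{R}^d)$ almost automorphic. The paragraph preceding the statement already shows that such a system admits $d$ linearly independent topological eigenvalues in $\mathbb{R}^d$, which is exactly the hypothesis of Theorem \ref{theo:deformations}. Applying that theorem yields, for each $\varepsilon >0$, a repetitive Meyer set $\Lambda _\varepsilon$ with $(\mathbb{X}_{\Lambda _\varepsilon},\mathbb{R}^d)$ conjugate to $(\mathbb{X}_{\Lambda _0},\mathbb{R}^d)$ and with Hausdorff distance to $\Lambda _0$ below $\varepsilon$. By the invariance just noted, $(\mathbb{X}_{\Lambda _\varepsilon},\mathbb{R}^d)$ is again almost automorphic, so Theorem \ref{theo:principal} upgrades the repetitive Meyer set $\Lambda _\varepsilon$ to a repetitive model set. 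This delivers conditions $(i)$ and $(ii)$ simultaneously.

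For the converse it suffices to invoke the hypothesis at a single value, say $\varepsilon =1$, producing one repetitive model set $\Lambda _\varepsilon$ whose hull is conjugate to $\mathbb{X}_{\Lambda _0}$. Being a repetitive model set, $(\mathbb{X}_{\Lambda _\varepsilon},\mathbb{R}^d)$ is almost automorphic by Corollary \ref{cor:model.set.almost.aut} (equivalently by the easy direction of Theorem \ref{theo:principal}), and condition $(i)$ transports this back to $(\mathbb{X}_{\Lambda _0},\mathbb{R}^d)$, closing the argument.

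Because almost all of the substance is pre-packaged in Theorems \ref{theo:principal} and \ref{theo:deformations} together with the eigenvalue observation recorded just above the statement, I do not anticipate a genuine difficulty. The one point deserving care is precisely the conjugacy invariance of almost automorphy, namely verifying that the conjugacy of hulls intertwines the two canonical factor maps onto their maximal equicontinuous factors so that a one-point fiber is preserved; once this functoriality is in hand, the theorem reduces to a direct concatenation of the cited results in both directions.
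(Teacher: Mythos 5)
Your proposal is correct and is exactly the argument the paper intends: the paper states this theorem without an explicit proof, relying on the eigenvalue observation in the preceding paragraph, Theorem \ref{theo:deformations} of Kellendonk--Sadun, Theorem \ref{theo:principal}, and Corollary \ref{cor:model.set.almost.aut}, concatenated precisely as you do. Your additional verification that almost automorphy is preserved under conjugacy (via functoriality of the maximal equicontinuous factor) fills in the one detail the paper leaves tacit, and it is carried out correctly.
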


\section*{Acknowledgments}
I wish to express my gratitude to my advisor Johannes Kellendonk for its important comments concerning this article. I also wish to thank Daniel Lenz for the valuable conversations we had on this topic.

\vspace{1cm}
\begin{minipage}{0.5\linewidth}
\begin{flushleft}
\textit{Jean-baptiste Aujogue\\
~ \\
Université de Lyon\\
CNRS UMR 5208\\
Université Lyon 1\\
Institut Camille Jordan\\
43 blvd. du 11 novembre 1918\\
F-69622 Villeurbanne cedex\\
France}

\end{flushleft}
\end{minipage}
\hfill
\begin{minipage}{0.5\linewidth}
\begin{flushleft}
\textit{Current Adress:\\
 ~ \\
Universidad de Santiago de Chile\\
Dep. de Matem\'aticas, Fac. de Ciencia\\
Aladema 3363, Estaci\'on Central\\
Santiago\\
Chile}

\end{flushleft}
\end{minipage}


\begin{thebibliography}{99}

\bibitem{Au} J.Auslander, \textit{Minimal Flows and Their Extensions}, Notas de Matemàtica no 153, Elsevier Science, (1988). http://books.google.fr/books?id=e3wFvPvpWvwC .

\bibitem{BaaLe} M. Baake, D. Lenz, \textit{Dynamical systems on translation bounded measures: pure point dynamical and diffraction spectra}, Ergodic Theory and Dynamical Systems, Volume 24, Issue 06, December 2004, pp 1867-1893.

\bibitem{BaaLeMo} M. Baake, D. Lenz, R.V. Moody, \textit{Characterization of model sets by dynamical systems}, Ergodic Theory and Dynamical Systems 27.02 (2007): 341-382.

\bibitem{BaaLeRi} M. Baake, D. Lenz, C. Richard, \textit{Pure point diffraction implies zero entropy for Delone sets with uniform cluster frequencies}, arxiv, 2007.

\bibitem{BaaHePl} M. Baake, J. Hermisson, P. A. B. Pleasant, \textit{The torus parametrization of quaisperiodic LI-class}, 1997.

\bibitem{BaaMo} M. Baake, R.V. Moody, \textit{Weighted Dirac combs with pure point diffraction}, arxiv, 2002.

\bibitem{BaaMoSch} M. Baake, R.V. Moody, M. Schlottmann, \textit{Limit-(quasi) periodic point sets as quasicrystals with p-adic internal spaces}, Journal of Physics A: Mathematical and General 31.27 (1998): 5755.

\bibitem{BeSiYi} A. Berger, S. Siegmund, Y. Yi, \textit{On almost automorphic dynamics in symbolic lattices}, Th. $\&$ Dynam. Sys,  vol. 24, p. 677-696.

\bibitem{BuKw} W. Bułatek, J. Kwiatkowski \textit{Strictly ergodic Toeplitz flows with positive entropies and trivial centralizers}, Studia Mathematica Volume: 103, Issue: 2, page 133-142, 1992.

\bibitem{Do} T. Downarowicz, \textit{Survey on odometers and Toeplitz flows}, Algebraic and topological dynamics, AMS, 7-37 (2005) DOI:262174.

\bibitem{Gl0} E. Glasner, \textit{On tame dynamical systems}, Colloq. Math. 105, pp 283-295, 2006.

\bibitem{Gl} E. Glasner, \textit{Enveloping semigroups in topological dynamics},  Topology and its Applications, Volume 154, Issue 11, 1 June 2007, Pages 2344–2363.

\bibitem{Gl2} E. Glasner, \textit{The structure of tame dynamical systems}, Ergodic Theory and Dynamical Systems, vol.27 no 6, 2007, pp 1819-1837.

\bibitem{GlMe} E. Glasner, M. Megrelishvili, \textit{Hereditarily non-sensitive dynamical systems and linear representations}, arXiv preprint math/0406192, 2004.

\bibitem{GlMeUs} E. Glasner, M. Megrelishvili, V. Uspenskij, \textit{On metrizable enveloping semigroups},  arXiv:math/0606373v3, 2006.


\bibitem{KeSa} J. Kellendonk, L. Sadun, \textit{Meyer sets, eigenvalues, and Cantor fiber bundles},	arXiv:1211.2250, nov. 2012.

\bibitem{La} J. Lagarias, \textit{Meyer's concept of quasicrystals and quasiregular sets}, Communications in Mathematical Physics, Volume 179, Issue 2 , pp 365-376.

\bibitem{LeeMo} J-Y. Lee, R.V. Moody, \textit{A characterization of model multi-colour sets}, Annales Henri Poincaré. Vol. 7. No. 1. Birkhäuser-Verlag (2006).

\bibitem{MaPa} N.G. Markley, M.E. Paul, \textit{Almost automorphic symbolic minimal sets without unique ergodicity}, Israel Journal of Mathematics, September 1979, Volume 34, Issue 3, pp 259-272.

\bibitem{Mo} R.V. Moody, \textit{Meyer sets and their duals}, NATO ASI Series C Mathematical and Physical Sciences-Advanced Study Institute, 1997, vol. 489, p. 403-442.

\bibitem{Mo2} R.V. Moody, \textit{Mathematical quasicrystals: a tale of two topologies}, Proceedings of the International Congress of Mathematical Physics (2003).

\bibitem{MoSt} R.V. Moody, N. Strungaru, \textit{Point sets and dynamical systems in the autocorrelation topology}, Canadian Mathematical Bulletin 47.1 (2004): 82-99.

\bibitem{Pa} M.E. Paul, \textit{Construction of almost automorphic minimal flows}, General Topology and its Applications, Volume 6, Issue 1, February 1976, Pages 45–56.

\bibitem{Sch} M. Schlottmann, \textit{Cut-and-project sets in locally compact Abelian groups}, Quasicrystals and Discrete Geometry 10 (1998): 247-264.

\bibitem{Sch2} M. Schlottmann, \textit{Generalized model sets and dynamical systems}, CRM Monograph Series (1999).

\bibitem{BaKe} M. Barge, J. Kellendonk, \textit{Proximality and pure point spectrum for tiling dynamical systems}, arXiv preprint arXiv:1108.4065 (2011).

\bibitem{Vee} W.A. Veech, \textit{Point distal flows}, American Journal of Mathematics, vol.92 no 1, 1970, pp. 205-242.

\end{thebibliography}
\end{document}